\newtheorem{theorem}{Theorem}[section]
\newtheorem{lemma}[theorem]{Lemma}
\newtheorem{corollary}[theorem]{Corollary}
\newcommand{\widebar}{\overline}
\begin{document}
\begin{frontmatter}

\title{Universality of covariance matrices}
\runtitle{Universality of covariance matrices}

\begin{aug}
\author[A]{\fnms{Natesh S.} \snm{Pillai}\corref{}\ead[label=e1]{pillai@stat.harvard.edu}\thanksref{t1}}
\and
\author[B]{\fnms{Jun} \snm{Yin}\ead[label=e2]{jyin@math.wisc.edu}\thanksref{t2}}
\runauthor{N. S. Pillai and J. Yin}
\affiliation{Harvard University and
University of Wisconsin--Madison}
\address[A]{Department of Statistics\\
Harvard University\\
1 Oxford Street\\
Cambridge, Massachusetts 02138\\
USA\\
\printead{e1}} 
\address[B]{Department of Mathematics\\
University of Wisconsin--Madison\\
480 Lincoln Dr.\\
Madison, Wisconsin 53706\\
USA\\
\printead{e2}}
\end{aug}
\thankstext{t1}{Supported by the NSF Grant DMS-11-07070.}
\thankstext{t2}{Supported by the NSF Grant DMS-10-01655.}

\received{\smonth{9} \syear{2012}} \revised{\smonth{4} \syear{2013}}

%
\begin{abstract}
In this paper we prove the universality of covariance matrices of the
form $H_{N \times N} = {X}^\dagger X$ where $X$ is an ${M \times N}$
rectangular matrix with independent real valued entries $x_{ij}$
satisfying $\mathbb{E}x_{ij} = 0$ and $\mathbb{E}x^2_{ij} = {1 \over
M}$, $N$, $M\to \infty$. Furthermore it is assumed that these entries
have sub-exponential tails or sufficiently high number of moments. We
will study the asymptotics in the regime $N/M = d_N \in(0,\infty),
\lim_{N\to\infty}d_N \neq0, \infty$. Our main result is the edge
universality of the sample covariance matrix at \emph{both} edges of
the spectrum. In the case $\lim_{N \to\infty} d_N=1$, we only focus on
the largest eigenvalue. Our proof is based on a~novel version of the
Green function comparison theorem for data matrices with dependent
entries. En route to proving edge universality, we establish that the
Stieltjes transform of the empirical eigenvalue distribution of $H$ is
given by the Marcenko--Pastur law uniformly up to the edges of the
spectrum with an error of order $ (N \eta)^{-1}$ where $\eta$ is the
imaginary part of the spectral parameter in the Stieltjes transform.
Combining these results with existing techniques we also show bulk
universality of covariance matrices. All our results hold for both real
and complex valued entries.
\end{abstract}

%
\begin{keyword}[class=AMS]
\kwd{15B52} \kwd{82B44}
\end{keyword}
\begin{keyword}
\kwd{Covariance matrix} \kwd{Marcenko--Pastur law} \kwd{universality}
\kwd{Tracy--Widom law} \kwd{Dyson Brownian motion}
\end{keyword}

\end{frontmatter}

\setcounter{footnote}{2}
\section{Introduction}\label{sec1}
In this paper we prove the universality of covariance matrices. Let $X
= (x_{ij}) $ be an $M \times N$ 
data matrix with independent centered real valued entries with variance
$ M^{-1} $,%
%
\begin{equation}
\label{eqnXmat} x_{ij} = M^{-1/2} q_{ij},\qquad
\mathbb{E} q_{ij} = 0,\qquad\mathbb{E}q_{ij}^2 =1.
\end{equation}
Furthermore, the entries $q_{ij}$ have a~sub-exponential decay, that
is, there exists a~constant $\vartheta>0$ such that for $u>1$,
%
%
\begin{equation}
\label{eqnXmatexpbd} \mathbb{P}\bigl(|q_{ij}| > u\bigr) \leq\vartheta^{-1}
\exp \bigl(-u^\vartheta \bigr).
\end{equation}
The covariance matrix corresponding to data matrix $X$ is given by $H =
{X}^\dagger X$. We will be working in the regime
\[
d=d_N=N/M, \qquad\lim_{ N\to\infty} d \neq{0, \infty}.
\]
Thus without loss of generality, we will assume henceforth that for
some small constant $\theta$, for all $N \in\mathbb{N}$,
\[
{ \theta< d_N<\theta^{-1}. } 
\]
All our constants may depend on $\theta$ and $\vartheta$, but we will
not denote this dependence. In this paper we focus on the case where
the matrix $X$ has real valued entries which is a~natural assumption
for applications in statistics, economics, etc. However all of the
results in this paper also hold for complex valued entries with the
moment condition (\ref{eqnXmat}) replaced with its complex valued
analogue,%
%
\begin{equation}
\label{eqnXmatcomp} x_{ij} = M^{-1/2} q_{ij}, \qquad
\mathbb{E} q_{ij} = 0, \qquad\mathbb{E}q_{ij}^2
= 0, \qquad\mathbb{E} |q_{ij}|^2 = 1.
\end{equation}
Furthermore, in some technical results in the present work, the
independence of matrix entries are weakened (see Theorem
\ref{thmlargedev}), which are the key inputs of \cite{BPZ} and
\cite{PillYin1102}.

Covariance matrices are fundamental objects in modern multivariate
statistics where the advance of technology has led to high-dimensional
data. They have manifold applications in various applied fields; see
\cite{Dieng06,John01,John07,John08} for an extensive account on
statistical applications, \cite{Hard08,Onat09} for applications in
economics and \cite{Pattetal06} in population genetics, to name a~few.
In the regime we study in this paper where $N,M$ are proportional to
each other, the exact asymptotic distribution of the eigenvalues is not
known, except for some cases under specific assumptions on the
distributions of the entries of the covariance matrix, for example,
when the entries are Gaussian. In this context, akin to the central
limit theorem, the phenomenon of universality helps us to obtain the
asymptotic distribution of the eigenvalues without having restrictive
assumptions on the distribution on the entries. Borrowing a~physical
analogy, as observed by Wigner, the eigenvalue gap distribution for
a~large complicated system is universal in the sense that it depends
only on the symmetry class of the physical system, but not on other
detailed structures.

A fundamental example is the well-studied Wishart matrix (the
covariance matrix obtained from a~data matrix $X$ consisting of i.i.d.
centered Gaussian random variables) for which one has closed form
expressions for many objects of interest including the joint
distribution of the eigenvalues.
In this paper we prove the universality of covariance matrices (both at
the bulk and at the edges) under the assumption that entries of the
corresponding data matrix are independent, have mean~$0$, variance $1$
and have a~sub-exponential tail decay. This implies that,
asymptotically, the distribution of the local statistics of eigenvalues
of the covariance matrices of the above kind are identical to those of
the Wishart matrix.\vadjust{\goodbreak}

Over the past two decades, great progress has been made in proving the
universality properties of i.i.d. matrix elements (\textit{standard
Wigner ensembles}). The most general results to date for the
universality of Wigner ensembles are obtained in Theorems 7.3 and 7.4
of \cite{EKYY12}, in which bulk (edge) universality is proved for
Wigner matrices under the assumption that entries have a~uniformly
bounded $4+\varepsilon$ ($12+\varepsilon$) moment for some
$\varepsilon> 0$, and then recently improved further by \cite{EYcom}\vadjust{\goodbreak}
and \cite{LY1}. The key ideas for the universality of Wigner ensembles
were developed through several important steps in
\cite{ESY4,EPRSY,EYYBulkuni,EYYgenwig,EYYrigid}. The ideas we use in
this paper are also adapted from the above cited papers. There are also
related results in \mbox{\cite{TV081,TV082}}. However, the results
regarding universality of \emph{local statistics} for covariance
matrices have been obtained only recently, which we survey below.

\subsection{Review of previous work}
First we review previous results for extreme eigenvalues. In
\cite{BYin93,BSYin,YinB}, the authors showed the almost sure
convergence of extreme eigenvalues. In \cite{GotzTikh04}, the authors
derived the rate of convergence of the spectrum to the
Marchenko--Pastur law. In \cite{Sosh99}, Soshnikov showed that for
$d_N=1-O(N^{-1/3})$, if $q_{ij}$ in (\ref{eqnXmat}) have a~symmetric
distribution and Gaussian decay, then the largest eigenvalues
(appropriately rescaled) converge to the Tracy--Widom distribution.
This condition on $d_N$ was replaced with $\lim_{N
\rightarrow\infty}{d_N }\in(0,\infty)$ by P\'ech\'e \cite{Pech07}.
Using similar assumptions as in \cite{Sosh99} and \cite{Pech07},
Feldheim and Sodin \cite{Sod1} showed that the smallest eigenvalues
(appropriately rescaled) converge to the Tracy--Widom distribution for
$\lim_{N \rightarrow\infty} {d_N}\neq1$. More recently, for $\lim_{N
\rightarrow\infty} {d_N}\neq1$, Wang \cite{Wang11} proved the
Tracy--Widom law for the limiting distribution of the extreme
eigenvalues under the assumption that $q_{ij}$ in (\ref{eqnXmat}) have
vanishing third moment and sufficiently high number of moments. For
``square'' matrices, that is, when $N=M$ and thus $d_N = 1$, Tao and Vu
\cite{TV10} proved the universality of the smallest eigenvalues
assuming the matrix entries have sufficiently high number of moments.
The limiting distribution of the smallest eigenvalue for square
matrices with standard Gaussian entries were computed by Edelman
\cite{edelman}. In our main result below, \textit{we show universality
of eigenvalues for ``rectangular'' data matrices at both edges of the
spectrum, assuming only} (\ref{eqnXmat}) \textit{and}
(\ref{eqnXmatexpbd}).

Now we review results for the local statistics of the eigenvalues in
the bulk of the spectrum. It was widely believed until recently that
the distribution of the distance between adjacent eigenvalues is
independent of the distribution of $q_{ij}$ in (\ref{eqnXmat}).
In~\cite{BenAPech05} Arous and P\'ech\'e showed this bulk universality
when $d_N=1+O(N^{-5/48})$. Tao and Vu \cite{TaoVu09} proved that the
asymptotic distribution for local statistics at the bulk corresponding
to two covariance matrices are identical, if the entries in these two
matrices have identical first four moments. On the other hand, in
\cite{Pech09} and~\cite{ESYY}, P\'ech\'e, Erd\'os, Schlein, Yau and the
second author of this paper showed this bulk universality under some
regularity conditions and decay assumptions on the distribution of
the\vadjust{\goodbreak}
matrix entries. \emph{We also show bulk universality but under weaker
assumptions than those in \cite{Pech09} and \cite{ESYY}}; see Remark
\ref{remremcompbul} for more details.

\subsection{Our key results}
Let $X^{\mathbf{v}} = [x^{\mathbf{v}} _{ij}]$ with independent entries
satisfying
(\ref{eqnXmat}) and (\ref{eqnXmatexpbd}), and let%
\[
\lambda^{\mathbf{v}}_1\geq\lambda^{\mathbf{v}}_2
\cdots\lambda^{\mathbf{v}}_{\min\{M,N\}}\geq0
\]
denote the nontrivial singular values of the data matrix $X^{\mathbf
{v}}$. Let $\mathbb{P}^{\mathbf{v}}$ denote the probability measure
according to which the entries of $X^{\mathbf{v}}$ are distributed.
Let~$X^{\mathbf{w}}$, $\{\lambda^{\mathbf{w}}_k\}_{k \leq\min\{M,N\}}$
and $\mathbb{P}^ {\mathbf{w}}$ be defined analogously. The following is
our main result:

%
%
\begin{theorem}[(Universality of extreme eigenvalues)] \label{twthm}
For $\lim_{N \rightarrow\infty} d_N \in(0, \infty)$, there is an
$\varepsilon>0$ and $\delta>0$ such that for any real number $s$ (which
may depend on $N$),
%
%
\begin{eqnarray}\label{tw}
&& \mathbb{P}^\mathbf{v} \bigl( N^{2/3} \bigl(
\lambda^{\mathbf{v}}_1 -\lambda_+ \bigr) \le s- N^{-\varepsilon}
\bigr)- N^{-\delta}\nonumber
\\
&&\qquad  \le \mathbb{P}^\mathbf{w} \bigl(
N^{2/3} \bigl( \lambda^{\mathbf{w}}_1 -\lambda_+ \bigr)
\le s \bigr)
\\
&&\qquad \le \mathbb{P}^\mathbf{v} \bigl( N^{2/3} \bigl( \lambda
^{\mathbf{v}}_1 -\lambda_+ \bigr) \le s+ N^{-\varepsilon} \bigr)+
N^{-\delta}\nonumber
\end{eqnarray}
for $N\ge N_0$ sufficiently large, where $N_0$ is independent of $s$.
An analogous result holds for the smallest eigenvalues
$\lambda^{\mathbf{v},\mathbf{w}}_{\min\{M,N\}}$, when $\lim_{N\to
\infty}d_N \in(0,\infty) \setminus\{
1\}$. 
\end{theorem}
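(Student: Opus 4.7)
The plan is to follow the three-step strategy that has become standard for edge universality in random matrix theory, suitably adapted to the covariance matrix $H = \tp X X$. First, I would prove a strong local Marchenko-Pastur law for $m_N(z) \deq N^{-1}\tr (H-z)^{-1}$ uniformly up to the spectral edge $\lambda_+$. Second, I would convert this estimate into edge rigidity at the scale $N^{-2/3}$ for the extreme eigenvalues. Third, I would reduce the probability $\P(N^{2/3}(\lambda_1 - \lambda_+) \le s)$ to the expectation of a smooth bounded functional $\Phi(m_N)$ localized in a window of size $N^{-2/3-\e'}$ near the edge, and compare $\E^{\bv}\Phi(m_N)$ with $\E^{\bw}\Phi(m_N)$ by a Lindeberg-type swap of the entries of $X$.

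For Steps 1 and 2, the Stieltjes transform satisfies $\absa{m_N(z) - m_{MP}(z)} \lec (N\eta)^{-1}$ with very high probability, uniformly for $E$ in a neighbourhood of the spectrum and $\eta \geq N^{-1+\e}$; this is precisely the local law announced in the abstract, and it would be proved via the self-consistent equation obtained by Schur complement expansions of the diagonal entries of $G(z) \deq (H-z)^{-1}$, combined with the large-deviation bounds of Theorem \ref{thm:largedev} applied to the rows of $X$. A Helffer--Sj\"ostrand type contour argument then upgrades this estimate to edge rigidity, $\absa{\lambda^{\bv}_1 - \lambda_+} \lec N^{-2/3 + \e}$ with very high probability, and analogously at $\lambda_-$ when $\lim d_N \neq 1$; the exclusion $d_N \to 1$ for the smallest eigenvalue is forced by the collapse of $\lambda_-$ to the origin in that regime, where the local law degenerates. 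Approximating the indicator of $\h{\lambda^{\bv}_1 \le \lambda_+ + s N^{-2/3}}$ by a smooth cutoff applied to $N \int_{\lambda_+ + s N^{-2/3}}^{\infty} \im m_N(x + \ii \eta_0)\, dx$ with $\eta_0 = N^{-2/3-\e'}$ then bounds this probability from above and below by $\E \Phi(m_N) \pm N^{-\delta}$, reducing the theorem to proving $\absa{\E^{\bv}\Phi(m_N) - \E^{\bw}\Phi(m_N)} \le N^{-\delta}$.

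The hard part --- and the reason a new Green function comparison theorem is required --- is the third step. In the Wigner setting a single entry swap corresponds to a rank-two perturbation along deterministic directions, and the four-term resolvent expansion in the swapped variable is controlled entrywise by the local law. For $H = \tp X X$, by contrast, changing a single entry $x_{ab}$ perturbs the entire $b$-th row and column of $H$ simultaneously: denoting the $a$-th row of $X$ (viewed as a column in $\R^N$) by $\bu$, the $b$-th standard basis vector by $\b{e}_b$, and $\xi \deq \tilde x_{ab} - x_{ab}$, one finds the structured rank-two perturbation
\[
  \Delta H = \xi\,(\bu\, \tp{\b{e}_b} + \b{e}_b\, \tp{\bu}) + \xi^2\, \b{e}_b\, \tp{\b{e}_b},
\]
where $\bu$ is itself random and already embedded in the unperturbed $H$. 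Expanding $G(H + \Delta H)$ to order four in $\xi$, the zeroth, first and second order terms cancel in expectation because the first two moments of $\xi$ match between $\bv$ and $\bw$; the problem therefore reduces to showing that the third-order contribution is $o(N^{-2})$ per swap, summed over the $MN$ entries. This is where the main technical work lies: one needs \emph{isotropic}-type entrywise bounds on $G$ that remain uniform in the directions spanned by the rows of $X$, and the dependence of $\bu$ on the unperturbed $H$ must be handled by conditioning on the $a$-th row of $X$ and exploiting cancellations in the resulting row sum via the dependent-entries variant of Theorem \ref{thm:largedev}. Once this comparison closes for the smooth functional $\Phi$, the conclusion for $\lambda_1$ follows from Step~2; the statement for $\lambda_{\min\{M,N\}}$ (when $\lim d_N \neq 1$) is then obtained by applying the same argument near $\lambda_-$.
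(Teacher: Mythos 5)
Your Steps 1 and 2 (strong local Marchenko--Pastur law up to the edge, then rigidity and reduction of the distribution of $\lambda_1$ to a smoothed linear statistic of $\Im m$ at $\eta_0=N^{-2/3-\e'}$) coincide with the paper's route, which runs through Theorems \ref{45-1}--\ref{45-2}, Lemma \ref{lem:21} and Corollary \ref{23}. The divergence — and the gap — is in Step 3. You propose to swap the $MN$ entries one at a time, which for $H=\tp{X}X$ produces the structured rank-two perturbation $\Delta H=\xi(\bu\,\tp{\b e_b}+\b e_b\,\tp{\bu})+\xi^2\,\b e_b\tp{\b e_b}$ with $\bu$ a \emph{random} row of $X$ correlated with the unperturbed resolvent. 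You correctly identify that closing the third-order term at the required rate $o(N^{-2})$ per swap then needs isotropic-type bounds on quantities like $\tp{\bu}G\,\b e_b$, uniform over the random directions $\bu$, plus a conditioning argument to decouple $\bu$ from $H$. But you only name these inputs; you do not establish them, and they are not available from the entrywise local law proved here (an isotropic local law is a substantial separate piece of technology). As written, the crucial estimate of your proof is asserted rather than proved.

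The paper avoids this difficulty entirely by a different decomposition: it replaces one \emph{column} at a time ($N$ swaps rather than $MN$), so that $X_\gamma^{(\gamma)}=X_{\gamma-1}^{(\gamma)}$ and the difference $\tr G-\tr G^{(\gamma)}+z^{-1}=zG_{\gamma\gamma}\langle\bx_\gamma,(\mG^{(\gamma)})^2\bx_\gamma\rangle$ involves only quadratic forms of the swapped column $\bx_\gamma$ against resolvents $\mG^{(\gamma)}$ that are \emph{independent} of it. The standard large deviation lemma (Lemma \ref{lem:largdev}) then controls these quadratic forms, a Taylor expansion of $F$ in $y=\eta(\tr G-\tr G^{(\gamma)}+z^{-1})$ shows the per-column error is $N^{-7/6+C\e}$ depending only on the first two moments, and telescoping over $N$ columns gives $N^{-1/6+C\e}$ (Lemma \ref{lem:greenfned} and Theorem \ref{GFCT}). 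This buys the comparison with only two-moment matching and with no isotropic input. Two further omissions in your sketch: the theorem covers $\lim_N d_N=1$ for the largest eigenvalue, which requires a separate argument (restricting the local law to $\Re z\ge\e$ and embedding $X$ into a wider rectangular matrix, as done at the end of Section \ref{pro:UE}); and the moment-matching cancellation concerns the moments of $x_{ab}$ and $\tilde x_{ab}$ individually, not of their difference $\xi$.
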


In \cite{Sosh99,Pech07} and \cite{Sod1}, Soshnikov, P\'ech\'e, Feldheim
and Sodin proved that for the covariance matrices whose entries have
a~symmetric probability density function (which includes the Wishart
matrix), the largest and smallest $k$ eigenvalues after appropriate
centering and rescaling converge in distribution to the Tracy--Widom
law.\footnote{Here we use the term Tracy--Widom law as in
\cite{Sosh99}.} We have the following immediate corollary of Theorem
\ref{twthm}:
%
%
\begin{corollary} Let $X$ with independent entries satisfying (\ref
{eqnXmat}) and (\ref{eqnXmatexpbd}), and let $\lim_{N \rightarrow
\infty} d_N \in(0, \infty)$. For any fixed $k>0$, we have
\begin{eqnarray*}
&& \biggl(\frac{M\lambda_1 - (\sqrt{N} + \sqrt{M})^2} {(\sqrt{N} +
\sqrt{M})((1/\sqrt N) + (1/\sqrt M))^{1/3}}, \ldots,
\\
&&\hspace*{29pt} \frac{M\lambda_k- (\sqrt{N} + \sqrt{M})^2} {(\sqrt{N} +
\sqrt{M})((1/\sqrt N) + (1/\sqrt M))^{1/3}} \biggr) \longrightarrow
\mathrm{TW}_1,
\end{eqnarray*}
where $\mathrm{TW}_1$ denotes the Tracy--Widom distribution. An
analogous statement holds for the smallest eigenvalues, when $\lim_{N\to\infty}d_N \in(0,\infty) \setminus\{1\}$. 
\end{corollary}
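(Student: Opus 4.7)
\begin{pf}[Proof proposal]
The plan is to deduce the corollary by comparing the general ensemble $X^\bv$ to a Gaussian reference ensemble $X^\bw$ (the standard real Wishart matrix) for which the Tracy--Widom limit is already known, and then appealing to Theorem \ref{twthm} to transfer the limit. Concretely, I will take $X^\bw$ to have i.i.d.\ real Gaussian entries of variance $M^{-1}$; this clearly satisfies both \eqref{eqn:Xmat} and \eqref{eqn:Xmatexpbd}. By the results of Soshnikov, P\'ech\'e and Feldheim--Sodin cited in the introduction, the rescaled top (and bottom, when $\lim d_N\neq 1$) $k$ eigenvalues of $X^\bw$ jointly converge in distribution to $\mathrm{TW}_1$ under exactly the affine rescaling appearing in the corollary. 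The only thing left is therefore to propagate this convergence from $\bw$ to the arbitrary distribution $\bv$.

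The first step is to observe that the rescaling used in the corollary is, for $d_N$ bounded away from $0$ and $\infty$, equivalent to the one in Theorem \ref{twthm}. Writing $\lambda_+=(1+\sqrt{d_N})^2$, one checks that
\[
\frac{M\lambda_i-(\sqrt N+\sqrt M)^2}{(\sqrt N+\sqrt M)\pb{N^{-1/2}+M^{-1/2}}^{1/3}}
= c_N\,N^{2/3}\bke{\lambda_i-\lambda_+},
\]
where $c_N=c(d_N)$ is a positive constant that depends only on $d_N$ and converges to a strictly positive finite limit. So it suffices to prove joint convergence of $\bke{N^{2/3}(\lambda^\bv_i-\lambda_+)}_{i=1}^k$ to a $c(d)^{-1}$-rescaled $\mathrm{TW}_1$ vector, which is the same statement up to relabeling.

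The second step is to upgrade Theorem \ref{twthm} to a joint statement for the top $k$ eigenvalues. Here I would invoke exactly the same Green function comparison argument that underlies Theorem \ref{twthm}: since the proof of that theorem yields comparison of smooth functionals of the rescaled eigenvalues near $\lambda_+$ (via derivatives of sums of the form $\sum_i f(N^{2/3}(\lambda_i-\lambda_+))$), it applies verbatim to test functions of $k$ variables and therefore produces, for any bounded smooth $F:\mathbb R^k\to\mathbb R$,
\[
\absb{\E^\bv F\pb{N^{2/3}(\lambda^\bv_1-\lambda_+),\dots,N^{2/3}(\lambda^\bv_k-\lambda_+)}
     -\E^\bw F\pb{N^{2/3}(\lambda^\bw_1-\lambda_+),\dots,N^{2/3}(\lambda^\bw_k-\lambda_+)}}\leq N^{-\delta'}
\]
for some $\delta'>0$. (Alternatively, one can apply Theorem \ref{twthm} to $\lambda_1,\dots,\lambda_k$ separately together with a standard sandwiching argument that uses the level repulsion / rigidity of eigenvalues near the edge established en route to Theorem \ref{twthm}.) Combined with the known joint Tracy--Widom limit for $X^\bw$, this gives joint convergence in distribution of the rescaled top $k$ eigenvalues of $X^\bv$ to $\mathrm{TW}_1$.

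The argument for the smallest eigenvalues when $\lim_{N\to\infty} d_N\in(0,\infty)\setminus\{1\}$ is completely analogous, using the bottom-edge part of Theorem \ref{twthm} together with the bottom-edge Tracy--Widom convergence for the Wishart matrix proved by Feldheim--Sodin. I do not expect any serious obstacle: the only nontrivial point is the joint extension of Theorem \ref{twthm} described above, and this is a routine consequence of the multi-variable form of the Green function comparison method, which the authors develop as their main technical input.
\end{pf}
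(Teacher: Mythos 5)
Your proposal is correct and follows essentially the same route as the paper, which treats the corollary as immediate from Theorem \ref{twthm} (and its joint extension \eqref{twa} via Theorem \ref{GFCT2}) combined with the known Tracy--Widom limits for the Gaussian/symmetric reference ensemble of Soshnikov, P\'ech\'e and Feldheim--Sodin, after identifying the corollary's normalization with $c_N N^{2/3}(\lambda_i-\lambda_+)$. The only caveat is your parenthetical alternative: applying Theorem \ref{twthm} to each $\lambda_i$ separately yields only marginal, not joint, convergence, so the multi-variable comparison \eqref{twa} is genuinely needed for $k>1$ --- but that is your primary argument, so the proof stands.
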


%
%
\begin{remark}\label{remark1}
Clearly, our result covers the case where the matrix entries have
Gaussian divisible distribution (see \cite{Wang11}, Section~2) and the
case where the support of the distribution of the matrix entries
consists of only two points. Using these two cases and the results of
\cite{Wang11}, the sub-exponential-decay assumption in Corollary
\ref{eqnXmatexpbd} can be replaced with the existence of sufficiently
high number of moments. For details, see the discussion below the
Theorem 2.2 of \cite{Wang11}. However we believe that all of our
results can be proved under a~uniform bound on $p$th moments of the
matrix elements (say $p=4$ or $5$), using the methods in \cite{EKYY12}
and~\cite{LY1}; we will pursue this elsewhere.
\end{remark}

%
%
\begin{remark}
Theorem \ref{twthm} can be extended to obtain universality of finite
correlation functions of extreme eigenvalues. For example, we have the
following extension of (\ref{tw}): for any fixed $k$,
%
%
\begin{eqnarray}\label{twa}
\qquad && \mathbb{P}^\mathbf{v} \bigl( N^{2/3} \bigl(
\lambda^{\mathbf{v}}_1 -\lambda_+ \bigr) \le s_1-
N^{-\varepsilon}, \ldots, N^{2/3} \bigl( \lambda^{\mathbf{v}}_{k}
-\lambda_+ \bigr) \le s_{k}- N^{-\varepsilon} \bigr)-
N^{-\delta}\nonumber
\\
&&\qquad\le\mathbb{P}^\mathbf{w} \bigl( N^{2/3} \bigl( \lambda
^{\mathbf{w}}_1 -\lambda_+ \bigr) \le s_1, \ldots,
N^{2/3} \bigl( \lambda^{\mathbf{w}}_{k} -\lambda_+ \bigr)
\le s_{k} \bigr)
\nonumber\\[-8pt]\\[-8pt]
&&\qquad\le\mathbb{P}^\mathbf{v} \bigl( N^{2/3} \bigl( \lambda
^{\mathbf{v}}_1 -\lambda_+ \bigr) \le s_1+
N^{-\varepsilon}, \ldots,\nonumber
\\
&&\hspace*{70pt}  N^{2/3} \bigl( \lambda^{\mathbf{v}}_{k}
-\lambda_+ \bigr) \le s_{k}+ N^{-\varepsilon} \bigr)+
N^{-\delta}\nonumber
\end{eqnarray}
for all sufficiently large $N$. The proof of (\ref{twa}) is similar to
that of (\ref{tw}), and we will not provide details, except stating the
general form of the Green function comparison theorem (Theorem
\ref{GFCT2}) needed in this case. We remark that edge universality is
usually formulated in terms of joint distributions of edge eigenvalues
in the form (\ref{twa}) with fixed parameters $s_1, s_2,\ldots,$ etc.
Our result holds uniformly in these parameters, that is, they may
depend on $N$. However, the interesting regime is $|s_j|\le O((\log
N)^{\log\log N})$; otherwise, the rigidity estimate obtained in
(\ref{resrig}) will give stronger control than (\ref{twa}).
\end{remark}

The first step toward proving Theorem \ref{twthm} is to obtain a~strong
\textit{local Marcenko--Pastur law}, a~precise estimate of the local
eigenvalue density in the optimal scale $N^{-1+o(1)}$. We state and
prove this in Theorem \ref{451}. This theorem is our key technical tool
for proving rigidity of eigenvalues (see Theorem \ref{452}) and
universality. En route to this, we also obtain precise bounds on the
matrix elements of the corresponding Green function. All of our results
regarding the strong Marcenko--Pastur law do not require independence
of the entries of the data matrix, but need only weak dependence as
will be explained in Section~\ref{secMPlaw}. An important technical
ingredient required for the estimates for our strong Marcenko--Pastur
law and the rigidity of eigenvalues is an abstract decoupling lemma
(Lemma \ref{abstractZlemma}) for weakly dependent random variables,
proved in Section~\ref{seczlemma}.

Using the strong Marcenko--Pastur law and the existing results (such as
\cite{EYYgenwig} and Theorem 2.1 in \cite{ESYY}), we also show bulk
universality holds for covariance matrices in almost optimal scale:\vadjust{\goodbreak}

%
%
\begin{theorem}[(Universality of eigenvalues in bulk)] \label{thmmain}
Let $X^{\mathbf{v}}, X^{\mathbf{w}}$ be as defined before. Assume that
$\lim_{N \rightarrow\infty} d_N \in(0,\infty)\setminus\{1\}$. Let
$E\in[\lambda_-+r,\break  \lambda_+-r] $ with some $ r> 0$. Then for any
$\varepsilon>0$, $N^{-1+\varepsilon} < b < r/2$, any fixed integer
$n\ge1$ and for any compactly supported continuous test function
$O\dvtx\mathbb{R}^n\to\mathbb{R}$, we have
%
%
\begin{eqnarray}\label{abstrthm2intro}
&& \lim_{N \rightarrow\infty} \int_{E-b}^{E+b}
\frac{\mathrm{d}E'}{2b} \int_{\mathbb{R}^n} O(\alpha_1,\ldots,
\alpha_n) \bigl( p_{\mathbf{v}N}^{(n)} - p_{\mathbf{w}, N}
^{(n)} \bigr)
\nonumber\\[-4pt]\\[-12pt]
&&\hspace*{91pt}{}\times \biggl(E'+\frac{\alpha_1}{N\varrho_c(E)}, \ldots,
E'+\frac{\alpha_n}{
N\varrho_c(E)} \biggr) \prod_i
\frac{\mathrm{d}\alpha_i}{\varrho_c(E)} =0,\hspace*{-22pt}\nonumber
\end{eqnarray}
where $ p_{\mathbf{v}, N}^{(n)}$ and $ p_{\mathbf{w}, N}^{(n)}$ %
are the n-points correlation functions of the eigenvalues of
$(X^\mathbf{v})^\dagger X^\mathbf{v}$ and $(X^\mathbf {w})^\dagger
X^\mathbf{w}$, respectively.
\end{theorem}

%
%
\begin{remark}
As in Remark \ref{remark1}, using the four moment theorem in
\cite{TaoVu09}, the sub-exponential-decay assumption for the matrix
entries can be replaced with the existence of a~sufficiently high
number of moments.
\end{remark}

%
%
\begin{remark}\label{remremcompbul}
Compared to the results obtained in \cite{Pech09,ESYY}, our
Theorem~\ref{thmmain} is an improvement on two fronts: \textup{(i)}~in
\cite{Pech09,ESYY}, for (\ref{abstrthm2intro}), the authors required
that
\[
\sum_{i=1}^{M_k}\bigl|\partial_x^i
\log u_0(x)\bigr|\leq C_k\bigl(1+|x|\bigr)^{C_k}
\]
for some $M_k$ and $C_k$, where $u_0$ is the probability density
function of the matrix entries; see formulas (1.3)--(1.5) in
\cite{Pech09} and formula (3.6) in \cite{ESYY}. \textup{(ii)} We show
that the bulk university holds in almost optimal scale:
$b=N^{-1+\varepsilon}$. In the main theorem of \cite{ESYY}, bulk
universality was shown for $b \sim O(1)$.\footnote{For two quantities
$a,b$ we write $a \sim b$ to denote $cb\leq a~\leq C b$ for some $c, C
> 0$.} We also note that in~\cite{Pech09}, the integral in
(\ref{abstrthm2intro}) is not required. On the other hand, the proof in
\cite{Pech09} does not work for covariance matrices with real valued
entries.
\end{remark}

%
%
\begin{remark}
Our result heavily relies on the Theorem 2.1 of \cite{ESYY}, but we are
able to show universality up to this optimal scale, mainly because of
our stronger results on the strong local Marcenko--Pastur law and the
rigidity result for eigenvalues obtained in Theorems~\ref{451}
and~\ref{452}, respectively.
\end{remark}

%
%
\begin{remark}
Tao and Vu \cite{TaoVu09} derived bulk universality without the
integral in (\ref{abstrthm2intro}), but they required that the matrix
entries of the two covariance matrices have identical first four
moments.
\end{remark}

\subsection{Main ideas}
The approach we take in this paper to prove universality is the one
developed in a~recent series of papers
\cite{EKYY11,EKYY12,ESY4,EPRSY,ESYY,EYYBulkuni,EYYgenwig,EYYrigid};
however, there are some important differences which we highlight below.
Our proof of the above result proceeds via the Green function
comparison theorem as in the case of Wigner matrices; however,
\emph{unlike Wigner matrices}, \textit{the elements within the same column of
a~covariance matrix are not independent}. In order to address this key
difficulty, we introduce new ideas and \emph{establish a~novel version
of the Green function comparison theorem}. In particular, in Theorem
\ref{thmsuffedgeuni} (see Section~\ref{ld-MPlaw}) we give sufficient
criteria for proving edge universality for matrix ensembles of the form
${Y}^\dagger Y$ for a~generic data matrix $Y$ with dependent entries
(e.g., correlation matrices). \emph{This enables us to show the edge
universality for covariance matrices when $\lim_{N\to\infty} d_N \in
(0, \infty)$, under the assumption that the first two moments of the
matrix entries are equal to that of the standard Gaussian}.
Our method is also useful for establishing universality for a~huge
class of matrix ensembles with dependent entries. For example, in
a~recent paper \cite{BPZ}, Bao, Pan and Zhou used our method to show
universality for a~class of correlation matrices. For more general edge
universality results for correlation matrices, see a~later paper
\cite{PillYin1102}, which is also based on our Green function
comparison theorem. As mentioned above, for our strong Marcenko--Pastur
law, we use an abstract decoupling lemma (Lemma \ref{abstractZlemma})
for weakly dependent random variables. This lemma is novel and is
applicable in other settings such as non-Hermitian ensembles
\cite{BYY2012}.

For proving bulk universality of eigenvalues, we follow the general
approach for the universality of Gaussian divisible ensembles
\cite{EYYgenwig,EYYrigid,EKYY11,EKYY12,KY1,KY2} by embedding the
covariance matrix into a~stochastic flow of matrices and so that the
eigenvalues evolve according to a~distinguished coupled system of
stochastic differential equations, called the Dyson Brownian motion
\cite{Dy}. An important idea in the papers mentioned above is to
estimate the time to local equilibrium for the Dyson Brownian motion
with the introduction of a~new stochastic flow, the \textit{local
relaxation flow}, which locally behaves like a~Dyson Brownian motion
but has a~faster decay to global equilibrium. This approach, first
introduced in \mbox{\cite{ESY4,ESYY}}, eliminates entirely the usage of
explicit formulas. We will also follow this route and use the strong
local Marcenko--Pastur law to show that the time for the Dyson Brownian
motion (corresponding to the covariance matrix) to reach local
equilibrium is about $O(N^{-1})$. Once we prove this result, all that
remains to be done is to show that the local statistics at
$t=O(N^{-1})$ coincide with those of the initial matrix, that is,
$t=0$. To achieve this, we again use the Green function comparison
method. Roughly speaking, the Green function comparison method exploits
the fact that the equilibrium time is very ``small'' [$O(N^{-1})$], and
therefore the first few moments of the matrix entries at time $t =
N^{-1}$ will be nearly identical to those at $t = 0$.

\subsection{Comments on other limiting regimes of $d_N$}
The assumption\break $\lim_{N \rightarrow\infty} d_N\in(0,\infty)$ is
mostly for simplicity, and we believe that with some more effort, most
of our results can be extended to the case $\lim_{N \to\infty} d_N =
\{0, \infty\}$. This will be pursued in our future works.

However, we believe that universality at the soft edge for
$\lim_{N\rightarrow\infty} d_N=1$ will be much harder. There is
a~singularity of the eigenvalue density at $x=0$. More precisely, the
typical distance between adjacent eigenvalues near $x=0$ is
$O(N^{-2})$. 
For studying the smallest eigenvalue one needs to overcome several
obstacles: (1)~The usual moment method which estimates $\mathbb
{E}({X}^\dagger X)^k$ with large $k\in\mathbb{N}$ does not work in
obtaining bounds for the smallest eigenvalue. (2) For the ``square
case'' ($N = M$), in \cite{TV10} the authors proceeded via analyzing
$X^{-1}$ directly; this strategy seems out of reach for the nonsquare
case. (3)~In fact, as in \cite{BYY2012,CMS}, one can prove that the
$m(z)$ does satisfy the local Marchenko--Pastur law in the case
$\lim_{N\rightarrow\infty} d_N=1$ up to the scale $\eta\gg(N
|m_c|)^{-1}$. Note $\eta=(N\Im m_c)^{-1}$ is the scale of individual
eigenvalue. At the soft edge (i.e., for largest eigenvalues), it can be
shown that $\Im m_c\leq|m_c|$, and thus we have a~strong estimate on
$m(z)$ in the scale which is small enough for estimating the
distribution of single eigenvalue. But at the hard edge $\Im
m_c\sim|m_c|$, so our method used for estimating $m_c$ at the soft edge
cannot be directly applied to the hard edge. It is proved in
\cite{BYY2012,CMS} that the density of eigenvalues satisfy the
Marchenko--Pastur law. (Only the case $d_N=1$ is proved in
\cite{BYY2012,CMS}, but the result can be easily extended to the case
$\lim_{N\rightarrow\infty} d_N=1$.) For the distribution of the
smallest eigenvalues, the only universality result we know is in
\cite{TV10}, as mentioned above.

Finally we note that the authors in \cite{LY1} recently showed
a~necessary and sufficient condition on the edge universality of Wigner
matrices. Based on this, we conjecture that for the edge universality
of covariance matrices whose entries are i.i.d., the necessary and
sufficient condition on the distribution of the matrix entries is given
by $\lim_{s\to\infty} s^4\mathbb{P}(|q_{12}|\ge s)=0$.

\subsection{Organization of the paper} 
In Section~\ref{secprelim} we set notation and give some basic
definitions. In Section~\ref{secMPlaw} we give statements of the strong
version of the Marcenko--Pastur law, rigidity and delocalization of
eigenvectors. In Sections~\ref{proUE} and \ref{proUB}, we prove,
respectively, the edge and bulk universality results. In
Sections~\mbox{\ref{ld-MPlaw}--\ref{secp45}} we give proofs of the
strong Marcenko--Pastur law and rigidity of eigenvalues. In
Section~\ref{seczlemma}, we state and prove an abstract decoupling
lemma for weakly dependent random variables which is used to prove the
strong Marcenko--Pastur law.

\section{Preliminaries} \label{secprelim}
Define
%
%
\begin{eqnarray}\label{eqnGg}
H &:=& {X}^\dagger X,\qquad G(z):= (H-z)^{-1}=
\bigl({X}^\dagger X-z \bigr)^{-1},
\nonumber\\[-10pt]\\[-10pt]
m(z)&:=&\frac{1}{N}\operatorname{Tr}G(z),\qquad
\mathcal{G}(z):= \bigl( X{X}^\dagger-z \bigr)^{-1}.\nonumber
\end{eqnarray}
Since the nonzero eigenvalues of $X{X}^\dagger$ and ${X}^\dagger X$ are
identical and $ X {X}^\dagger$ has $M-N$ more (or $N-M$ less) zero
eigenvalues,
%
%
\begin{equation}
\label{yz10} \operatorname{Tr}G(z)-\operatorname{Tr}\mathcal{G}(z)=
\frac{M-N }{z}.
\end{equation}

We will often need to consider minors of $X$ defined below:

%
%
\begin{definition}[(Minors)]
For $\mathbb T \subset\{1, \ldots, N\}$ we define $X^{(\mathbb T)}$ as
the $(M \times(N - |\mathbb T |))$ minor of $X$ obtained by removing
all columns of $X$ indexed by $i \in\mathbb T$. Note that we keep the
names of
indices of $X$ when defining $X^{(\mathbb T)}$,
\[
\bigl(X^{(\mathbb T)} \bigr)_{ij}:= \mathbf1 (j \notin\mathbb T)
X_{ij}.
\]
\end{definition}

The quantities $G^{(\mathbb T)}(z)$, $\mathcal{G}^{(\mathbb T)}(z)$,
$\lambda _\alpha^{(\mathbb T)}$, $\mathbf{u}_\alpha^{(\mathbb T)}$,
$\mathbf {v}_\alpha ^{(\mathbb T)}$, etc. are defined similarly
using~$X^{(\mathbb T)}$. Furthermore, we abbreviate $(i) = (\{i\})$ as
well as $(i \mathbb T) =\break  (\{i\} \cup\mathbb T)$. We also set
%
%
\begin{equation}
\label{eqnmTdef} m^{(\mathbb T)}(z):= \frac{1}{N} \sum
_{i\notin\mathbb T } G^{(\mathbb
T)}_{ii}(z).
\end{equation}
We denote the $i$th column of $X$ by $\mathbf{x}_i$, which is an
$M\times1$ vector. Recall $\lambda_+, \lambda_{-}$ from
(\ref{eqnlambdapm}). For
$z = E + i \eta$, set%
%
\begin{equation}
\label{defkappa} \kappa:= \min\bigl(|\lambda_+ - E|, |E - \lambda_{-}|\bigr).
\end{equation}
Throughout the paper we will use the letters $C,C_\zeta, c$ to denote
generic positive constants whose precise value may change from one
occurrence to the next but independent of everything else.

Define the Green function of ${X}^\dagger X$ by
%
%
\begin{equation}
\label{eqngreen} G_{ij}(z) = \biggl(\frac{1}{{X}^\dagger
X-z}
\biggr)_{ij}, \qquad z=E+i\eta, \qquad E\in\mathbb{R}, \qquad\eta>0.
\end{equation}
The Stieltjes transform of the empirical eigenvalue distribution of
${X}^\dagger X $ is
given by%
%
\begin{equation}
m(z): = \frac{1}{N} \sum_j
G_{jj}(z) = \frac{1}{N} \operatorname{Tr}\frac{1}{{X}^\dagger X-z}.
\label{mNdef}
\end{equation}
We will be working in the regime
%
%
\begin{equation}
\label{eqndefd} d:=d_N:=N/M, \qquad\lim_{ N\to\infty
} d
\neq{0,\infty}.
\end{equation}
For our results at the hard-edge (smallest eigenvalues) and for bulk
universality results, we will further require that $\lim_{N \to\infty}
d_N \neq1$. Define
%
%
\begin{equation}
\label{eqnlambdapm} \lambda_\pm:= (1\pm\sqrt{d} )^2.
\end{equation}
The Marchenko--Pastur law \cite{MP} (henceforth abbreviated by MP) is
given by
%
%
\begin{equation}
\label{defrhow} \varrho_c(x)=\frac{1}{2\pi
d}\sqrt{
\frac{ [(\lambda_+-x)(x-\lambda_-) ]_+}{x^2}}.
\end{equation}
We define $m_c(z)$, $z\in\mathbb{C}$, as the Stieltjes transform of
$\varrho_c$,
that is,%
%
\begin{equation}
\label{defmW} m_c(z)=\int_{\mathbb{R}}
\frac{\varrho_c(x)}{(x-z)}\,\mathrm{d}x.
\end{equation}
The function $m_c$ depends on $d$ and has the closed form expression
%
%
\begin{equation}
\label{mwz=} m_c(z)=\frac{1-d-z +i \sqrt{(z-\lambda_-)(\lambda
_+-z)}}{2 d z},
\end{equation}
where $\sqrt{\mbox{ }}$ denotes the square root on the complex plane
whose branch cut is the negative real line. One can check that $m_c(z)$
is the unique solution of the equation
\[
m_c(z) + \frac{1}{z - (1-d) + z d m_c(z)} = 0
\]
with $\Im m_c(z) > 0$ when $\Im{z} > 0$. 
Define the \textit{normalized empirical counting function} by
%
%
\begin{equation}
{\mathfrak n}(E):= \frac{1}{N}\# \{ \lambda_j\geq E\}.
\label{deffn}
\end{equation}
Let%
%
\begin{equation}
n_c(E): = \int_{E}^{\infty}
\varrho_c(x)\,\mathrm{d}x \label{nsc}
\end{equation}
so that $1 - n_c(\cdot)$ is the distribution function of the MP law.

By the singular value decomposition of $X$, there exist orthonormal
bases $\{{\mathbf u}_1, {\mathbf u}_2,\ldots, {\mathbf u}_{M }\}
\subset\mathbb{C}^{M}$ and $\{{\mathbf v}_1, \ldots, {\mathbf v}_{N }\}
\subset\mathbb{R}^N$
such that 
%
%
\begin{equation}
\label{eqnSVD} X = \sum_{\alpha= 1}^{M} \sqrt{
\lambda_\alpha} {\mathbf u_\alpha} \mathbf{v}_\alpha^\dagger
= \sum_{\alpha= 1}^{N} \sqrt{
\lambda_\alpha} {\mathbf u_\alpha} \mathbf{v}_\alpha^\dagger,
\end{equation}
where $\lambda_1\geq\lambda_2\cdots\lambda_{\max\{M,N\}}\geq0$,
$\lambda_\alpha=0$ for $ \min\{N, M\}+1\leq\alpha\leq\max\{N,M\} $, and
we let $\mathbf{v}_\alpha=0 $ if $\alpha>N$ and $\mathbf{u}_\alpha =0 $
for $\alpha>M$. We also define the classical location of the
eigenvalues with
$\varrho_c$ as follows:%
%
\begin{equation}
\label{defgj} \int_{\gamma_j}^{\lambda_+}
\varrho_c(x) \,\mathrm{d}x =\int_{\gamma_j}^{+\infty}
\varrho_c(x) \,\mathrm{d}x =j/N.
\end{equation}
Define the parameter%
%
\begin{equation}
\label{eqnvarphi} \varphi:= (\log N)^{\log\log N}.
\end{equation}
For $\zeta\geq 0$, define the set
%
%
\begin{equation}
\label{defS} \qquad\mathbf{S}(\zeta):= \bigl\{z \in\mathbb{C}\dvtx
{\bolds{1}_{d>1}}(\lambda_-/5) \leq E \leq5\lambda_+, \varphi^\zeta
N^{-1} \leq\eta \leq10(1+d) \bigr\}.
\end{equation}
Note that {$m_c\sim1$} in $\mathbf{S}(0)$. Also the cases ${ d}>1$ and
${ d}<1$ are not symmetric in the above definition. Actually the proof
of universality in the case ${d}>1$ is much harder, since it has many
zero eigenvalues. This issue can be easily avoided if matrix entries
are independent since ${X}^\dagger X$ and $X{X}^\dagger$ have the same
nonzero eigenvalues. Since, in the strong Marcenko--Pastur law
established next section, \emph{we do not assume independence unlike
previous works}, the proof is more difficult.
%
%
\begin{definition}[(High probability events)]\label{defhp}
Let $\zeta> 0$.
We say that an 
event $\Omega$ holds with \emph{$\zeta$-high probability} if there
exists a~constant $C > 0$ such that
%
%
\begin{equation}
\label{highprob} \mathbb{P} \bigl(\Omega^c \bigr) \leq N^C
\exp \bigl(-\varphi^\zeta \bigr)
\end{equation}
for large enough $N$.
\end{definition}
The next lemma collects the main identities of the resolvent matrix
elements $G_{ij}^{(\mathbb T) }$ and $\mathcal{G}^{(\mathbb
T)}_{ij}(z)$.

%
%
\begin{lemma}[(Resolvent identities)]\label{lemmaresolventid}
%
%
\begin{eqnarray}
\qquad\qquad G_{ii}(z) &=& { 1 \over- z - z \langle{\mathbf x}_i,\mathcal{G}^{(i)}(z)
{\mathbf x}_i\rangle},\quad\mbox{i.e., } \bigl\langle{ \mathbf x}_i,\mathcal{G}^{(i)}(z) {\mathbf
x}_i \bigr\rangle=\frac{-1}{z G_{ii}(z)}-1, \label{eqnGii}
\\
G_{ij}(z) &=& z G_{ii}(z)
G_{jj}^{(i)} (z) \bigl\langle{\mathbf x}_i,
\mathcal{G} ^{(ij)}(z) {\mathbf x}_j \bigr\rangle, \qquad i
\neq j, \label{eqnGij}
\\
G_{ij}(z) &=& G_{ij}^{(k)}(z) +
\frac{G_{ik} (z)
G_{kj}(z)}{G_{kk}(z)},\qquad i,j \neq k\label{eqnGijGijk}.
\end{eqnarray}
%
\end{lemma}
%

\begin{pf} The proof is straightforward and needs only elementary
linear algebra; see
Lemma 3.2 of \cite{EYYrigid}.
\end{pf}

\section{Strong Marchenko--Pastur law} \label{secMPlaw}
Our goal in this section is to estimate the following quantities:
%
%
\begin{equation}
\label{defLambda} \qquad \Lambda_d: = \max_k
|G_{kk}-m_c|, \qquad\Lambda_o:=\max
_{k\ne\ell} |G_{k\ell}|, \qquad\Lambda:=|m-m_c|,
\end{equation}
where the subscripts refer to ``diagonal'' and ``off-diagonal'' matrix
elements. All these quantities depend on the spectral parameter $z$ and
on $N$, but for simplicity we suppress this in the notation.

For simplicity of exposition, henceforth in this section we focus on
the $\lim_{N}d_N\neq1$ case. The proof of the distribution of the
largest eigenvalue in the case $\lim_{N \to\infty}d_N=1$ is a~simple
extension of our proof of the case $\lim_{N\to\infty}d_N \in
(0,\infty)\setminus\{1\}$. Therefore, we will give only a~brief
discussion at the end of Section~\ref{proUE}.

The following is the main result of this section and our main technical
tool for establishing universality. It holds for both real and complex
valued entries. The proof of the results in this section is given in
Sections~\ref{ld-MPlaw}--\ref{secp45}.

%
%
\begin{theorem}[(Strong local Marchenko--Pastur law)] \label{451}
Let $X = [x_{ij}]$ with entries $x_{ij}$ satisfying (\ref{eqnXmat}) and
(\ref{eqnXmatexpbd}), and let $\lim_{N \to\infty} d_N \in (0,\infty)
\setminus\{1\} $. For any $\zeta>0$ there exists a~constant $C_\zeta$
such that the following events hold with $\zeta$-high probability:
\begin{longlist}[(iii)]
\item[(i)] The Stieltjes transform of the empirical eigenvalue
    distribution of $H $ satisfies
%
%
\begin{equation}
\label{Lambdafinal} \bigcap_{z \in{ \mathbf{S}}(C_\zeta)} \biggl\{ \Lambda(z)
\leq\varphi^{C_\zeta} \frac{1}{N\eta} \biggr\}.
\end{equation}

\item[(ii)] The individual matrix elements of
the Green function satisfy%
%
\begin{equation}
\label{Lambdaofinal} \bigcap_{z \in{ \mathbf{S}}(C_\zeta)} \biggl\{
\Lambda_o(z) +\Lambda_d \leq\varphi^{C_\zeta}
\biggl(\sqrt{\frac{\Im m_c(z) }{N\eta}}+ \frac{1}{N\eta} \biggr) \biggr\}.
\end{equation}

\item[(iii)] The smallest nonzero and largest eigenvalues of
$X^\dagger X$ satisfy%
%
\begin{equation}
\label{443} \lambda_- -N^{-2/3}\varphi^{C_\zeta}\leq\min
_{j\leq\min\{M, N\}} \lambda_j \leq\max_{j }
\lambda_j \le\lambda_++N^{-2/3}\varphi^{C_\zeta}.
\end{equation}

\item[(iv)] Delocalization of the eigenvectors of $X^\dagger X$,
%
%
\begin{equation}
\label{4444} \max_{\alpha\dvtx \lambda_\alpha\neq0}\|\mathbf {v}_\alpha\|
_\infty\leq\varphi^{C_\zeta}N^{-1/2}.
\end{equation}
\end{longlist}
\end{theorem}


%
%
%
\begin{remark}
To our knowledge, there are two weaker versions of the above theorem
previously established in \cite{GotzTikh04,ESYY}. In \cite{ESYY} the
error term obtained in (\ref{Lambdafinal}) is of order
$(N\eta)^{-1/2}/(\kappa+(N\eta)^{-1/2})^{1/2}$ [see (\ref{defkappa})] and
similar comments apply for the results in \cite{GotzTikh04}, whereas we
need the above stronger estimates for our work, especially for edge
universality.
\end{remark}

The main theorem above is then used to obtain the following results:
%
%
\begin{theorem}[(Rigidity of the eigenvalues of covariance matrix)]\label{452}
Recall $\gamma_j$ in~(\ref{defgj}). Let $X = [x_{ij}]$ with entries
$x_{ij}$ satisfying (\ref{eqnXmat}) and (\ref{eqnXmatexpbd}) and
$\lim_{N \to\infty} d_N \in(0,\infty) \setminus\{1\} $. For any $1\leq
j\leq N$, let
\[
\widetilde j=\min \bigl\{\min\{N, M\} + 1 -j, j \bigr\}.
\]

For any $\zeta>0$ there exists a~constant $C_\zeta$ such that
%
%
\begin{equation}
\label{resrig} |\lambda_j-\gamma_j|\leq
\varphi^{C_\zeta} N^{-2/3} \widetilde j^{-1/3}
\end{equation}
and
%
%
\begin{equation}
\label{resrig2} \bigl|\mathfrak n (E)-n_c(E) \bigr|\leq
\varphi^{C_\zeta} N^{-1}
\end{equation}
hold with $\zeta$-high probability for any $1\leq j\leq N$.
\end{theorem}

The above two results are stated under the assumption that the matrix
entries are independent. The independence assumption (of the elements
in each column vector of $X$) required in Theorems \ref{451} and
\ref{452} can be replaced with a~large deviation criteria as will be
explained below.

Let us first recall the following large deviation lemma for independent
random variables; see \cite{EYYBulkuni}, Appendix B for a~proof.
%
%
\begin{lemma}[(Large deviation lemma)] \label{lemlargdev} Suppose $a_i$
are independent, mean $0$ complex variables, with $\mathbb{E}|a_i|^2 =
\sigma^2$ and have a~sub-exponential decay as in (\ref{eqnXmatexpbd}).
Then there exists a~constant $\rho\equiv\rho(\vartheta) > 1$ such that,
for any $\zeta> 0$ and for any $A_i \in\mathbb{C}$ and $B_{ij}
\in\mathbb{C}$, the bounds
%
%
\begin{eqnarray}
\Biggl|\sum_{i=1}^M a_i
A_i\Biggr| &\leq& ( \log M)^{\rho\zeta\log\log M} \sigma\| A\|, \label{eqnld1}
\\
\Biggl|\sum_{i=1}^M \widebar a_i
B_{ii} a_i - \sum_{i=1}^M
\sigma^2 B_{ii} \Biggr| &\leq& ( \log M)^{\rho\zeta\log\log M}
\sigma^2 \Biggl(\sum_{i=1}^M
|B_{ii}|^2 \Biggr)^{1/2}, \label{eqnld2}
\\
\Biggl|\sum_{i \neq j} \widebar a_i
B_{ij} a_j\Biggr| &\leq& ( \log M)^{\rho\zeta\log
\log M}
\sigma^2 \biggl(\sum_{i \neq j}
|B_{ij}|^2 \biggr)^{1/2} \label{eqnld3}
\end{eqnarray}
hold with $\zeta$-high probability.
\end{lemma}

%
%
\begin{remark}
When $M \sim N$, equation (\ref{eqnld1}) yields that for any $\zeta>
0$, $|\sum_{i=1}^M a_i A_i| \leq\varphi^{C_\zeta} \sigma\|A\|$ for some
$C_\zeta> 0$ with $\zeta$-high probability. Here $\varphi$ is as
defined in (\ref{eqnvarphi}).
\end{remark}

Next we extend Theorems \ref{451} and \ref{452} by relaxing the
independence assumption.
%
%
\begin{theorem} \label{thmlargedev}
Let $X = [x_{ij}]$ be a~random matrix with $\mathbb{E}(x_{ij}^2) = 1/M$
and $\lim_{N \to\infty} d_N \in(0,\infty) \setminus\{1\} $.
Assume that the column vectors of the matrix $X$ are mutually
independent. Furthermore, suppose that for any fixed $j \leq N$, the
random variables defined by $a_i = x_{ij}, 1 \leq i \leq M$, satisfy the
large deviation bounds (\ref{eqnld1}), (\ref{eqnld2}) and
(\ref{eqnld3}), for any $A_i \in\mathbb{C}$ and $B_{ij} \in\mathbb{C}$
and some $\zeta> 0$. Then the conclusions of Theorems \ref{451} and
\ref{452} hold for the random matrix~$X$.
\end{theorem}

Thus Theorem \ref{thmlargedev} extends the universality results to
a~large class of matrix ensembles. For instance, let $h_{ij}$ be
a~sequence of i.i.d. random variables, and set
%
%
\begin{equation}
\label{eqnstudtstat} x_{ij} = {h_{ij} \over\sqrt{\sum_{i=1}^M
h^2_{ij}}}, \qquad1 \leq i \leq
M, 1 \leq j \leq N.
\end{equation}
Thus the entries of the column vector $(x_{1j}, x_{2j}, \ldots,
x_{Mj})$ are not independent, but exchangeable. Clearly
$\mathbb{E}(x^2_{ij}) = {1 \over M}$. The random variables $x_{ij}$
given by (\ref{eqnstudtstat}) are called self normalized sums and arise
in various statistical applications. For instance, the matrix $X =
[x_{ij}]$ constructed above is called the correlation matrix (see
\cite{John01,PillYin1102}) and is often preferred in applications such
as principal component analysis (PCA) due to the scale invariance of
the correlation matrix.

\begin{pf*}{Proof of Theorem \ref{thmlargedev}}
In the proofs of Theorems \ref{451} and \ref{452}, we use only the
large deviation properties of $a_i = x_{ij}$ and the\vspace*{1pt} fact that
$\mathbb{E}(x_{ij}^2) = 1/M$, instead of independence and
sub-exponential decay. Therefore the proofs of Theorems \ref{451} and
\ref{452} in fact yield Theorem \ref{thmlargedev}.
\end{pf*}

\section{Universality of eigenvalues at edge}
\label{proUE} In this section we give the proof of edge universality
stated in Theorem \ref{twthm}. For simplicity, we focus on the case
$\lim_{N \to\infty} d_N \in(0,\infty) \setminus\{1\}$ first and return
to the $\lim_{N \to\infty} d_N = 1$ at the end of this section. The
proof is loosely based on Theorem 2.4 of \cite{EYYrigid} which is an
analogous result for Wigner matrices, but in our case \emph{there is
a~key difference}: the entries within the same column of the matrix $H
= {X}^\dagger X$ are \emph{dependent}. To address this difficulty, we
give a~novel argument involving the Green function comparison. In the
following we consider the largest eigenvalue $\lambda_1$, but the same
argument applies to the smallest nonzero eigenvalue as well. Also for
the rest of this section, let us fix a~constant $\zeta> 0$.

For any $E_1\le E_2$ let
\[
\mathcal{N}(E_1, E_2): = \#\{ E_1\le
\lambda_j\le E_2\}
\]
denote the number of eigenvalues of the covariance matrix ${X }^\dagger
X $ in $[E_1, E_2]$ where $X $ is a~random matrix whose entries satisfy
(\ref{eqnXmat}) and~(\ref{eqnXmatexpbd}). By Theorems~\ref{451}~and~\ref{452} (rigidity of eigenvalues), there exists a~positive
constant $C_\zeta$ such that
%
%
\begin{eqnarray}
|\lambda_1 -\lambda_+ | &\leq&\varphi^{C_\zeta}N^{-2/3},
\label{6-1}
\\
\mathcal{N} \bigl(\lambda_+-2\varphi^{C_\zeta}N^{-2/3}, \lambda
_++2\varphi ^{C_\zeta}N^{-2/3} \bigr) &\leq&\varphi^{2C_\zeta}
\label{6-2}
\end{eqnarray}
hold with $\zeta$-high probability. Using these estimates, we can
assume that the parameter $s$ in (\ref{tw}) satisfies
%
%
\begin{equation}
\label{25} -\varphi^{C_\zeta} \le s \le\varphi^{C_\zeta
}.
\end{equation}
Set%
%
\begin{equation}
\label{defEL} E_\zeta:= \lambda_++2\varphi^{C_\zeta}N^{-2/3}
\end{equation}
and for any $E\le E_\zeta$ define $\chi_E:= {\mathbf1}_{[E, E_\zeta]}$
to be the characteristic function of the interval $[E,
E_\zeta]$. For any $\eta>0$ we define%
%
\begin{equation}
\label{thetam} \theta_\eta(x):=\frac{\eta}{\pi(x^2+\eta^2)} =
\frac{1}{\pi} \Im\frac{1}{x-i\eta}
\end{equation}
to be an approximate delta function on scale $\eta$. In the following
elementary lemma we compare the sharp counting function $\mathcal{N}(E,
E_\zeta)= \operatorname{Tr}\chi _E(H)$ by its approximation smoothed on
scale $\eta$. Notice that for any $\ell> 0$,
\[
\operatorname{Tr}\chi_{E-\ell} \ast\theta_\eta(H)=N
\frac{1}\pi \int_{E-\ell}^{E_\zeta}\Im m(y+i\eta) \,\mathrm{d}y.
\]
Let us fix $\varepsilon> 0$ and set%
%
\begin{equation}
\label{defeta1} \eta_1 = N^{-2/3 - 9 \varepsilon}.
\end{equation}

%
\begin{lemma}\label{lem21}
For any $\varepsilon>0$, set $\ell_1:=N^{-2/3-3\varepsilon}$. Then for
any $E$
satisfying%
%
\begin{equation}
\label{E-2N} |E-\lambda_+|\leq\tfrac{3} 2 \varphi^{C_\zeta}N^{-2/3},
\end{equation}
where the constant $C_\zeta$ is as in (\ref{6-1})--(\ref{defEL}), the
bound
%
%
\begin{equation}
\label{610} \bigl|\operatorname{Tr}\chi_E(H) - \operatorname{Tr}
\chi_E \ast \theta_{\eta_1} (H)\bigr| \le C \bigl( N^{-2\varepsilon}
+ \mathcal{N}(E- \ell_1, E+\ell_1) \bigr)
\end{equation}
holds with $\zeta$-high probability.
\end{lemma}
\begin{pf} 
From inequalities (\ref{6-1}), (\ref{6-2}) above, and (6.13) and (the
first line of) (6.17) of \cite{EYYrigid} we obtain
%
%
\begin{eqnarray}
\label{6-22}
&& \bigl|\operatorname{Tr}\chi_E(H) - \operatorname{Tr}
\chi_E \ast\theta _{\eta_1} (H)\bigr|\nonumber
\\
&&\qquad \le C \bigl( \mathcal{N}(E-
\ell_1, E+ \ell_1) +N^{- 5\varepsilon} \bigr)
\\
&&\quad\qquad{} +C N \eta_1 (E_\zeta-E) \int_\mathbb{R}
\frac{1} {y^2 + \ell_1^2 } \Im m( E-y+i\ell_1) \,\mathrm{d}y.\nonumber
\end{eqnarray}
By definition, $ \int_\mathbb{R}\Im m( E-y+i\ell_1)\,\mathrm{d}y=O(1)$.
For any fixed small enough $c>0$,
\[
\int_{ |y|\geq\varepsilon} \frac{1} {y^2 + \ell_1^2 } \Im m( E-y+i
\ell_1) \,\mathrm{d}y =O \bigl(c^{-2} \bigr).
\]
On the interval $ |y| \leq c$ we use (\ref{Lambdafinal}), that is,
\[
\Im m( E-y+i\ell_1) \le\Im m_c( E-y+i
\ell_1) + \frac{\varphi^{C_\zeta}} { N \ell_1}
\]
and the elementary estimate $ \Im m_c( E-y+i\ell_1)\le C\sqrt{\ell_1+ |
E-y -\lambda_+ |}$. Using the definitions of $\ell_1$ and $\eta_1$ it
can be shown that
(see inequality (6.18) of \cite{EYYrigid})%
\[
N \eta_1 (E_\zeta-E) \int_\mathbb{R}
\frac{1} {y^2 + \ell_1^2 } \Im m( E-y+i\ell_1) \,\mathrm{d}y\leq
N^{-2\varepsilon}.
\]
Now the lemma follows from (\ref{6-22}).
\end{pf}

Let $q\dvtx\mathbb{R}\to\mathbb{R}_+$ be a~smooth cutoff function such
that
\begin{eqnarray*}
q(x) &=& 1\qquad\mbox{if }|x| \le1/9,
\\
q(x) &=& 0\qquad\mbox{if }|x| \ge2/9
\end{eqnarray*}
and we assume that $q(x)$ is decreasing for $x\ge0$. Then we have the
following corollary for Lemma \ref{lem21} (which is the counterpart of
Corollary 6.2 in \cite{EYYrigid}):

%
%
\begin{corollary} \label{23} Let $\ell_1$ be as in Lemma \ref{lem21},
and set $\ell: = \frac{1}{2}\ell_1 N^{ 2\varepsilon} =
\frac{1}{2}N^{-2/3 -
\varepsilon}$. Then for all $E$ such that%
%
\begin{equation}
\label{condE-} |E-\lambda_+|\leq\varphi^{C_\zeta}N^{-2/3},
\end{equation}
where the constant $C_\zeta$ is as in (\ref{6-1})--(\ref{defEL}), the
inequality
%
%
\begin{equation}
\label{41new} \qquad \operatorname{Tr}\chi_{E+ \ell} \ast\theta_{\eta
_1}
(H) - N^{-\varepsilon} \le\mathcal{N}(E, \infty) \le\operatorname {Tr}
\chi_{E- \ell} \ast \theta_{\eta_1} (H) + N^{-\varepsilon}
\end{equation}
holds with $\zeta$-high probability. Furthermore, there exists $N_0
\in\mathbb{N}$ independent of $E$ such
that for all $N \geq N_0$,%
%
\begin{eqnarray}\label{67E}
&& \mathbb{E}q \bigl(\operatorname{Tr}\chi_{E-\ell} \ast
\theta_{\eta_1} (H) \bigr)
\nonumber\\[-8pt]\\[-8pt]
&&\qquad \le\mathbb{P} \bigl(\mathcal{N}(E, \infty) = 0
\bigr)
\le \mathbb{E}q \bigl(\operatorname{Tr}\chi_{E+\ell} \ast
\theta_{\eta_1} (H) \bigr) + Ce^{-\varphi^{C_\zeta}}.\nonumber
\end{eqnarray}
%
\end{corollary}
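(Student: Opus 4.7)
The plan is to establish \eqref{41new} first by a direct calculation of the smoothed indicator $\chi_{E\pm\ell}\ast\theta_{\eta_1}$ combined with the rigidity estimates \eqref{6-1}--\eqref{6-2}, and then deduce \eqref{67E} from \eqref{41new} via the properties of the cutoff $q$. Throughout, the decisive parameters are $\ell/\eta_1=N^{8\e}/2$ and $(\lambda_+-E_\zeta)/\eta_1\sim\varphi^{C_\zeta}N^{9\e}$, both enormous, so any arctan whose argument sits outside an $O(\eta_1)$ window of $0$ is within $O(\eta_1/|\cdot|)$ of $\pm\pi/2$. Using the antiderivative $\int\theta_{\eta_1}(u)\,\rd u=\pi^{-1}\arctan(u/\eta_1)$, I would write
\be
(\chi_{E\pm\ell}\ast\theta_{\eta_1})(\lambda)=\frac{1}{\pi}\qB{\arctan\frac{\lambda-E\mp\ell}{\eta_1}-\arctan\frac{\lambda-E_\zeta}{\eta_1}}.
\ee

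For the upper bound in \eqref{41new}, this identity shows $(\chi_{E-\ell}\ast\theta_{\eta_1})(\lambda)\geq 1-CN^{-8\e}$ uniformly for $\lambda\in[E,\lambda_++\varphi^{C_\zeta}N^{-2/3}]$. On the high-probability event of \eqref{6-1} all eigenvalues lie in this range; discarding the nonnegative contributions from $\lambda_j<E$ and using the count bound $\cN(E,\infty)\leq\varphi^{2C_\zeta}$ from \eqref{6-2} gives
\be
\tr\chi_{E-\ell}\ast\theta_{\eta_1}(H)\;\geq\;(1-CN^{-8\e})\cN(E,\infty)\;\geq\;\cN(E,\infty)-CN^{-8\e}\varphi^{2C_\zeta}\;\geq\;\cN(E,\infty)-N^{-\e}
\ee
for $\e$ small and $N$ large. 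For the lower bound in \eqref{41new}, I would split the trace as $\sum_{\lambda_j\in[E,E_\zeta]}+\sum_{\lambda_j<E}$ on the rigidity event (the piece $\lambda_j>E_\zeta$ is empty). The first sum is trivially $\leq\cN(E,\infty)$. For the second, the expansion $\arctan(-M)=-\pi/2+M^{-1}+O(M^{-3})$ gives, with $y_j\deq E-\lambda_j\geq0$ and $\Delta\deq E_\zeta-E\sim\varphi^{C_\zeta}N^{-2/3}$,
\be
(\chi_{E+\ell}\ast\theta_{\eta_1})(\lambda_j)\;\lec\;\frac{\eta_1\Delta}{(y_j+\ell)(y_j+\Delta)}.
\ee

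A pointwise bound $(\chi_{E+\ell}\ast\theta_{\eta_1})(\lambda_j)\lec\eta_1/\ell$ summed over $\sim N$ eigenvalues produces only a useless $N^{1-8\e}$; instead I would apply Cauchy--Schwarz together with the identity $\sum_j|z-\lambda_j|^{-2}=(N/\Im z)\Im m(z)$ evaluated at $z=E+i\ell$ and $z=E+i\Delta$, and invoke Theorem \ref{45-1} to get $\Im m\lec\varphi^{C_\zeta}N^{-1/3+\e}$ at both heights. Plugging in $\eta_1$, $\ell$, $\Delta$, a direct calculation yields a net bound $\lec\varphi^{C_\zeta'}N^{-8\e}\ll N^{-\e}$, closing the lower bound.

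Once \eqref{41new} is in hand, \eqref{67E} follows for $N\geq N_0$ with $N^{-\e}<1/9$. On the event $\{\cN(E,\infty)=0\}$ intersected with the high-probability event of the lower bound of \eqref{41new}, one has $\tr\chi_{E+\ell}\ast\theta_{\eta_1}(H)\leq N^{-\e}<1/9$, so $q(\tr\chi_{E+\ell}\ast\theta_{\eta_1}(H))=1$; taking expectation and absorbing the super-polynomially small $Ce^{-\varphi^{C_\zeta}}$ from the complementary event yields the right-hand inequality of \eqref{67E}. Symmetrically, on $\{\cN(E,\infty)\geq 1\}$ intersected with the high-probability event of the upper bound of \eqref{41new}, one has $\tr\chi_{E-\ell}\ast\theta_{\eta_1}(H)\geq 1-N^{-\e}>2/9$, so $q=0$, and taking expectation yields the left-hand inequality (the super-polynomially small contribution from the complementary event being absorbed). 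The main obstacle is the lower bound of \eqref{41new}: the tail weights $(\chi_{E+\ell}\ast\theta_{\eta_1})(\lambda_j)$ for $\lambda_j<E$ are individually tiny but there are of order $N$ of them, so any pointwise argument fails; the Cauchy--Schwarz estimate combined with the strong local Marcenko--Pastur law is exactly what buys the necessary quantitative gain.
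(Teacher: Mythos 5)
Your proposal is correct, but it reaches \eqref{41new} by a genuinely different route from the paper. The paper treats the corollary as a true corollary of Lemma \ref{lem:21}: it averages the bound \eqref{6.10} over $y\in[E-\ell,E]$, uses the monotonicity $\chi_y\le\chi_{E-\ell}$ under convolution, and kills the resulting error term $\frac{\ell_1}{\ell}\,\cN(E-2\ell,E+\ell)$ with the integrated rigidity estimate \eqref{res:rig2}, exploiting that $\ell_1/\ell=2N^{-2\e}$ and that the MP density integrates to $O(\varphi^{C_\zeta}N^{-1})$ over an edge window. You instead bypass Lemma \ref{lem:21} entirely and estimate $\tr\chi_{E\pm\ell}\ast\theta_{\eta_1}(H)$ directly from the arctan formula: the upper bound of \eqref{41new} follows from the pointwise lower bound $1-CN^{-8\e}$ on $[E,E_\zeta]$ together with \eqref{6-1}--\eqref{6-2}, and the lower bound from the tail estimate $\frac{\eta_1\Delta}{(y_j+\ell)(y_j+\Delta)}$ summed via Cauchy--Schwarz and the identity $\sum_j|z-\lambda_j|^{-2}=(N/\eta)\Im m(z)$ at the two heights $\ell$ and $\Delta$, where the local law gives $\Im m\lesssim\varphi^{C}N^{-1/3+\e}$; the resulting $\varphi^{C}N^{-15\e/2}\ll N^{-\e}$ closes the argument. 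Both approaches ultimately rest on the same inputs (edge rigidity plus the strong local MP law), but yours is more self-contained — it does not need \eqref{res:rig2} or the smoothing comparison of Lemma \ref{lem:21} — at the cost of redoing by hand the kernel estimates that the lemma packages. The deduction of \eqref{67E} from \eqref{41new} is the same in both. One small point: in the left inequality of \eqref{67E} the complement of the high-probability event cannot literally be ``absorbed'' since that inequality carries no additive error; strictly one obtains $\E\,q(\cdot)\le\P(\cN(E,\infty)=0)+\P(\Omega^c)$. The paper's own proof drops the same term silently, and the extra $e^{-\varphi^{c}}$ is harmless where the corollary is used, so this is an imprecision shared with the original rather than a gap in your argument.
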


\begin{pf} For any $E$ satisfying (\ref{condE-}) we have
$E_\zeta-E \gg\ell$ thus
$|E-\lambda_+-\ell|N^{2/3}\le\frac{3}{2}\varphi^{C_\zeta}$ [see
(\ref{E-2N})]; therefore (\ref{610}) holds for $E$ replaced with $y\in
[E-\ell, E]$\vadjust{\goodbreak} as well. We thus obtain
\begin{eqnarray*}
\operatorname{Tr}\chi_{E } (H) & \le& \ell^{-1} \int
_{E- \ell}^E \,\mathrm{d}y \operatorname{Tr}
\chi_y (H)
\\
& \le& \ell^{-1} \int_{E- \ell}^E
\,\mathrm{d}y \operatorname{Tr}\chi_y \ast\theta_{\eta_1} (H)
\\
&&{} + C\ell^{-1} \int_{E- \ell}^E \,\mathrm{d}y \bigl[ N^{-2\varepsilon} +
\mathcal{N}(y-\ell_1, y+ \ell_1) \bigr]
\\
& \le& \operatorname{Tr}\chi_{E- \ell} \ast\theta_{\eta_1} (H) +
CN^{- 2\varepsilon}+ C \frac{\ell_1}{\ell} \mathcal{N}(E- 2\ell, E + \ell)
\end{eqnarray*}
holds with $\zeta$-high probability. From (\ref{resrig2}),
(\ref{condE-}), $\ell_1/\ell=2N^{-2\varepsilon}$ and $ \ell\le
N^{-2/3}$, we gather that
\[
\frac{\ell_1}{\ell} \mathcal{N}(E- 2\ell, E+ \ell) \le N^{1-2\varepsilon} \int
_{E-2\ell}^{E+\ell}\varrho_c(x)\,\mathrm{d}x +
N^{-2\varepsilon
}(\log N)^{L_1} \le\frac{1}{2}N^{-\varepsilon}
\]
holds with $\zeta$-high probability, where we estimate the explicit
integral using the fact the integration domain is in
a~$CN^{-2/3}\varphi^{C_\zeta}$-vicinity of the edge at $\lambda_+$. We
have thus proved
\[
\mathcal{N}(E, E_\zeta) = \operatorname{Tr}\chi_{E } (H)
\le \operatorname{Tr} \chi_{E- \ell} \ast\theta_{\eta_1} (H) +
N^{-\varepsilon}.
\]
Using\vspace*{-1pt} (\ref{6-1}) we can replace $\mathcal{N}(E,
E_\zeta)$ by $\mathcal{N}(E, \infty)$ with a~change of probability of
at most $O(e^{-\varphi^{C_\zeta}})$. This proves the upper bound of
(\ref{41new}), and the lower bound can be proved similarly.

When event (\ref{41new}) holds, the condition $\mathcal{N}(E, \infty )
= 0$ implies that $\operatorname{Tr}\chi_{E+\ell} \ast\theta_{\eta_1}
(H) \leq1/9$.
Thus we have%
%
\begin{equation}
\label{Pleft} \mathbb{P} \bigl(\mathcal{N}(E, \infty)=0 \bigr)\leq \mathbb{P}
\bigl(\operatorname{Tr} \chi_{E+\ell} \ast\theta_{\eta_1} (H) \leq1/9
\bigr)+Ce^{-\varphi^{C_\zeta}}.
\end{equation}
Together with the Markov inequality, this proves the upper bound in
(\ref{67E}). For the lower bound, we use%
\begin{eqnarray*}
\mathbb{E}q \bigl(\operatorname{Tr}\chi_{E-\ell} \ast\theta
_{\eta_1} (H) \bigr) &\le& \mathbb{P} \bigl( \operatorname{Tr}
\chi_{E-\ell} \ast\theta_{\eta_1} (H)\le 2/9 \bigr)
\\
&\le&\mathbb{P} \bigl( \mathcal{N}(E,\infty)\le2/9+N^{-\varepsilon
} \bigr) =
\mathbb{P} \bigl( \mathcal{N}(E, \infty)=0 \bigr),
\end{eqnarray*}
where we used the upper bound from (\ref{41new}) and the fact that
$\mathcal{N}(E,\infty)$ is an integer. This completes the proof of
Corollary \ref{23}.
\end{pf}

\subsection{Green function comparison theorem}\label{secgreen}

Let $X^{\mathbf{v}} = [x^{\mathbf{v}} _{ij}]$, with the entries
$x^{\mathbf{v}}_{ij}$ satisfying (\ref{eqnXmat}) and
(\ref{eqnXmatexpbd}), $H^{\mathbf{v}} = {X^{\mathbf{v}}}^\dagger
X^{\mathbf{v}}$, and let $G^{\mathbf {v}}(z) =
({X^{\mathbf{v}}}^\dagger X^{\mathbf{v}} - z)^{-1} = (
H^{\mathbf{v}}-z)^{-1} $ be the Green function corresponding to
$X^{\mathbf{v}}$. Define the matrices $X^{\mathbf{w}}$, $H^{\mathbf
{w}}$ and the Green function $G^{\mathbf{w}}(z)$ analogously. Define
$m^{\mathbf{v}}(z) = {1 \over N} \operatorname{Tr} G^{\mathbf{v}}(z)$
and $m^{\mathbf{w}}(z) = {1 \over N} \operatorname
{Tr}G^{\mathbf{w}}(z)$. The operators $\mathbb{E}^{\mathbf{v}},
\mathbb{E}^{\mathbf{w}}$ denote the expectations under the
distributions of $X^{\mathbf{v}}$ and $X^{\mathbf{w}}$, respectively.

Also notice from (\ref{thetam}) that $\theta_\eta(H)= \frac{1}{\pi }
\Im m(i\eta)$. Corollary \ref{23} bounds the probability of
$\mathcal{N}(E,\infty)=0$ in terms of the expectations of two
functionals of Green functions. In this subsection, we show that the
difference between the expectations of these functionals with respect
to the two ensembles $X^{\mathbf{v}}$ and $X^{\mathbf{w}}$ is
negligible assuming their second moments match. The precise statement
is the following Green function comparison theorem on the edges. All
statements are formulated for the upper spectral edge $\lambda_+$, but
identical arguments hold for the lower spectral edge $\lambda_-$ as
well.

%
%
\begin{theorem}[(Green function comparison theorem on the edge)] \label{GFCT}
Let $F\dvtx\mathbb{R}\to\mathbb{R}$ be a~function whose derivatives
satisfy
%
%
\begin{equation}
\label{gflowder} \max_{x}\bigl|F^{(\alpha)}(x)\bigr| \bigl(|x|+1
\bigr)^{-C_1} \leq C_1,\qquad\alpha=1, 2, 3, 4
\end{equation}
with some constant $C_1>0$. Then there exists $\varepsilon_0>0$, $N_0
\in\mathbb{N}$ depending only on $C_1$ such that for any
$\varepsilon<\varepsilon_0$ and $N \geq N_0$ and for any real numbers
$E$, $E_1$ and $E_2$ satisfying
\[
|E-\lambda_+|\leq N^{-2/3+\varepsilon}, \qquad|E_1-\lambda_+|\leq
N^{-2/3+\varepsilon}, \qquad|E_2-\lambda_+|\leq N^{-2/3+\varepsilon}
\]
and $\eta= N^{-2/3-\varepsilon}$, we have%
%
\begin{equation}
\label{maincomp} \bigl|\mathbb{E}^\mathbf{v}F \bigl( N \eta\Im m^{\mathbf{v}}
(z) \bigr) - \mathbb{E}^\mathbf{w}F \bigl( N \eta\Im m^{\mathbf{w}} (z)
\bigr) \bigr| \le C N^{-1/6+C \varepsilon},\quad z=E+i\eta\hspace*{-35pt}
\end{equation}
and%
%
\begin{eqnarray}
\label{c1}
\qquad && \biggl|\mathbb{E}^\mathbf{v}F \biggl(N \int
_{E_1}^{E_2} \,\mathrm{d}y\, \Im m^{\mathbf{v}} (y +i
\eta) \biggr)-\mathbb {E}^\mathbf{w}F \biggl(N \int_{E_1}^{E_2}
\,\mathrm{d}y\, \Im m^{\mathbf{w}} (y+i\eta) \biggr) \biggr|
\nonumber\\[-16pt]\\[-2pt]
&&\qquad \leq C N^{-1/6+C \varepsilon}.\nonumber
\end{eqnarray}
\end{theorem}

Theorem \ref{GFCT} holds in much greater generality. We state the
following extension which can be used to prove (\ref{twa}), the
generalization of Theorem \ref{twthm}. The class of functions $F$ in
the following theorem can be enlarged to allow some polynomially
increasing functions similar to (\ref{gflowder}). But for our
application of the above theorem to prove (\ref{twa}), the following
form is sufficient.

%
%
\begin{theorem} \label{GFCT2}
Suppose that the assumptions of Theorem \ref{twthm} hold. Fix any
$k\in\mathbb{N}_+$ and let $F\dvtx \mathbb{R}^k \to\mathbb{R}$ be a
bounded smooth function with bounded derivatives. Then there exists
$\varepsilon_0>0$, $N_0 \in \mathbb{N}$ depending only on $C_1$ such
that for any $\varepsilon<\varepsilon_0$ and $N \geq N_0$, there exists
$\delta>0$ such that for any sequence of real numbers $E_k < \cdots<
E_{1}< E_0 $ with $|E_j-\lambda_+|\le N^{-2/3+\varepsilon}$,
$j=0,1,\ldots, k$, and $\eta= N^{-2/3 - \varepsilon}$ we have
%
%
\begin{eqnarray}
\label{c11}
\qquad && \biggl|\mathbb{E}^{\mathbf{v}} F \biggl(N \int
_{E_1}^{E_0} \,\mathrm{d}y\, \Im m (y +i\eta), \ldots, N
\int_{E_k}^{E_0} \,\mathrm{d}y\, \Im m (y+i\eta) \biggr)
\nonumber\\[-8pt]\\[-8pt]
&&\hspace*{172pt}{} - \mathbb{E}^{\mathbf{w}} F \bigl(m^\mathbf{v} \rightarrow
m^\mathbf{w} \bigr) \biggr| \leq N^{-\delta},\nonumber
\end{eqnarray}
where in the second term the arguments of $F$ are changed from
$m^\mathbf{v}$ to $m^\mathbf{w}$ and all other parameters remain
unchanged.
\end{theorem}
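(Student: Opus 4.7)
The plan is to upgrade the single-energy comparison scheme behind Theorem \ref{GFCT} to a multivariate setting by combining a vector-valued Taylor expansion of $F$ with a Lindeberg-type telescoping over the \emph{columns} of the data matrix. First I would enumerate the columns and define, for $0 \le \gamma \le N$, an interpolating matrix $Y_\gamma$ whose first $\gamma$ columns are sampled according to $\P^{\bv}$ and whose remaining $N-\gamma$ columns are sampled according to $\P^{\bw}$, with all columns mutually independent. Writing
\be\label{eqn:Phidef}
\Phi(Y) \deq \Big( N\int_{E_1}^{E_0}\Im\, m_Y(y+i\eta)\, \rd y,\; \ldots,\; N\int_{E_k}^{E_0}\Im\, m_Y(y+i\eta)\, \rd y \Big) \in \R^k,
\ee
where $m_Y$ is the Stieltjes transform associated with $Y$, we have $Y_0 = X^{\bw}$ and $Y_N = X^{\bv}$, so the left-hand side of \eqref{c11} telescopes as $\sum_{\gamma=1}^N \big( \E F(\Phi(Y_\gamma)) - \E F(\Phi(Y_{\gamma-1})) \big)$, and it suffices to show that this sum is $O(N^{-\delta})$ for some $\delta > 0$.

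For a single column swap at index $\gamma$, I would condition on all columns of $Y_\gamma$ other than the $\gamma$-th and treat the $\gamma$-th column $\b y_\gamma$ as the random perturbation. Using the resolvent identities of Lemma \ref{lemma: resolvent id}, the Stieltjes transform $m_Y(y+i\eta)$ can be expanded around its value with the $\gamma$-th column removed, yielding a series in quadratic forms $\langle \b y_\gamma, B\, \b y_\gamma\rangle$ whose coefficient matrix $B$ is a polynomial in $\mG^{(\gamma)}(y+i\eta)$ and is measurable with respect to the other columns. Integrating in $y$ over $[E_j,E_0]$ and substituting into a multivariate Taylor expansion of $F$ around its value at the $\gamma$-removed argument expresses each summand of the telescope as a finite sum of expectations of polynomials in the entries of $\b y_\gamma$, with bounded coefficients. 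Matching of the first two moments of $\b y_\gamma$ between the $\bv$ and $\bw$ ensembles (both satisfy \eqref{eqn:Xmat}) ensures that the zeroth-, first-, and second-order terms in this expansion cancel between the two ensembles, so only third- and higher-order contributions survive.

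The residual third- and fourth-order terms are bounded by $M^{-3/2}$ or $M^{-2}$ times norms of products of $\mG^{(\gamma)}$ matrix elements. From the strong local Marchenko--Pastur law (Theorem \ref{45-1}) at $\eta = N^{-2/3-\e}$ near the edge $\lambda_+$, the relevant sums of matrix elements are $O(\varphi^{C_\zeta})$ with $\zeta$-high probability, and the integrals of $\Im m(y+i\eta)$ over intervals of length $O(N^{-2/3+\e})$ are $O(N^{-1/3+C\e})$. Substituting these bounds, each per-swap contribution is at most $O(N^{-4/3+C\e})$, and summing over the $N$ columns yields an overall $O(N^{-1/3+C\e})$ bound, which for sufficiently small $\e$ is $O(N^{-\delta})$ for some $\delta > 0$.

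The main obstacle is the one highlighted in the introduction: the entries of the covariance matrix $H = X^\dagger X$ within a given row or column are not independent, since they are all bilinear in the same column vector $\b x_i$ of $X$. This rules out the entrywise Lindeberg replacement used in the Wigner case and forces us to swap entire columns of $X$ at once, which in turn requires a resolvent expansion in the \emph{vector} $\b y_\gamma$ rather than in a scalar matrix entry, together with sharp a priori bounds on $\mG^{(\gamma)}$. These a priori bounds --- delocalization \eqref{4444}, the entrywise Green function estimates \eqref{Lambdaofinal}, and the large deviation estimates of Lemma \ref{lem:largdev} --- are precisely what Theorems \ref{45-1}--\ref{45-2} supply. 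Given them, the multivariate extension to arbitrary $k$ and bounded smooth $F$ is essentially bookkeeping: the smoothness of $F$ allows a multivariate Taylor expansion of fixed order whose remainder is controlled by bounded derivatives of $F$ times small powers of $N^{-\delta}$-size quantities, while a standard Riemann-sum discretization at scale $\eta$ handles the passage from pointwise to integrated estimates of $\Im m(y+i\eta)$.
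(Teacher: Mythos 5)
Your proposal is correct and follows essentially the same route as the paper: the paper proves Theorem \ref{GFCT2} by noting it is the multivariate analogue of Theorem \ref{GFCT}, whose proof is exactly your column-by-column Lindeberg telescoping with a resolvent expansion in the swapped column $\b x_\gamma$, a (here multivariate) Taylor expansion of $F$, cancellation of all terms depending only on the first two moments of the entries, and error control via the strong local Marchenko--Pastur bounds on $\mG^{(\gamma)}$ and $\langle \bx_\gamma,(\mG^{(\gamma)})^2\bx_\gamma\rangle$. Your per-swap error accounting differs slightly in the exponent ($N^{-4/3+C\e}$ versus the paper's stated $N^{-7/6+C\e}$ in Lemma \ref{lem:greenfned}), but both telescope to a negative power of $N$, so the conclusion is unaffected.
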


\begin{pf}
The proof of Theorem \ref{GFCT2} is similar to that of Theorem
\ref{GFCT} and will be omitted.
\end{pf}

Before proceeding further, let us state the following theorem which
gives sufficient criteria for proving edge universality\vadjust{\goodbreak} for matrix
ensembles of the form ${Y}^\dagger Y$ for various types of data
matrices $Y$. Let $Y_{M \times N} = [y_{ij}], Z_{M\times N} = [z_{ij}]$
be two matrix ensembles, and set $H^Y = {Y}^\dagger Y, H^Z =
{Z}^\dagger Z$. Define the corresponding Green functions $G^Y = (H^Y -
z)^{-1}, G^{Z} = (H^Z - z)^{-1}$ and denote their respective empirical
Stieltjes transforms by $m^Y$, $m^Z$.
%
%
\begin{theorem} \label{thmsuffedgeuni}
Assume that the matrices $Y, Z$ satisfy the conclusions stated in items
\textup{(i)}, \textup{(ii)} and \textup{(iii)} of Theorem \ref{451}.
Furthermore, assume that $m^Y$ and $m^Z$ satisfy the conclusions of
Theorems \ref{GFCT} and \ref{GFCT2}. Then the asymptotic eigenvalue
distribution of the matrices $H^Y,H^Z$ at the edge are identical; that
is, the conclusions of Theorem \ref{twthm} are satisfied with
$X^{\mathbf{v}} = Y$ and $X^{\mathbf{w}}= Z$.
\end{theorem}

%
%
\begin{remark}
Thus our results can be used to show edge universality for cases far
beyond covariance matrices. In \cite{PillYin1102} we use Theorem
\ref{thmsuffedgeuni} to prove the edge universality of correlation
matrices.
\end{remark}

\begin{pf*}{Proof of Theorem \ref{thmsuffedgeuni}}
An inspection of the proofs will reveal that, for the arguments used in
our application of the Green function comparison method to go through,
all we need are the strong MP law and the rigidity of eigenvalues
[items~\textup{(i)}, \textup{(ii)} and \textup{(iii)} of
Theorem~\ref{451}] and Theorems~\ref{GFCT} and~\ref{GFCT2}.
\end{pf*}

Recall that all discussion so far in this section has been under the
assumption that $\lim_{N \to\infty} d_N \in(0,\infty) \setminus \{1\}$.
Now we first prove Theorem \ref{twthm} when $\lim_{N \to \infty} d_N
\in(0,\infty) \setminus\{1\}$, assuming that Theorem \ref{GFCT} holds
and then give the proof of Theorem \ref{GFCT}. Finally we return to
prove Theorem \ref{twthm} for $\lim_{N \to\infty}  d_N = 1$ at the end
of this section.

\begin{pf*}{Proof of Theorem \ref{twthm} for the case $\lim_{N \to
\infty } d_N = (0,\infty) \setminus\{1\}$}
Define $ E_\zeta$ as in (\ref{defEL}) with a~constant $C_\zeta$ such
that (\ref{6-1}) and (\ref{6-2}) hold. Therefore we can assume that
(\ref{25}) holds for the parameter $s$. Let $E:=\lambda_++sN^{-2/3}$ so
that $|E-\lambda_+|\leq\varphi^{C_\zeta}N^{-2/3}$. Using (\ref{67E}),
for
any sufficiently small $\varepsilon>0$, we have%
\[
\mathbb{E}^\mathbf{w}q \bigl(\operatorname{Tr}\chi_{E-\ell} \ast
\theta_{\eta_1} (H) \bigr) \le\mathbb{P}^\mathbf{w} \bigl(
\mathcal{N}(E, \infty) = 0 \bigr)
\]
with
\[
\ell: = \tfrac{1}{2}N^{-2/3-\varepsilon}, \qquad\eta_1:=
N^{-2/3-9\varepsilon}.
\]
Recall that by definition
\[
\operatorname{Tr}\chi_{E-\ell} \ast\theta_{\eta_1} (H)=N
\frac
{1}\pi \int_{E-\ell}^{E_\zeta}\Im m(y+i
\eta_1 )\,\mathrm{d}y.
\]
Bound (\ref{c1}) applied to the case $E_1=E-\ell$ and $E_2= E_\zeta$
shows that there exists $\delta>0$, such that
%
%
\begin{equation}
\label{645} \mathbb{E}^\mathbf{v}q \bigl(\operatorname{Tr}\chi
_{E-\ell} \ast \theta_{\eta_1} (H) \bigr)\leq\mathbb{E}^\mathbf{w}q
\bigl(\operatorname{Tr} \chi_{E-\ell} \ast\theta_{\eta_1} (H)
\bigr)+N^{-\delta}.\vadjust{\goodbreak}
\end{equation}
%
Then applying the right-hand side of (\ref{67E}) in Lemma \ref{23} to
the left-hand side of~(\ref{645}), we have
\[
\mathbb{P}^\mathbf{v} \bigl(\mathcal{N}(E-2\ell, \infty) = 0 \bigr) \leq
\mathbb{E}^\mathbf{v}q \bigl(\operatorname{Tr}\chi_{E-\ell} \ast
\theta_{\eta_1} (H) \bigr)+C\exp{ \bigl(-c\varphi^{O(1)} \bigr)}.
\]
Combining these inequalities, we have
%
%
\begin{equation}
\label{Pbv} \mathbb{P}^\mathbf{v} \bigl(\mathcal{N}(E- 2\ell, \infty)
= 0 \bigr) \leq\mathbb{P} ^\mathbf{w} \bigl(\mathcal{N}(E, \infty) = 0
\bigr)+2N^{-\delta}
\end{equation}
for sufficiently small $\varepsilon>0$ and sufficiently large $N$.
Recalling that $E= \lambda_++sN^{-2/3}$, this proves the first
inequality of (\ref{tw}) and, by switching the roles of $\mathbf{v},
\mathbf{w}$, the second inequality of (\ref{tw}) as well. This
completes the proof of Theorem \ref{twthm}.
\end{pf*}

\begin{pf*}{ Proof of Theorem \ref{GFCT}}
We need to compare the matrices $H^{\mathbf{v}}$ and $H^\mathbf{w}$.
Instead of replacing the matrix elements one by one ($NM$ times) and
comparing their successive differences, the \emph{key new idea} here is
to estimate the successive difference of matrices which differ by
a~column. Indeed for $1\leq\gamma\leq N$, denote by $X_\gamma$ the
random matrix whose $j$th column is the same as that of $X^{\mathbf
{v}}$ if $j < \gamma$ and that of $X^{\mathbf{w}}$ otherwise; in
particular $X_0 = X^{
\mathbf{v}}$ and $X_{N} = X^{\mathbf{w}}$. As before, we define%
\[
H_\gamma=X_\gamma^\dagger X_\gamma.
\]
We will compare $H_{\gamma-1}$ with $H_\gamma$ using the following
lemma.
For simplicity, we denote
\[
\widetilde m{}^{(i)}(z)=m^{(i)}(z)-(Nz)^{-1}.
\]

%
\begin{lemma} \label{lemgreenfned}
For any random matrix $X$ whose entries satisfy (\ref{eqnXmat}) and
(\ref{eqnXmatexpbd}), if $|E-\lambda_+|\leq N^{-2/3+\varepsilon}$ and
$N^{-2/3}\gg\eta\geq N^{-2/3-\varepsilon}$ for some $\varepsilon>0$,
then we have
%
%
\begin{equation}
\label{wdb}
\qquad\qquad \mathbb{E}F \bigl( N\eta\Im m(z) \bigr) - \mathbb{E}F \bigl( N
\eta\Im\widetilde m{}^{(i)}(z) \bigr)= A \bigl(X^{(i)},
m_1, m_2 \bigr)+N^{-7/6+C\varepsilon},
\end{equation}
where the functional $A(X^{(i)}, m_1, m_2 )$ depends only on the
distribution of $X^{(i)}$ and the first two moments $m_1, m_2 $ of
$\sqrt M x_{ji}=\sqrt M (X)_{ji}$ $(1\leq j\leq M)$.
\end{lemma}

Notice that $X_\gamma^{(\gamma)}$ is equal to
$X_{\gamma-1}^{(\gamma)}$. We also have that the first two moments of
the entries of $X^{\mathbf{v}}$ and $X^{\mathbf{w}}$ are identical.
Thus Lemma
\ref{lemgreenfned} implies that%
%
\begin{equation}
\label{ygz3} \qquad\mathbb{E}F \biggl( \eta\Im\operatorname{Tr}\frac
{1} { H_{\gamma-1}-z}
\biggr) - \mathbb{E}F \biggl( \eta\Im\operatorname{Tr}{1 \over H_\gamma- z}
\biggr)= O \bigl(N^{-7/6+C\varepsilon} \bigr).
\end{equation}
%
Now the proof of Theorem \ref{GFCT} now can be completed via a~simple
telescoping argument.
Thus to finish the proof of Theorem \ref{GFCT}, all that needs to be
shown is Lemma \ref{lemgreenfned} which is proven below.
\end{pf*}
\begin{pf*}{Proof of Lemma \ref{lemgreenfned}}
Fix $\zeta>0$, $\varepsilon> 0$ and, without loss of generality, assume
that $i=1$. Recall that $N^{-2/3} \gg\eta\geq N^{-2/3 - \varepsilon}$
and $|E - \lambda_+| \leq N^{-2/3 + \varepsilon}$. First, we claim the
following bounds for $G^{(1)}$ and $\mathcal{G}^{(1)}$:
%
%
\begin{eqnarray}
\label{af1} \bigl| \bigl\langle\mathbf{x}_{1 }, \bigl(
\mathcal{G}^{(1)}(z) \bigr)^2 \mathbf{x}_1 \bigr
\rangle \bigr|&\leq& N^{1/3+C\varepsilon}, \qquad z = E + i\eta
\\
\bigl| \bigl[\mathcal{G}^{(1)}(z) \bigr]_{ij}
\bigr|&\leq& N^{ C\varepsilon},
\nonumber\\[-8pt]\label{af2} \\[-8pt]
\bigl| \bigl[ \bigl[ \mathcal{G}^{(1)}(z)
\bigr]^2 \bigr]_{ij} \bigr|&\leq& N^{ 1/3+C\varepsilon}, \qquad z
= E + i \eta\nonumber
\end{eqnarray}
with $\zeta$-high probability for some $C>0$. In the above, $\mathbf
{x}_1$ denotes the first column of the matrix $X$. In (\ref{af2}) we
allow $i = j$. The proof of these bounds is postponed to the end.

Now using (\ref{eqnGii}) and (\ref{eqnGijGijk}), we have
%
%
\begin{eqnarray}
\operatorname{Tr}G -\operatorname{Tr}G^{(1)} +z^{-1}& =&
\bigl(G_{11}+z^{-1} \bigr)+\frac{
\langle\mathbf{x}_{1 }, X^{(1)}G ^{(1)} G ^{(1)}X^{(1)\dagger}
\mathbf{x}_1\rangle}{- z - z ( \mathbf{x}_1, \mathcal{G}^{(1)}(z)
\mathbf{x}_1)}
\nonumber\\[-8pt] \label{26mm}\\[-8pt]
&=& z
G_{11} \bigl\langle\mathbf{x}_{1 }, \bigl(
\mathcal{G}^{(1)} \bigr)^2 (z) \mathbf{x}_1
\bigr\rangle.\nonumber
\end{eqnarray}
Define the quantity $B$ to be
%
%
\begin{equation}
\label{215a} B =-z m_c \biggl[ \bigl\langle\mathbf{x}_1,
\mathcal{G}^{(1)}(z) \mathbf{x}_1 \bigr\rangle- \biggl(
\frac
{-1}{z m_c(z)}-1 \biggr) \biggr].
\end{equation}
By (\ref{eqnGii}),
\[
B =-z m_c \biggl[ \biggl(\frac{-1}{z G_{11}(z)}-1 \biggr)- \biggl(
\frac
{-1}{z m_c(z)}-1 \biggr) \biggr]=\frac{m_c-G_{11}}{G_{11}}.
\]
From (\ref{Lambdaofinal}),
we obtain that
%
%
\begin{equation}
\label{216a} | B| \leq N^{-1/3+2\varepsilon}\ll1,
\end{equation}
with $\zeta$-high probability. Therefore, we have the identity
%
%
\begin{equation}
\label{217a} G_{11}=\frac{m_c}{ B+1}=m_c\sum
_{k\geq0 }(-B)^k.
\end{equation}

Define $y$ with the left-hand side of (\ref{26mm}),%
%
\begin{equation}
y :=\eta \bigl( \operatorname{Tr}G - \operatorname{Tr}G^{(1)} +
z^{-1} \bigr) \label{defy} 
\end{equation}
so that we have%
%
\begin{equation}
\label{eqndefutily} N\eta\Im m(z) = N\eta\Im\widetilde m{}^{(1)}(z) + y.
\end{equation}
Using
(\ref{26mm}) and (\ref{217a}) we obtain%
\[
y =\eta z G_{11} \bigl\langle\mathbf{x}_{1 }, \bigl(
\mathcal{G}^{(1)} \bigr)^2 \mathbf{x}_1 \bigr
\rangle= \sum_{k=1}^\infty y_k,
\qquad y_k:=\eta z m_c(-B)^{k-1} \bigl\langle
\mathbf{x}_{1 }, \bigl( \mathcal{G}^{(1)} \bigr)^2
\mathbf{x}_1 \bigr\rangle.
\]

Since $z$ and $m_c$ are $O(1)$, together with (\ref{af1}) and
(\ref{216a}) we see that the bounds
%
%
\begin{equation}
\label{boundyk} |y_{k}|\leq O \bigl(N^{-k/3+C\varepsilon
} \bigr)\quad
\mbox{and}\quad|y|\leq O \bigl(N^{-1/3+C\varepsilon
} \bigr)
\end{equation}
hold with $\zeta$-high probability. Consequently, using (\ref
{eqndefutily}), the expansion
%
%
\begin{eqnarray}
\label{jkall} && F \bigl( N\eta\Im m(z) \bigr) - F \bigl( N\eta \Im\widetilde
m{}^{(1)}(z) \bigr)
\nonumber\\[-8pt]\\[-8pt]
&&\qquad= \sum_{k=1}^3
{1 \over k!} F^{(k)} \bigl( N\eta\Im\widetilde
m{}^{(1)}(z) \bigr) (\Im y)^k+O \bigl(N^{-4/3+C\varepsilon} \bigr)\nonumber
\end{eqnarray}
holds with $\zeta$-high probability.

Now we estimate each of the three terms ($k=1,2,3$) on the right-hand
side of~(\ref{jkall}) individually. First, using (\ref{boundyk}) we
obtain that
%
%
\begin{equation}
\label{bdrd}  F^{(3)} \bigl( N\eta\Im\widetilde m{}^{(1)}(z)
\bigr) ( \Im y)^3 =F^{(3)} \bigl( N\eta\Im\widetilde
m{}^{(1)}(z) \bigr) (\Im y_1)^3+O
\bigl(N^{-4/3+C\varepsilon} \bigr)\hspace*{-35pt}
\end{equation}
holds with $\zeta$-high probability. Moreover, we
have%
%
\begin{eqnarray}
\label{eqnsumedgesep}\quad  \mathbb{E}_1(\Im y_1)^3 &=&
\mathbb{E}_1 (\eta z m_c)^3 \bigl\langle
\mathbf{x}_{1 }, \bigl( \mathcal{G}^{(1)} \bigr)^2
\mathbf{x}_1 \bigr\rangle^3
\nonumber\\[-8pt]\\[-8pt]
&=& (\eta z m_c)^3\sum_{k_1, \ldots, k_6 =
1}^M
\mathbb{E}_1 \Biggl(\prod_{i=1}^6x_{k_i1}
\Biggr) \prod_{i=1}^3 \bigl[ \bigl(
\mathcal{G}^{(1)} \bigr)^2 \bigr]_{k_{2i-1}, k_{2i} },\nonumber
\end{eqnarray}
where $\mathbb{E}_1$ is the expectation value with respect to $\mathbf
{x}_1$, the first column of $X$. Recall that $m_k$ denotes the $k$th
moment of $\sqrt M x_{j1}$. If there is an index $k_i $ which is
different from all the others in the product $\prod_{i=1}^6x_{k_i1}$,
then
\[
\mathbb{E}_1 \Biggl(\prod_{i=1}^6x_{k_i1}
\Biggr) = 0=m_1
\]
and if each $k_i$
appears exactly twice, then%
\[
\mathbb{E}_1 \Biggl(\prod_{i=1}^6x_{k_i1}
\Biggr) =m_2^3.
\]
Isolating the above two cases from the sum (\ref{eqnsumedgesep}),
we have%
\begin{eqnarray*}
\mathbb{E}_1(\Im y_1)^3&=&\widetilde A_3 \bigl(X^{(1)}, m_1, m_2
\bigr)
\\
&&{}+(\eta z m_c)^3\sum_{\mathcal{A} } \mathbb{E}_1
\Biggl(\prod_{i=1}^6x_{k_i1} \Biggr) \bigl[ \bigl(\mathcal{G}^{(1)}\bigr)^2
\bigr]_{k_1k_2} \bigl[ \bigl(\mathcal{G}^{(1)}\bigr)^2 \bigr]_{k_3k_4} \bigl[
\bigl(\mathcal{G}^{(1)}\bigr)^2 \bigr]_{k_5k_6},
\end{eqnarray*}
where $\mathcal{A}$ denotes the set of indices $k_i \in\{1,2,\ldots,
M\}$ such that (1) no $k_i$ appears exactly once in the product
$\prod_{i=1}^6x_{k_i1}$ and (2) there is an\vspace*{1pt} index $k_i$
which appears at least three times. Clearly, the functional $\widetilde
A_3(X^{(1)}, m_1, m_2)$ depends only on $X^{(1)}$, $m_1$ and
$m_2$. Furthermore, it readily follows that%
\[
\#\mathcal{A} \leq CN^2.
\]
Then using (\ref{af2}) and the bounds on $m_k$'s, it follows that
%
%
\begin{equation}
\label{rb55} \mathbb{E}_1(\Im y_1)^3=
\widetilde A_3 \bigl(X^{(1)}, m_1,
m_2 \bigr)+O \bigl(N^{-2+C\varepsilon} \bigr).
\end{equation}
It is easy to prove that $|N\eta\Im\widetilde m{}^{(1)}|\leq
N^{C\varepsilon}$ with $\zeta$-high probability. Using (\ref{bdrd}) and
the fact that $ \widetilde m{}^{(1)}$ depends only on $X^{(1)}$, we
have
%
%
\begin{equation}
\label{jk3} \quad \mathbb{E}F^{(3)} \bigl( N\eta\Im\widetilde m{}^{(1)}(z) \bigr) (\Im y)^3 =A_3
\bigl(X^{(1)}, m_1, m_2 \bigr)+O
\bigl(N^{-4/3+C\varepsilon} \bigr),
\end{equation}
where $ A_3(X^{(1)}, m_1, m_2)$ depends only on the distribution of
$X^{(1)}$, $m_1$ and~$m_2$.

Now we estimate the term with $F^{(2)}$ in (\ref{jkall}). As in
(\ref{bdrd}), we have
%
%
\begin{eqnarray}\label{bdrd20}
\qquad && F^{(2)} \bigl( N\eta\Im\widetilde m{}^{(1)}(z)
\bigr) (\Im y)^2
\nonumber\\[-8pt]\\[-8pt]
&&\qquad  =F^{(2)} \bigl( N\eta\Im\widetilde m{}^{(1)}(z) \bigr) \bigl[(\Im y_1)^2+2(\Im
y_1) (\Im y_2) \bigr]+O \bigl(N^{-4/3+C\varepsilon} \bigr).\nonumber
\end{eqnarray}
By definition,%
\begin{eqnarray*}
&& \mathbb{E}_1(\Im y_1) ^2+2(\Im
y_1) ( \Im y_2)
\\
&&\qquad =C_1(z)\eta^2
\bigl\langle \mathbf{x}_{1
}, \bigl( \mathcal{G}^{(1)} \bigr)
\mathbf{x}_1 \bigr\rangle \bigl\langle\mathbf{x}_{1 }
\bigl( \mathcal{G}^{(1)} \bigr)^2 \mathbf{x}_1
\bigr\rangle^2+C_2(z) \eta^2 \bigl\langle
\mathbf{x}_{1 } \bigl( \mathcal{G}^{(1)} \bigr)^2
\mathbf{x}_1 \bigr\rangle^2,
\end{eqnarray*}
where $C_1(z)$, $C_2(z)=O(1)$ are constants which depend only on $z$
and $m_c(z)$. Using the bounds on $\mathcal{G}^{(1)}$ in (\ref{af2}),
as in (\ref{rb55}), we have
\[
\mathbb{E}_1 \bigl[(\Im y_1) ^2+(\Im
y_1) (\Im y_2) \bigr]=\widetilde A_{2 }
\bigl(X^{(1)}, m_1, m_2 \bigr)+O
\bigl(N^{-5/3+C\varepsilon} \bigr),
\]
where $ \widetilde A_2(X^{(1)}, m_1, m_2)$ depends only on the
distribution of $X^{(1)}$, $m_1$ and $m_2$. Then with (\ref{bdrd20}),
as in (\ref{jk3}), we conclude that
%
%
\begin{equation}
\label{jk2} \quad\mathbb{E}F^{(2)} \bigl( N\eta\Im\widetilde m{}^{(1)}(z) \bigr) (\Im y )^2=A_2
\bigl(X^{(1)}, m_1, m_2 \bigr)+O
\bigl(N^{-4/3+C\varepsilon} \bigr)
\end{equation}
for some functional $A_2$ which depends only on the distribution of
$X^{(1)}$, $m_1$ and~$m_2$.

Finally we estimate the term $F^{(1)}$ in (\ref{jkall}). As in
(\ref{bdrd}), we have%
%
\begin{eqnarray}\label{bdrd2}
&& F^{(1)} \bigl( N\eta\Im\widetilde m{}^{(1)}(z)
\bigr) (\Im y)^2
\nonumber\\[-8pt]\\[-8pt]
&&\qquad  =F^{(1)} \bigl( N\eta\Im\widetilde m{}^{(1)}(z) \bigr) [ \Im y_1 + \Im y_2 +\Im
y_3 ]+O \bigl(N^{-4/3+C\varepsilon} \bigr).\nonumber
\end{eqnarray}
A similar argument as in (\ref{jk2}) and (\ref{jk3}) yields
%
%
\begin{equation}
\label{jk1} \mathbb{E}F^{(1)} \bigl( N\eta\Im\widetilde m{}^{(1)}(z) \bigr) (\Im y) =A_1 \bigl(X^{(1)},
m_1, m_2 \bigr)+O \bigl(N^{-4/3+C\varepsilon} \bigr).
\end{equation}
Inserting (\ref{jk1}), (\ref{jk2}) and (\ref{jk3}) into (\ref{jkall}),
we obtain (\ref{wdb}). Now to complete the proof of Lemma
\ref{lemgreenfned} we need to prove (\ref{af1}) and (\ref{af2}).

For (\ref{af1}), using the large deviation lemma (Lemma
\ref{lemlargdev}), we obtain that for any $\zeta>0$,%
%
\begin{eqnarray}
\label{eqnrigidmGcalc}
\bigl| \bigl( \mathbf{x}_{1 } \bigl( \mathcal{G}^{(1)}
\bigr)^2 \mathbf{x}_1 \bigr)\bigr|& \leq&
\varphi^{C_\zeta
} \bigl(N^{-1}\operatorname{Tr}\bigl|
\mathcal{G}^{(1)} \bigr|^4 \bigr)^{1/2}\nonumber
\\
& \leq&
\varphi^{C_\zeta} \biggl(\frac
{1}{N}\sum_{\alpha}
\frac{1}{|\lambda^{(1)}_\alpha-z|^4} \biggr)^{1/2}
\nonumber\\[-8pt]\\[-8pt]
&\leq& \varphi^{C_\zeta} \biggl(\frac{1}{N\eta^2}\sum
_{\alpha}\frac{1}{|\lambda
^{(1)}_\alpha-z|^2} \biggr)^{1/2}\nonumber
\\
&=& \varphi^{C_\zeta} \biggl(\frac{1}{N\eta ^3} \Im m^{(1)}(z)\biggr)^{1/2}\nonumber
\end{eqnarray}
with $\zeta$-high probability. Then using (\ref{Lambdafinal}) 
we have (\ref{af1}). For (\ref{af2}), we note that
\[
\mathcal{G}^{(1)}=\frac{1}{X^{(1)}(X^{(1)})^\dagger-z}.
\]
Comparing with (\ref{eqngreen}), we see that the pair $\{\mathcal{G}
^{(1)},(X^{(1)})^\dagger\} $ plays the role of $\{G, X \}$. Since
$\sqrt{\frac{M }{N-1}}(X^{(1)})^\dagger$ is just an $(N-1)\times M$
random data matrix, whose entries have variance $(N-1)^{-1}$, the
results in (\ref{Lambdaofinal}) also hold for $\mathcal{G}^{(1)}$ with
slight changes. One can easily obtain that
\[
\max_{ij}\bigl| \bigl[ \mathcal{G}^{(1)}
\bigr]_{ij}\bigr| \leq C, \qquad\max_{i \neq j} \bigl| \bigl[
\mathcal{G}^{(1)} \bigr]_{ij}\bigr|\leq CN^{-1/3+C\varepsilon}
\]
with $\zeta$-high probability showing (\ref{af2}) and finishing the
proof of Lemma \ref{lemgreenfned} and consequently we have proved
Theorem \ref{GFCT}.
\end{pf*}
\begin{pf*}{Proof of Theorem \ref{twthm} for the case $\lim_{N \to
\infty } d_N = 1$}
Note that this proof holds only for the largest eigenvalues. Without
loss of generality, set $1/2\leq d_N\leq2$. First, in the proof of
estimates in (\ref{Lambdafinal}) and (\ref{Lambdaofinal}) of $m(z)$, we
never used the assumption $\lim_{N \to\infty} d_N\neq1$ directly. We
only needed the property of $m_c(z)$ listed in Lemma \ref{lemmw}.
One can easily check that if $\Re z\ge\varepsilon$ for some constant
$\varepsilon$ independent of $N$, then $m_c(z)$ also satisfies the
properties in Lemma \ref{lemmw}, even if $\lim_{N \to\infty} d_N=1$.
Therefore for any fixed $\varepsilon>0$, expressions
(\ref{Lambdafinal}) and (\ref{Lambdaofinal}) still hold with $\zeta
$-high probability if we replace $\bigcap_{z \in{ \mathbf{S}}(C_\zeta)}
$ with $\bigcap_{z \in{ \mathbf{S}}(C_\zeta)\ \mathrm{and}\ \Re
z\ge\varepsilon} $.

Next, in step 1 in the proof of (\ref{443}), using the estimate of
$m(z)$ from (\ref{Lambdafinal}) and~(\ref{Lambdaofinal}), and
properties on $m_c(z)$ in Lemma \ref{lemmw}, we obtain that for any
$\zeta>0$, there exists some $D_\zeta>0$ such that%
%
\begin{equation}
\label{eqnmaxlj} \max\{\lambda_j\dvtx \lambda_j\leq5
\lambda_+ \} \leq\lambda_++N^{-2/3}\varphi^{4D_\zeta}
\end{equation}
holds with $\zeta$-high probability. In the proof, we used only the
estimates of $m(z)$ from (\ref{Lambdafinal}) and (\ref{Lambdaofinal})
for $z \in{ \mathbf{S}}(C_\zeta)$ and $ \Re z\in[\lambda_+, 5\lambda_+]
$. Now,\vadjust{\goodbreak} using our modified version of (\ref{Lambdafinal}) and
(\ref{Lambdaofinal}) (obtained by replacing $\bigcap_{z \in{ \mathbf
{S}}(C_\zeta)}$ with $\bigcap_{z \in{ \mathbf{S}}(C_\zeta)\
\mathrm{and}\  \Re z\ge\varepsilon} $), (\ref{eqnmaxlj}) can be easily
extended to the case \mbox{$\lim_{N \to\infty}  d_N=1$}.

Now,\vspace*{1pt} we claim that when $\lim_{N \to\infty}  d_N=1$,
$\lambda _1\leq 5\lambda_+$ holds with $\zeta$-high probability. The $M
\times N$ data matrix can be considered as a~minor of a~matrix
$\widetilde X$, which (1) is an $M\times\widetilde N$ matrix with
$\lim_{\widetilde N \to\infty} \widetilde N/M\ge1+c$ for some fixed
$c>0$, (2)~satisfies the condition of Theorem \ref{thmmain}. Let
$\lambda_1$, $\widetilde\lambda_1$ be the largest eigenvalue of
$X^\dagger X $ and $\widetilde X^\dagger\widetilde X $. By definition
and Theorem \ref{thmmain}, for small enough $c$ we have
\[
\lambda_1\leq\widetilde\lambda_1\leq5 \lambda_+.
\]

Combining the above two statements we obtain that for any $\zeta>0$,
there exists some $D_\zeta>0$ such that with $\zeta$-high probability
%
%
\begin{equation}
\label{jjp} \lambda_1\leq\lambda_++N^{-2/3}
\varphi^{4D_\zeta}.
\end{equation}

Likewise, step 2 [formula (\ref{fixE1E2})] in the proof of (\ref{443})
can also be extended to
%
%
\begin{eqnarray}
\bigl| \bigl({\mathfrak n}(E_1)-{\mathfrak n}(E_2) \bigr)-
\bigl( n_c(E_1)-n_c(E_2) \bigr) \bigr|
\le\frac{C (\log N)\varphi
^{C_\zeta}}{N},\nonumber
\\
\eqntext{E_1, E_2\in[ \lambda_+/2,
\lambda_+ ]}
\end{eqnarray}
since the proof relies only on the estimates of $m(z)$ for $ z \in{
\mathbf{S}}(C_\zeta)$ and $\Re z\in[E_1, E_2]$ given by our modified
version of (\ref{Lambdafinal}) and (\ref{Lambdaofinal}). Together with
(\ref{jjp}), we obtain that for any fixed $\varepsilon>0$, the rigidity
result (\ref{resrig}) holds for $j\le(1-\varepsilon) N$, and
(\ref{resrig2}) holds for $E\ge\varepsilon$.

Therefore, we conclude that (\ref{6-1}) and (\ref{6-2}) hold with
$\zeta $-high probability for the case $\lim_{N \to\infty} d_N=1$. Now
to obtain Theorem \ref{twthm} when $\lim_{N \to\infty}  d_N=1$, one
needs only to repeat the argument in this section. We note that in the
proof of (\ref{6-22}), we used (\ref{Lambdafinal}), but only for $z$'s
such that $\Re z$ is very close to $\lambda_+$, which is covered by our
modified version of (\ref{Lambdafinal}). Similarly for Corollary
\ref{23}, we used (\ref{resrig2}) but only for $E$'s which are very
close to $\lambda_+$. Therefore, we obtain Theorem \ref{twthm} in the
case $\lim_{N \to\infty}  d_N=1$.
\end{pf*}
%
\section{Universality of eigenvalues in bulk}
\label{proUB} In this section, our goal is to prove Theorem
\ref{thmmain}. This follows from our key technical result in
Section~\ref{secMPlaw} and the usual arguments using the ergodicity of
the Dyson Brownian motion mentioned in the
\hyperref[sec1]{Introduction}. Throughout this section we assume that
$\lim_{N \to\infty}  d_N \in(0,\infty) \setminus\{1\}$. Again, we note
that our arguments are valid for both real and complex valued entries.

First, we consider a~flow of random matrices $X_t$ satisfying the
following
matrix valued stochastic differential equation%
%
\begin{equation}
\label{OUflow} \mathrm{d}X_t = \frac{1}{\sqrt{M}}\,\mathrm{d}
\beta_t - \frac{1}{2} X_t \,\mathrm{d}t,
\end{equation}
where $\beta_t$ is a~real matrix valued process whose elements are
standard real valued independent Brownian motions. The initial
condition $X_0 = X = [x_{ij}]$ satisfies (\ref{eqnXmat}) and
(\ref{eqnXmatexpbd}). For any fixed $t\ge0$, the distribution of $X_t$
coincides with that of
%
%
\begin{equation}
\label{matrixdbm} {X_t} \stackrel{d} {=} e^{-t/2}
X_0 + \bigl(1-e^{-t} \bigr)^{1/2} V,
\end{equation}
where
$V$ is a~real matrix with Gaussian entries which have mean $0$ and
variance $1/M$. The singular values of the matrix $X_t$ also satisfy
a~system of coupled SDEs which is also called the Dyson Brownian motion
(with a~drift in our case).
More precisely, let%
%
\begin{eqnarray}\label{H}
\mu&=&\mu_N(\mathrm{d} {\mathbf w}) = \frac{e^{-\mathcal{H}^\beta
_W({\mathbf w})}}{Z_\beta}\,\mathrm{d} {
\mathbf w},\nonumber
\\
\mathcal{H}^\beta_W({\mathbf w}) & =& \beta \Biggl[ \sum
_{i=1}^N \frac{w_{i}^{2}}{2d
} -\frac{1}{N} \sum_{i< j} \log\bigl|w_{j}^2
- w_{i}^2\bigr|
\\
&& \hspace*{11pt}{}  - \biggl( \frac{1}d-1 +
\frac{1-\beta^{-1}}{N} \biggr) \sum_{i=1}^N
\log|w_i| \Biggr]\nonumber
\end{eqnarray}
denote the joint distribution of the singular values of $X $ when the
matrix $X $ has independent Gaussian entries (so that ${X}^\dagger X $
is a~Wishart random matrix). In (\ref{H}), the constant $\beta$ takes
values $\{1,2\}$ with $\beta=2$ for complex entries and $\beta=1$ for
real valued entries. Also, $Z_\beta$ is the normalization constant so
that $\mu$ is a~probability measure.
Denote the distribution of the singular values at time $t$ by $f_t
({\mathbf w})\mu(\mathrm{d}{\mathbf w})$.
Then $f_t$ satisfies%
%
\begin{equation}
\label{dy} \partial_{t} f_t =
\mathcal{L}^Wf_t,
\end{equation}
where%
%
\begin{eqnarray}
\label{LW2}
\mathcal{L}^W&=& L_{\beta,N}^W=
\sum_{i=1}^N \frac{1}{2N}
\partial_{i}^{2}
+\sum_{i=1}^N
\biggl(- \frac{\beta
w_{i}}{2d} + \frac{\beta}{N}\sum
_{j\ne i} \frac{w_i}{w_i^2 - w_j^2}
\nonumber\\[-8pt]\\[-8pt]
&&\hspace*{112pt}{} + \frac{1}{2} \biggl(\beta
\biggl(\frac{1}d-1 \biggr) + \frac{\beta-1}{N} \biggr)
\frac{1}{w_j} \biggr) \partial_{i}.\nonumber
\end{eqnarray}
For any $n\ge1$ we define the $n$-point correlation functions
(marginals) of the probability measure
$f_t\,\mathrm{d}\mu$ by%
%
\begin{equation}
p^{(n)}_{t,N}(w_1, w_2, \ldots,
w_n) = \int_{\mathbb{R}^{N-n}} f_t(\mathbf{w})
\mu(\mathbf{w}) \,\mathrm{d}w_{n+1}\cdots\mathrm{d}w_N.
\label{corr}
\end{equation}
With a~slight abuse of notation, we will sometimes also use $\mu$ to
denote the density of the measure $\mu$ with respect to the Lebesgue
measure. The correlation functions of the equilibrium measure are
denoted by
%
%
\begin{equation}
p^{(n)}_{\mu,N}(w_1, w_2, \ldots,
w_n) = \int_{\mathbb{R}^{N-n}} \mu(\mathbf{w})
\,\mathrm{d}w_{n+1} \cdots\mathrm{d}w_N. \label{correq}
\end{equation}

Now we are ready to prove the \emph{strong local ergodicity of the
Dyson Brownian motion} which states that the correlation functions of
the Dyson Browian motion $p^{(n)}_{t,N}$ and those of the equilibrium
measure $p^{(n)}_{\mu,N}$ are close:
%
%
\begin{theorem} \label{thmmainfk}
Let $X = [x_{ij}]$ with entries $x_{ij}$ satisfying (\ref{eqnXmat}) and
(\ref{eqnXmatexpbd}). Let $E\in[\lambda_- +r, \lambda_+ - r] $ with
some $ r> 0$. Then for any $\varepsilon'> 0$, $\delta>0$, $0 < b=b_N<
r/2$, any integer $n\ge1$ and for any compactly supported continuous
test
function $O\dvtx\mathbb{R}^n\to\mathbb{R}$ we have%
%
\begin{eqnarray}
\label{abstrthm2}
&& \sup_{t\ge N^{-1 + \delta+ \varepsilon'}} \Biggl| \int_{E-b}^{E+b}
\frac{\mathrm{d}
E'}{2b} \int_{\mathbb{R}^n} \,\mathrm{d}\alpha_1
\cdots \mathrm{d}\alpha_n O(\alpha_1,\ldots,
\alpha_n) \frac{1}{\varrho_c(E)^n}\bigl( p_{t,N}^{(n)} - p_{\mu, N}
^{(n)} \bigr)\hspace*{-26pt}\nonumber
\\
&&\hspace*{152pt}{} \times  \biggl(E'+\frac{\alpha_1}{N\varrho_c(E)}, \ldots,
E'+\frac{\alpha_n}{ N\varrho_c(E)} \biggr) \Biggr|\hspace*{-26pt}
\\
&&\qquad  \leq C_n N^{2 \varepsilon'} \bigl[ b^{-1} N^{ - 1+\varepsilon' } + b^{-1/2}
N^{-\delta/2} \bigr],\nonumber
\end{eqnarray}
where $ p_{t,N}^{(n)}$ and $ p_{\mu,N}^{(n)}$, (\ref{corr}) and
(\ref{correq}),\vspace*{1pt} are the correlation functions of the
eigenvalues of the Dyson Brownian motion flow (\ref{matrixdbm}) and
those of the equilibrium measure, respectively, and $C_n$ is
a~constant.
\end{theorem}

%
%
\begin{remark}
Notice that if we choose $\delta= 1- 2 \varepsilon'$ and thus $t = N^{-
\varepsilon'}$, then we can set $b \sim N^{-1+8\varepsilon'}$ so that
the right-hand side of (\ref{abstrthm2}) vanishes as $N
\rightarrow\infty$. From the MP law we know that the spacing of the
eigenvalues in the bulk is $O(N^{-1})$ and thus we see that Theorem
\ref{thmmainfk} yields universality with almost no averaging in~$E$.
\end{remark}

\begin{pf*}{Proof of Theorem \ref{thmmainfk}}
The proof follows from the main result in \cite{ESYY} (Theorem 2.1)
which states that the local ergodicity of Dyson Brownian
motion~(\ref{abstrthm2}) holds for $t \ge N^{-2\frak{a}+\delta}$ for
any $\delta> 0$
provided that there exists an $\frak{a}>0$ such that%
%
\begin{equation}
\sup_{ t \ge N^{-2\frak{a}}} \frac{1}{N} \mathbb{E}\sum
_{j=1}^N \bigl(\lambda_{j} (t)-
\gamma_{j} \bigr)^2 \le CN^{-1-2\frak{a}} \label{assum3}
\end{equation}
holds with a~constant $C$ uniformly in $N$. Here $\sqrt{\lambda_j(t)}$
is the singular value of the matrix $X_t$ given in (\ref{matrixdbm}).
Condition (\ref{assum3}) is a~simple consequence of (\ref{resrig}) as
long as $\frak{a}<1/2$.

Strictly speaking, there are four assumptions in the hypothesis of
Theorem 2.1 in \cite{ESYY}. Assumptions I~and~II of Theorem 2.1 in
\cite{ESYY} are automatically satisfied in the setting that the Dyson
Brownian motion is generated by flows on the covariance matrix
ensembles. Assumption~IV of Theorem of \cite{ESYY} states that the
local density of the singular values of $X_t$ in the scale larger than
$N^{-1+c}$ for any $c>0$, is bounded above by a~constant. As in
\cite{ESYY} this follows from the large deviation estimate
(\ref{Lambdafinal}) since a~bound on $\Im m(z)$, $z=E+i\eta$, can be
easily used to prove an upper bound on the local density of eigenvalues
in a~window of size $\eta$ about $E$. As usual, the additional
condition in \cite{ESYY} on the entropy $S_\mu(f_{t_0})\le CN^m$ for
some constant $m$ for $t_0= N^{-2\frak{a}}$, holds due to the
regularization property of the Ornstein--Uhlenbeck process. Thus for
a~given $0<\varepsilon'<1$, choosing $\frak{a}= 1/2 - \varepsilon'/2, A
= \varepsilon'$ in the second part of Theorem 2.1 in \cite{ESYY} and
using (\ref{resrig}), we obtain (\ref{assum3}) and the proof is
finished.
\end{pf*}

For any $\varepsilon>0$, applying Theorem \ref{thmmainfk} with
$\delta=1-2\varepsilon$, $\varepsilon'=\varepsilon$ and
\mbox{$b=-1+8\varepsilon$,} we obtain universality for all ensembles
with the matrix elements distributed according to $ M^{-1/2} \xi_t $
with
%
%
\begin{equation}
\label{matrixdbmfake} \xi_t = e^{-t/2} \xi_0 +
\bigl(1-e^{-t} \bigr)^{1/2} \xi_G,
\end{equation}
where the matrix $\xi_G $ has independent Gaussian random variables
with mean $0$ and variance $1 $, $t\sim N^{-\varepsilon}$, and the
initial condition $\xi_0$ has entries satisfying our conditions
(\ref{eqnXmat}) and (\ref{eqnXmatexpbd}). In other words, for $t \sim
N^{-\varepsilon}$ the random matrices $\xi_t$ which are distributed
according to (\ref{matrixdbmfake}) have the same correlation functions
as that of the matrix with Gaussian entries, averaged on a~length of
$O(N^{-1+8\varepsilon})$. Thus in order to prove Theorem \ref{thmmain},
it remains to find a~random matrix $\widetilde\xi_t$ of the form
(\ref{matrixdbmfake}) (with time $t = N^{-\varepsilon}$) whose
eigenvalue correlation functions well approximate that of the spectrum
of the given matrix $X$ satisfying (\ref{eqnXmat}) and
(\ref{eqnXmatexpbd}).

The requirements on entries of the matrix $\widetilde{\xi}_t$ are just
mean zero, variance one and subexponential decay; however, it turns out
that for any fixed $X$ and $\varepsilon$, one may find a~$\widetilde
\xi_0$ such that $\widetilde\xi_t$ satisfies (\ref{matrixdbmfake}),
with $t \sim N^{-\varepsilon}$, and the entries
$[\widetilde\xi_t]_{ij}$ have mean $0$, variance $1$ and the
\emph{same} third moment as those of the (rescaled) initial condition
$\sqrt{M} X$. Moreover $\widetilde\xi _t$ can be chosen in such a~way
so that its entries have fourth moment very close to those of $X$. More
precisely,
Lemma~3.4 in \cite{EYYgenwig} yields that for any given matrix $X$
satisfying (\ref{eqnXmat}) and (\ref{eqnXmatexpbd}) and $t\sim
N^{-\varepsilon}$, there exists a~matrix $\widetilde\xi_t $ of the form
(\ref{matrixdbmfake}) such that for $1\leq k\leq3$,
\[
\mathbb{E} \sqrt M x_{ij}^k=\mathbb{E}[\widetilde
\xi_t]^k_{ij}, \qquad \bigl|\mathbb{E} (\sqrt
M x_{ij})^4-\mathbb {E}[\widetilde \xi_t]^4_{ij}
\bigr|\leq Ct\sim N^{-\varepsilon}.
\]

Now to finish the proof of Theorem \ref{thmmain}, it remains only to
show that that the correlation functions of the eigenvalues of two
matrix ensembles at a~fixed energy [i.e., for a~fixed value of $E =
\Re(z)$] are identical up to the scale $1/N$ provided that the first
four moments of the matrix elements of these two ensembles are almost
identical in above sense. To achieve this, as shown for the Wigner
matrices~\cite{EYYBulkuni} (see Sections 8.6--8.13 of
\cite{EYYBulkuni}), it is enough to show that the corresponding Green
functions are close for these two matrix ensembles. This is the content
of the following theorem which we call, following \cite{EYYBulkuni},
the Green function comparison theorem.

Recall the matrices $X^{\mathbf{v}}, X^{\mathbf{w}}, H^{\mathbf{v}},
H^{\mathbf{w}}$ and the Green functions $G^{\mathbf{v}},
G^{\mathbf{w}}$ from Section~\ref{proUE}.
%
%
\begin{theorem}\label{comparison}
Assume that the first three moments of $x^{\mathbf{v}}_{ij}$ and
$x^{\mathbf{w}}_{ij}$ are identical, that is,
\[
\mathbb{E} \bigl(x^{\mathbf{v}}_{ij} \bigr)^{u} =
\mathbb{E} \bigl(x^{\mathbf{w}}_{ij} \bigr)^{u}, \qquad0
\le u\le3
\]
and the difference between the fourth moments of $x^{\mathbf{v}}_{ij}$
and $x^{\mathbf{w}}_{ij}$ is much less than 1, say
%
%
\begin{equation}
\label{4match} \bigl|\mathbb{E} \bigl( \sqrt M x^{\mathbf{v}}_{ij}
\bigr)^{4 }- \mathbb{E} \bigl( \sqrt M x^{\mathbf{w}}_{ij}
\bigr)^{4 } \bigr|\leq N^{-\delta}
\end{equation}
for some given $\delta>0$. Let $\varepsilon>0$ be arbitrary, and choose
an $\eta$ with $N^{-1-\varepsilon}\le\eta\le N^{-1}$. For any sequence
of positive integers $k_1, \ldots, k_n$, set complex parameters
\[
z^m_j = E^m_j \pm i \eta,
\qquad j = 1, \ldots, k_i, \qquad m = 1, \ldots, n,
\]
with an arbitrary choice of the $\pm$ signs 
and $ \lambda_{-} + \kappa\le|E^m_j| \le\lambda_+ - \kappa$ for some
\mbox{$\kappa>0$.} Let $F(x_1, \ldots, x_n)$ be a~function such that
for any multi-index $\alpha=(\alpha_1, \ldots,\alpha_n)$ with $1\le
|\alpha| = \sum|\alpha_i| \le5$ and for any $\varepsilon'>0$
sufficiently small, we have
%
%
\begin{eqnarray}
\max \Bigl\{\bigl|\partial^{\alpha}F(x_1, \ldots,
x_n)\bigr|\dvtx \max_j|x_j|\leq
N^{\varepsilon'} \Bigr\} &\leq& N^{C_0\varepsilon'}, \label{lowder}
\\
\max \Bigl\{\bigl|\partial^\alpha F(x_1, \ldots,
x_n)\bigr|\dvtx \max_j|x_j|\leq
N^2 \Bigr\} &\leq& N^{C_0}\label{highder}
\end{eqnarray}
for some constant $C_0$.

Then there is a~constant $C_1$, depending on $\alpha$, $\sum_i k_i$ and
$C_0$ such that for any $\eta$ with $N^{-1-\varepsilon}\le\eta \le
N^{-1}$ and for any choices of the signs in the imaginary part
of~$z^m_j$,
%
%
\begin{eqnarray}
\label{eqnmaincompbulk}
&& \Biggl|\mathbb{E}F \Biggl( \frac
{1}{N^{k_1}}\operatorname{Tr}
\Biggl[\prod_{j=1}^{k_1} G^{\mathbf{v}}
\bigl(z^1_{j} \bigr) \Biggr], \ldots, \frac{1}{N^{k_n}}
\operatorname{Tr} \Biggl[ \prod_{j=1}^{k_n}
G^{\mathbf{v}} \bigl(z^n_{j} \bigr) \Biggr] \Biggr)\,{-}\,
\mathbb{E}F \bigl( G^{\mathbf{v}} \,{\to}\, G^{\mathbf{w}} \bigr) \Biggr|\hspace*{-31pt}
\nonumber\\[-4pt]\\[-12pt]
&&\qquad\le C_1 N^{-1/2 + C_1 \varepsilon}+C_1
N^{-\delta+ C_1
\varepsilon},\nonumber
\end{eqnarray}
where in the second term the arguments of $F$ are changed from the
Green functions of $H^{\mathbf{v}}$ to $H^{\mathbf{w}}$, and all other
parameters remain unchanged.
\end{theorem}

Once again we note the equivalence of (\ref{abstrthm2}) and
(\ref{eqnmaincompbulk}) as discussed in \cite{EYYBulkuni}
(Sections~8.6--8.13). The only difference is that in \cite{EYYBulkuni},
the equivalence is proved for Wigner matrices, but the arguments are
easily adapted for covariance matrices. Thus to complete the proof of
Theorem \ref{thmmain}, all that remains is Theorem \ref{comparison}
which is proved below.

\begin{pf*}{Proof of Theorem \ref{comparison}}
The proof is very similar to Lemma~2.3 of \cite{EYYBulkuni}. The only
differences are a~few simple linear algebraic identities. Therefore, we
will only prove the simple case of $k=1$ and $n=1$.

Fix a~bijective ordering map on the index set of the independent matrix
elements,
\[
\phi\dvtx \bigl\{(i, j)\dvtx 1\le i\le M, 1\leq j \le N \bigr\} \to\{1, \ldots,
MN \} 
\]
and define the family of random matrices $X_\gamma$, $0 \leq\gamma \leq
MN$,
\begin{eqnarray*}
{[X_\gamma]}_{ij}&=& \bigl[X^{\mathbf{v}}
\bigr]_{ij},\qquad\phi(i,j) > \gamma,
\\
&=& \bigl[X^{\mathbf{w}} \bigr]_{ij}, \qquad\phi(i,j)\le\gamma.
\end{eqnarray*}
%
In particular we have $ X_0 = X^{\mathbf{v}}$ and $ X_{MN} =
X^{\mathbf{w}}$. Denote
$H_\gamma$, $G_\gamma$ and $\mathcal{G}_\gamma$ as%
\[
H_\gamma=X^\dagger_\gamma X _\gamma, \qquad
G_\gamma=(H_\gamma-z)^{-1},\qquad
\mathcal{G}_\gamma= \bigl(X_\gamma X^\dagger_\gamma-z
\bigr)^{-1}.
\]
First, using the delocalization result (\ref{4444}) and the rigidity of
eigenvalues (\ref{resrig}), it is easy to have the following estimate
on the matrix elements of the resolvent:
%
%
\begin{equation}
\label{basic4} \max_{ \gamma} \max_{ k, l } \max
_{\eta\geq N^{-1-\varepsilon}}\ \max_{\kappa\geq c } \bigl|
\bigl[G_\gamma(z) \bigr]_{k l } \bigr|+ \bigl| \bigl[
\mathcal{G}_\gamma(z) \bigr]_{k l } \bigr|\le N^{C\varepsilon}
\end{equation}
with $\zeta$-high probability for any $\zeta>0$. For instance, for
$\gamma= 0$, we have the identity $ G_0(z) = \sum_{\alpha=1}^N {
\mathbf{v}_{\alpha}^\dagger {\mathbf{v}}_{\alpha} \over
\lambda_\alpha- z} $ where $\lambda_\alpha, {\mathbf{v}}_\alpha$ are
the eigenvalues and eigenvectors of $H_0$. By the delocalization result
(\ref{4444}), we obtain%
\[
\bigl|G_0(z)\bigr| \leq{\varphi^{C_\zeta} \over N} \sum
_{\alpha=1}^N {1 \over|\lambda_\alpha
- z|}.
\]
We write the above sum as%
%
\begin{equation}
\label{eqnrigidcalcest} \sum_\alpha{1 \over|\lambda_\alpha- z|}
= \sum_k\sum_{\alpha\in I_k}
{1 \over|\lambda_\alpha- z|} \leq\sum_k
|I_k| {1 \over|\lambda_\alpha- E|+\eta},
\end{equation}
where $I_k$ is the set of all $\alpha$ such that
\[
{N^{-1}2^{K-1} \leq(\lambda_\alpha-E )\leq
N^{-1}2^K}.
\]
By the rigidity of eigenvalues we obtain that $|I_K|\leq C 2^K$ with
$\zeta$-high probability. Substituting this bound in (\ref
{eqnrigidcalcest}) yields the estimate (\ref{basic4}).

Recall that $\mathbf{x}_i$ denotes the $i$th column of $X$. For $1\leq
i\leq N$, using straightforward algebra, it is easy to check that
%
%
\begin{equation}
\label{mGi} \qquad \mathcal{G}^{(i)}_{kl} =
\mathcal{G}_{kl}+ \frac{(\mathcal{G}\mathbf{x}_i )_k
(\mathbf{x}_i^\dagger\mathcal{G})_l}{1- \langle\mathbf
{x}_i,\mathcal{G}(z) \mathbf{x}_i\rangle},\qquad\mathcal{G}_{kl}
=\mathcal{G}^{(i)}_{kl}- \frac{(\mathcal{G}^{(i)} \mathbf{x}_i)_k( \mathbf{x}_i
^\dagger\mathcal{G}^{(i)})_l}{1+ \langle\mathbf{x}_i, \mathcal
{G}^{(i)} (z) \mathbf{x}_i\rangle}.
\end{equation}
From (\ref{eqnGii}) we obtain
%
%
\begin{eqnarray}
\label{mGii} \bigl\langle\mathbf{x}_i,\mathcal{G}^{(i)}
(z) \mathbf{x}_i \bigr\rangle&=&-1+\frac{-1}{z G_{ii}}, \qquad \bigl
\langle \mathbf{x}_i,\mathcal{G}(z) \mathbf{x}_i \bigr
\rangle=1+z G_{ii},
\\
\mathcal{G}\mathbf{x}_i &=&\frac{\mathcal{G}^{(i)} \mathbf{x}_i } {
1+ \langle\mathbf{x}_i, \mathcal{G}^{(i)} (z)
\mathbf{x}_i\rangle} =-z G_{ii}
\mathcal{G}^{(i)} \mathbf{x}_i.\label{98}
\end{eqnarray}
Furthermore, from (\ref{eqnGij}) it follows that
\begin{eqnarray*}
\bigl\langle\mathbf{x}_i,\mathcal{G}^{(i )}
\mathbf{x}_j \bigr\rangle&=& \bigl\langle\mathbf{x}_i,
\mathcal{G}^{(i j)} \mathbf{x}_j \bigr\rangle-
\frac{\langle\mathbf{x}_i, \mathcal{G}^{(ij)}
\mathbf{x}_j\rangle\langle\mathbf{x}_j,\mathcal{G}
^{(ij)}\mathbf{x}_j\rangle}{1+ \langle\mathbf{x}_j,\mathcal
{G}^{(ij)} \mathbf{x}_j\rangle}
\\
&=& \frac{\langle\mathbf{x}_i,\mathcal{G}^{(ij)} \mathbf
{x}_j\rangle}{1+ \langle\mathbf{x}_j,\mathcal{G}
^{(ij)} \mathbf{x}_j\rangle} =-z G_{jj}^{(i)} \bigl\langle
\mathbf{x}_i,\mathcal{G}^{(ij)} \mathbf{x}_j
\bigr\rangle=- \frac{ G_{ij}
}{G_{ii}}.
\end{eqnarray*}
Similarly
%
%
\begin{equation}
\label{910} \langle\mathbf{x}_i,\mathcal{G}\mathbf{x}_j
\rangle
=-z G_{ii} \bigl\langle\mathbf{x}_i, \mathcal{G}^{(i )}
\mathbf{x}_j \bigr\rangle= zG_{ij},
\end{equation}
which implies that
%
%
\begin{equation}
\label{iGj} \bigl\langle\mathbf{x}_i, \mathcal{G}^{(i)}
\mathbf{x}_j \bigr\rangle=\frac{ G_{ij} }{G_{ii}}, \qquad\langle
\mathbf{x}_i,\mathcal{G}\mathbf{x}_j
\rangle=-zG_{ij}.
\end{equation}
Let $x_i$ be the $i$th row of $X$. By symmetry, the above identities
also hold if one switches $\{G,\mathbf{x}_i\}$ and $\{\mathcal
{G},x_i\}$.

Combining the above identities with (\ref{basic4}), we obtain the bound
%
%
\begin{eqnarray}
\label{basic5}
\quad && \max_{ \gamma} \max_{ k,l } \max
_{\eta\geq N^{-1-\varepsilon}}\ \max_{\kappa\ge c} \bigl|
\bigl[G_\gamma(z) \bigr]_{k l } \bigr|+ \bigl|
\bigl[X_\gamma G_\gamma(z) \bigr]_{k l } \bigr|+ \bigl|
\bigl[ G_\gamma X_\gamma^\dagger(z)
\bigr]_{k l } \bigr|
\nonumber\\[-8pt]\\[-8pt]
&&\qquad{} + \bigl| \bigl[X_\gamma G_\gamma
X_\gamma^\dagger(z) \bigr]_{k l } \bigr|\le
N^{C\varepsilon},\nonumber
\end{eqnarray}
with $\zeta$-high probability.

Consider the telescopic sum of differences of expectations
%
%
\begin{eqnarray}
\label{tel} &&\mathbb{E}F \biggl( \frac{1}{N}\operatorname {Tr}
\frac{1} {
H^{\mathbf{w}}-z} \biggr) - \mathbb{E}F \biggl( \frac{1}{N}
\operatorname{Tr}\frac{1} {
H^{\mathbf{v}}-z} \biggr)
\nonumber\\[-8pt]\\[-8pt]
&&\qquad= \sum_{\gamma=1}^{MN} \biggl[
\mathbb{E}F \biggl( \frac{1}{N}\operatorname{Tr}\frac{1} {
H_\gamma-z} \biggr) -
\mathbb{E}F \biggl( \frac{1}{N}\operatorname{Tr} \frac{1} {
H_{\gamma-1}-z} \biggr)
\biggr].\nonumber
\end{eqnarray}
Let $E^{(ij)}$ denote the matrix whose matrix elements are zero
everywhere except at the $(i,j)$ position, where it is 1, that is,
$E^{(ij)}_{k\ell}=\delta_{ik}\delta_{j\ell}$. Fix a~$\gamma\ge1$, and
let $(i,j)$ be determined by $\phi(i, j) = \gamma$. We will compare
$H_{\gamma-1}$ with $H_\gamma$. Note that these two matrices differ
only in the $(i,j)$ matrix element, and they can be written as
\[
X_{\gamma-1} = Q + V, \qquad V:= x^{\mathbf{v}}_{ij}
E^{(ij)}, \qquad X_\gamma= Q + W, \qquad W:=
x^{\mathbf{w}}_{ij} E^{(ij)}
\]
with a~matrix $Q$ that has zero matrix element at the $(i,j)$ position.
Define the Green functions
\[
R = \frac{1}{Q^\dagger Q-z}, \qquad S= \frac{1}{H_{\gamma-1}-z}, \qquad T=
\frac{1}{H_{\gamma}-z}.
\]
The following lemma is at the heart of the Green function comparison
first established in \cite{EYYBulkuni} (subsequently used in
\cite{EYYgenwig,EYYrigid,EKYY12}) which states that the difference of
smooth functionals of Green functions of two matrices which differ by
a~single entry can be bounded above as a~function of its first four
moments.
\end{pf*}
%

%
\begin{lemma} \label{lemGCheart}
Let $m_k$ be the $k$th moment of $ \sqrt M x^{\mathbf{v}}_{ij}$, then
%
%
\begin{eqnarray}
\label{ygz1} 
&& \mathbb{E} \biggl[ F \biggl( \frac{1}{N}
\operatorname{Tr}S \biggr) - F \biggl( \frac{1}{N}\operatorname{Tr}R \biggr)
\biggr]
\nonumber\\[-8pt]\\[-8pt]
&&\qquad = A(Q, m_1, m_2, m_3)+N^{-5/2+C\varepsilon}+
\widetilde A(Q)m_4\nonumber
\end{eqnarray}
for a~functional $A(Q, m_1, m_2, m_3)$ which depends only on the
distribution of $Q$ and $m_1, m_2, m_3$. The constant $\widetilde A(Q)$
depends only on the
distribution of $Q$ and satisfies the bound%
\[
\bigl|\widetilde A(Q)\bigr|\leq N^{-2+C\varepsilon}.
\]
\end{lemma}

Before giving the proof of Lemma \ref{lemGCheart}, let us use it to
conclude the foregoing argument in the proof of Theorem
\ref{comparison}. Note that the matrices $H_\gamma$ and $Q$ also differ
by one entry, and therefore applying Lemma \ref{lemGCheart} yields
%
%
\begin{eqnarray}
\label{ygz2} 
&& \mathbb{E} \biggl[ F \biggl( \frac{1}{N}
\operatorname{Tr}T \biggr) - F \biggl( \frac{1}{N}\operatorname{Tr}R \biggr)
\biggr]
\nonumber\\[-8pt]\\[-8pt]
&&\qquad = A(Q, m_1, m_2, m_3)+N^{-5/2+C\varepsilon}+
\widetilde A(Q)m'_4,\nonumber
\end{eqnarray}
where $m'_4$ is the fourth moment of $ \sqrt M x^{\mathbf{w}}_{ij}$ (by
hypothesis, the first three moments of $x^{\mathbf{w}}_{ij}$ are
identical to those of $x^{\mathbf {v}}_{ij}$). Since $|m'_4-m _4|\leq
N^{-\delta}$ by hypothesis, we have
\[
\mathbb{E}F \biggl( \frac{1}{N}\operatorname{Tr}\frac{1} {
H_\gamma-z} \biggr)
- \mathbb{E}F \biggl( \frac{1}{N}\operatorname{Tr}\frac{1} { H_{\gamma-1}-z}
\biggr) \le C N^{-5/2 + C \varepsilon}+C N^{-2 -\delta+ C \varepsilon}.
\]
Using the above estimate and summation over $\gamma$ yields [see
(\ref{tel})]
\[
\mathbb{E}F \biggl( \frac{1}{N}\operatorname{Tr}\frac{1} {
H^{\mathbf{v}} - z} \biggr)
- \mathbb{E}F \biggl( \frac{1}{N}\operatorname{Tr}\frac{1} { H^{\mathbf{w}} - z}
\biggr)\le C N^{-1/2+ C \varepsilon}+C N^{-\delta+ C \varepsilon},
\]
obtaining precisely what we set out to show in (\ref{eqnmaincompbulk}).
The proof can be easily generalized to functions of several variables.
Thus to conclude the proof of Theorem~\ref{comparison}, we just need to
give the proof of Lemma~\ref{lemGCheart}.

\begin{pf*}{Proof of Lemma \ref{lemGCheart}}
We first claim that the estimate (\ref{basic4}) holds for the Green
function $R$ as well. To see this, from the resolvent expansion we
obtain
\begin{eqnarray*}
R &=& S + S \bigl(V^\dagger X+X^\dagger V+V^\dagger V \bigr)S +
\cdots+ \bigl[ S \bigl(V^\dagger X+X^\dagger V+V^\dagger V \bigr)
\bigr]^9S
\\
&&{} + \bigl[ S \bigl(V^\dagger X+X^\dagger V+V^\dagger V \bigr)
\bigr]^{10} R.
\end{eqnarray*}
Since the matrix $V$ has only at most one nonzero entry, when computing
the $(k,\ell)$ matrix element of the matrix identity above, each term
is a~finite sum involving matrix elements of $S$, $XS$, $SX^\dagger$,
$XSX^\dagger$ or $R$ (only for the last term) and $x^{\mathbf
{v}}_{ij}$. Using the bound (\ref{basic5}) for the $S$ matrix elements,
the subexponential decay for $x^{\mathbf{v}}_{ij}$ and the trivial
bound $|R_{ij}| \le\eta^{-1}$, we obtain that the estimate
(\ref{basic4}) holds for~$R$. Similarly by expanding $XR$, $RX$ and
$\mathit{XRX}$, we can obtain (\ref{basic5}) for $XR$, $RX$ and
$\mathit{XRX}$, $QR$, $RQ$ and $QRQ$.

Now we prove (\ref{ygz1}). By the resolvent expansion,
%
%
\begin{eqnarray}
\label{SR-r} S &=& R - R \bigl(V^\dagger Q+Q^\dagger
V+V^\dagger V \bigr)R + \cdots
\nonumber\\[-8pt]\\[-8pt]
&&{} - \bigl[ R \bigl(V^\dagger
Q+Q^\dagger V+V^\dagger V \bigr) \bigr]^9R+O
\bigl(N^{-4} \bigr)\nonumber
\end{eqnarray}
holds with extremely high probability. Thus we may write%
\[
\frac{1}{N} \operatorname{Tr}S = \frac{1}{N} \operatorname {Tr}R+
\sum_{k \leq20} y_k+O \bigl(N^{-4}
\bigr), %
\]
where $y_k$ is the sum of the terms in (\ref{SR-r}), in which there are
exactly $k$ $V$'s. Recall that $m_k$ is the $k$th moment of $\sqrt M
x_{ij}$, which is $O(1)$ if $k=O(1)$. The terms $y_k$ satisfy the bound
[with ${ K}=(k_1, k_2, \ldots,k_n )$ and $|{ K}|:=\sum_ik_i$]
%
\begin{eqnarray}\label{52666}
|y_k| &\leq& N^{C\varepsilon}N^{-k/2},\nonumber
\\
\mathbb{E}_{\mathbf{v}} y_{k_1}y_{k_2}\cdots
y_{k_n} &=&N^{-|{ K}|/2} m_{| K|} z_K(Q),
\\
\bigl|z_K(Q)\bigr|&\leq& N^{C\varepsilon}\nonumber
\end{eqnarray}
for some $z_K(Q)$ depending only on the distribution $Q$, and the last
inequality holds with $\zeta$-high probability. Here $\mathbb
{E}_{\mathbf{v}} $ is the expectation value with respect to the
distribution of the entries of the matrix $X^{ \mathbf{v}}$. Then we
have
%
%
\begin{eqnarray}
\label{temp66}
&& \mathbb{E}F \biggl(\frac{1}{N}\operatorname{Tr}
\frac{1} {
H_{\gamma-1}-z} \biggr)
\nonumber\\[-8pt]\\[-8pt]
&&\qquad = \mathbb{E}\sum_{n=0}^4
\frac{1}{n!}F^{(n)} \biggl( \frac{1}{N} \operatorname{Tr}R
\biggr) \biggl(\sum_{k\leq20} y_k
\biggr)^n+O \bigl(N^{-5/2+C\varepsilon} \bigr).\nonumber
\end{eqnarray}
From (\ref{52666}) we obtain%
\begin{eqnarray*}
&& \mathbb{E}F \biggl(\frac{1}{N}\operatorname{Tr}\frac{1} {
H_{\gamma-1}-z} \biggr)
\\
&&\qquad = \mathbb{E}\sum_{n=0}^4
\frac{1}{n!}F^{(n)} \biggl( \frac{1}{N} \operatorname{Tr}R
\biggr) \biggl(\sum_{k_1, \ldots,
k_ n } N^{-|K|/2}
m_{|K|} z_K(Q) \biggr) +O \bigl(N^{-5/2+C\varepsilon} \bigr)
\\
&&\qquad =B+O \bigl(N^{-5/2+C\varepsilon} \bigr)+A(Q, m_1, m_2,
m_3)+\widetilde A(Q)m_4,
\end{eqnarray*}
where $A(Q, m_1, m_2, m_3)$ depends only on the distribution of
$Q$ and $m_1, m_2, m_3$ and%
\begin{eqnarray*}
B &=&\mathbb{E}\sum_{n=0}^4
\frac{1}{n!}F^{(n)} \biggl( \frac{1}{N} \operatorname{Tr}R
\biggr) \biggl(\sum_{k_1,
\ldots, k_ n\dvtx|K|\geq5, k_i\leq20} N^{-|K|/2}
m_{|K|} z_K(Q) \biggr),
\\
\widetilde A(Q) &=&\mathbb{E}\sum_{n=0}^4
\frac{1}{n!}F^{(n)} \biggl( \frac{1}{N} \operatorname{Tr}R
\biggr) \biggl( \sum_{k_1, \ldots, k_ n\dvtx {|K|=4}} N^{-2}
z_K(Q) \biggr).
\end{eqnarray*}
In the above $K = \sum_{i}k_i$. Now
it remains only to prove%
\[
|B|\leq O \bigl(N^{-5/2+C\varepsilon} \bigr), \qquad\widetilde A(Q) \leq O
\bigl(N^{-2+C\varepsilon} \bigr).
\]
Using the estimate (\ref{basic5}) for $R$ and the derivative bounds
(\ref{lowder}) for the typical values of $ \frac{1}{N} \operatorname
{Tr}R$, we see that $F^{(n)} ( \frac{1}{N} \operatorname{Tr}R )$
($n\leq4$) are bounded by $N^{C\varepsilon}$ with \mbox{$\zeta$-}high
probability. Similarly $z_K$ ($k_i\leq20$) is also bounded by
$N^{C\varepsilon}$ for some $C>0$ with $\zeta$-high probability. Now we
define $\Xi_g$ as the good set where these quantities are bounded by
$N^{C\varepsilon}$. Furthermore, using (\ref{highder}) and the
definition of $z_K$, we know that $F^{(n)} ( \frac{1}{N} \operatorname
{Tr}R )$ and $z_K$ are bounded by $N^C$ for some $C>0$ in $\Xi_g^c$.
Since $\Xi_g^c$ has a~very small probability by
(\ref{basic5}), we have%
\[
\widetilde A(Q) = \mathbb{E}_{\Xi_g} \sum_{n=0}^4
\frac{1}{n!}F^{(n)} \biggl( \frac{1}{N} \operatorname{Tr}R
\biggr) \biggl( \sum_{k_1, \ldots, k_ n\dvtx {|K|=4}} N^{-2}
z_K(Q) \biggr)+O \bigl(N^{-5/2+C\varepsilon} \bigr).
\]
Then with the bounds on $F^{(n)}$ and $z_K$ in $\Xi_g$, we obtain
$\widetilde A(Q) \leq O(N^{-2+C\varepsilon})$. Similarly with
$m_{|K|}\leq O(1)$, we have $\widetilde B\leq O(N^{-5/2+C\varepsilon})$
completing the proof of Lemma \ref{lemGCheart} and thereby also
finishing the proof of Theorem \ref{comparison}.
\end{pf*}

\section{A priori bound for the strong local Marcenko--Pastur
law}\label{ld-MPlaw} Our goal in this section is to prove the following
weaker form of Theorem \ref{451},
and in Section~\ref{secp45} we will use this a~priori bound to obtain
the stronger form as claimed in Theorem~\ref{451}. Throughout this
section, we will assume that $\lim_{N \to\infty}  d_N \in(0,\infty)
\setminus\{1\}$.

%
%
\begin{theorem}\label{thmdetailed}
Let $X = [x_{ij}]$ with the entries $x_{ij}$ satisfying (\ref{eqnXmat})
and (\ref{eqnXmatexpbd}). For any $\zeta>0$ there exists a~constant
$C_\zeta$ such that the following event holds with $\zeta$-high
probability:
%
%
\begin{equation}
\label{mainlsresult} \bigcap_{z \in{ \mathbf{S}}(C_\zeta)} \biggl\{
\Lambda_d(z) +\Lambda_o(z) \leq\varphi^{C_\zeta}
\frac{1}{(N\eta)^{1/4}} \biggr\}.
\end{equation}
\end{theorem}

\subsection{A roadmap for the reader} For conveying the key ideas of
the computations involved in this section, we first give a~brief
outline of the proof of \mbox{Theorem}~\ref{thmdetailed}. For the
reader's convenience, we also indicate the corresponding
theorems/lemmas in
which the estimates mentioned below are proved. 

The proof of Theorem~\ref{thmdetailed} proceeds via ``self-consistent
equations'' explained below. Let us fix $\zeta> 0$.
By definition it follows that
\[
m(z) = {1 \over N} \sum_{i}
G_{ii}(z) = {1 \over N} \sum_{i}
{1
\over-z - z(1/M) \operatorname{Tr}\mathcal{G}^{ (i)} - Z_i },
\]
where
%
%
\begin{eqnarray}
Z_i&:=&z \bigl\langle\mathbf{x}_i,\mathcal{G}^{(i)}
\mathbf {x}_i \bigr\rangle-\frac
{z}{M}\operatorname{Tr}
\mathcal{G}^{(i)}. \label{eqnZi}
\end{eqnarray}
We will first establish Theorem~\ref{thmdetailed} for $\Im z
={\eta\sim1}$. For ${\eta\sim
1}$, the empirical Stieltjes transform satisfies%
\[
m(z)  = {1 \over N} \sum_{i}
{1 \over1 - z-d-z d m(z)+Y_i }, \qquad\max_i|Y_i|\leq
\varphi^{C_\zeta} \Psi
\]
with $\zeta$-high probability (see
Lemma~\ref{lemselfcon1}) where%
%
\begin{eqnarray}
\Psi&:=&\sqrt{\frac{\Im m_c+\Lambda}{N\eta}}.\label{eqndefpsi} 
\end{eqnarray}
%

\begin{remark}\label{remsizeofff}
Notice that when $m_c+ \Lambda \leq O(1)$, we have
%
%
\begin{equation}
\label{sizeofff2} \Psi\le O(N\eta)^{-1/2}.
\end{equation}
\end{remark}
Consequently, we deduce that for {$\eta\sim1$}, the function $m(z)$
satisfies the ``self-consistent'' equation
%
%
\begin{equation}
\label{eqnselfconintro1} m(z)  = {1 \over1 - z-d-z
d m(z) } +O \bigl(
\varphi^{C_\zeta} \Psi \bigr)
\end{equation}
with $\zeta$-high probability. Notice that the above equation satisfied
by $m(z)$ is nearly identical to the\vadjust{\goodbreak} fixed point equation satisfied by
the Stieltjes transform of the \mbox{MP-}law, namely
%
%
\begin{equation}
\label{eqnselfconintro2} m_c(z) + \frac{1}{z - (1-d) + z d m_c(z)} = 0
\end{equation}
with $\Im m_c> 0$ when $\Im z > 0$. From (\ref{eqnselfconintro1}) and
(\ref{eqnselfconintro2}), we immediately deduce that (Lemma
\ref{lemselfcon1}) for $\eta\sim1$, with $\zeta$-high probability,%
%
\begin{equation}
\label{eqnLambintroapbd} |m - m_c| = \Lambda(z) \leq\varphi ^{C_\zeta}
\frac{1}{(N\eta)^{1/4}}.
\end{equation}

We now use (\ref{eqnLambintroapbd}) to establish Theorem
\ref{thmdetailed} for $\eta\sim1$. To this end, we identify the
following ``bad sets'' (improbable events).
For $z \in\mathbf{S}(0)$, define
%
%
\begin{equation}
\qquad \Omega(z,K):= \Bigl\{ \max \Bigl\{\Lambda_o(z), \max
_{i}\bigl|G_{ii}(z) - m(z) \bigr|, \max_{i}
|Z_i| \Bigr\}\geq K\Psi(z) \Bigr\}. \label{eqexcep} 
\end{equation}
Then the event (Lemma~\ref{coretaeqO1})
%
%
\begin{equation}
\label{eqngpww3intro} \bigcap_{z \in{ \mathbf{S}}(0),
\eta\sim1 }\Omega \bigl(z,
\varphi^{C_\zeta} \bigr)^c
\end{equation}
holds with $\zeta$-high probability. Here $A^c$ denotes the complement
of the set $A$. The estimate (\ref{eqngpww3intro}) coupled with
(\ref{eqnLambintroapbd}) immediately establishes Theorem
\ref{thmdetailed} for ${\eta\sim1}$.

Before proceeding, we notice the following important point. When $\eta$
is not assumed to be $\sim1$, a~statement analogous to
(\ref{eqngpww3intro}) holds with a~different assumption. Set
%
%
\begin{eqnarray}
\mathbf{B}(z) &:=& \bigl\{ \Lambda_o(z) + \Lambda_d(z)
> ( \log N)^{-1} \bigr\}, \label{eqndefB}
\\
\Gamma(z,K) &:=& \Omega(z,K)^c \cup\mathbf{B}(z). \label{eqneventG}
\end{eqnarray}
In Lemma~\ref{lemexpnullset} we show that
%
%
\begin{equation}
\label{eqngpwwintro} \bigcap_{z \in{ \mathbf{S}}(C_\zeta)}\Gamma \bigl(z,
\varphi^{C_\zeta} \bigr)
\end{equation}
holds with $\zeta$-high probability. It can also be shown that for
$\eta\sim1$, the event $\mathbf{B}^c(z)$ holds with $\zeta$-high
probability.

For proving the result for all $z \in\mathbf{S}(C_\zeta)$ (i.e., for
all
$\eta\geq\varphi^\zeta N^{-1}$) we proceed as follows. For a~function $u(z)$, define its ``deviance'' to be%
%
\begin{equation}
\label{eqndeffm} \mathcal D(u) (z):= \bigl( {u}^{-1}(z) +z d u(z)
\bigr)- \bigl( {m_c}^{-1}(z) +z d m_c(z)
\bigr).
\end{equation}
Clearly, $\mathcal D(m_c) = 0$. The plan is to show that $|\mathcal
D(m)| \approx0$ and, therefore, \mbox{$|m_c- m| \approx0$.}

More precisely, suppose that for two numbers $L,K$ satisfying $\varphi
^L \geq\break  K^2 (\log N)^4$ and for some $A \subset\bigcap_{z
\in\mathbf{S}(L)}
\Gamma(z,K) \bigcap_{\eta\sim1} \mathbf{B}^c(z)$ {(
i.e., $A$ is not in the bad sets of $z$ such that $\Im z\sim1$)} one
has the bound
%
%
\begin{equation}
\label{eqnintroDmbd} \bigl|\mathcal D(m) (z)\bigr| \leq\delta(z)+\infty
1_{\mathbf{B}(z) }\qquad\forall z \in{ \mathbf{S}}(L),
\end{equation}
where $\delta\dvtx \mathbb{C} \mapsto\mathbb{R}_+$ is a~continuous
function, decreasing in $\Im z$ and $|\delta(z)| \leq(\log N)^{-8}$.
Then, via a~continuity argument, we show in Lemma~\ref{lemfm} that
from~(\ref{eqnintroDmbd}) one indeed has the
following stronger conclusion:%
%
\begin{equation}
\label{eqnintrostconcl} \Lambda\leq C(\log N) \frac{
\delta(z)}{\sqrt{\kappa+\eta+\delta}}\qquad\forall z \in{
\mathbf{S}}(L)
\end{equation}
and $A \subset\bigcap_{z \in\mathbf{S}(L)} \mathbf{B}^c(z)$
[i.e., $A$ is contained in the bad sets of $z$ for all $z
\in\mathbf{S}(L)$]. This estimate with a~brief additional argument will
yield that for large enough $C$ and $z \in\mathbf{S}(\varphi^C)$, we
have $\Lambda = o(1)$ and $\Omega(z,\varphi^{C_\zeta})^c$ holds with
$\zeta$-high probability. These two conclusions immediately yield
Theorem~\ref{thmdetailed}.\looseness=1

\subsection{Preliminary estimates} 
We start with the following elementary lemma whose proof is standard:
%
%
\begin{lemma} \label{lemtbtinvmat} For\vspace*{1pt} any rectangular matrix $M$,
and partition matrices $A,B$ and $D$ of $M$ given by $M = {A\ \, B\choose B^\dagger\  D}$, we have the following identity:%
\[
M^{-1} = \pmatrix{
U^{-1} & -U^{-1} B D^{-1}
\cr
-D^{-1}{B}^\dagger U^{-1} & D^{-1} +
D^{-1} {B}^\dagger U^{-1} B D^{-1}
}, \qquad U = A - B D^{-1}{B}^\dagger.
\]
\end{lemma}
%
%
\begin{lemma} \label{lemxxt} For any $z$ not in the spectrum of
${X}^\dagger X$,
we have%
\[
X \bigl(X^\dagger X-z \bigr)^{-1}X^\dagger=I+z \bigl(X
X^\dagger-z \bigr)^{-1}.
\]
\end{lemma}

\begin{pf}
Indeed from the SVD decomposition given in (\ref{eqnSVD}), we have
\begin{eqnarray*}
X \bigl({X}^\dagger X -z \bigr)^{-1}
{X}^\dagger &=& \sum_{\alpha}
{\lambda_\alpha\over\lambda_\alpha- z} \mathbf {u}_{\alpha} \mathbf{u}_\alpha^\dagger
\\
&=& \sum_{\alpha} \biggl(1 +{ z \over\lambda_\alpha- z}
\biggr) \mathbf{u}_{\alpha} \mathbf{u}_\alpha^\dagger =
I+z \bigl(X X^\dagger-z \bigr)^{-1}
\end{eqnarray*}
and the lemma is proved.
\end{pf}
%

We record the following properties of $m_c$ without proof.
%
%
\begin{lemma}[(Properties of $m_c$)] \label{lemmw}
For $z = E + i \eta\in{\mathbf S}(0)$ we have the following bounds:
%
%
\begin{eqnarray}
\bigl|m_c(z)\bigr|&\sim& 1, \qquad\bigl|1- m_c^2(z)\bigr|\sim\sqrt{
\kappa+\eta}, \label{smallz}
\\
\label{esmallfake} \Im m_c(z) &\sim&\cases{ \displaystyle
\frac{\eta
}{\sqrt{\kappa+\eta}}, &\quad if $\kappa\ge\eta$ and $|E| \notin[\lambda_-,
\lambda_+]$,
\vspace*{4pt}\cr
\sqrt{\kappa+ \eta}, &\quad if $\kappa\le\eta$ or $|E| \in[
\lambda_-, \lambda_+]$.}
\end{eqnarray}
%
Furthermore%
%
\begin{equation}
\label{sizeofim} \frac{\Im m_c(z)}{N\eta} \ge O \biggl({1\over
N} \biggr)
\quad\mbox{and}\quad\partial_\eta\frac{\Im m_c(z)}{\eta
} \leq0.
\end{equation}
\end{lemma}

Recall $\mathbf{B}(z)$ from (\ref{eqndefB}).
%
%
\begin{lemma}[(Rough bounds of $\Lambda_o^ {(\mathbb T)}$ and $\Lambda
_d^ {(\mathbb T)}$)] \label{lemGkkmest} Fix $\mathbb T
\subset\{1,2,\ldots, N\}$ such that $|\mathbb T| = O(1)$. For
$z\in{\mathbf S}(0)$, there exists a~constant $C = C_{|\mathbb T|}$
such that
the following estimates hold in $\mathbf{B}^c(z)$:%
%
\begin{eqnarray}
\max_{k\notin\mathbb T }\bigl|G^{(\mathbb T)}_{kk}- G
_{kk}\bigr| &\leq& C \Lambda_o^2, \label{eqngkkmest1}
\\
{1 \over C} \leq\bigl|G^{(\mathbb T)}_{kk}\bigr|& \leq& C,
\label{eqngkkmest2}
\\
\Lambda_o^ {(\mathbb T)} &\leq& C \Lambda_o.
\label{eqngkkmest3}
\end{eqnarray}
\end{lemma}

\begin{pf}
For $\mathbb T = \varnothing$, (\ref{eqngkkmest1}) and
(\ref{eqngkkmest3}) follow from definition, and (\ref{eqngkkmest2})
follows from the definition of $\mathbf{B}(z) $ and (\ref{smallz}). For
nonempty $\mathbb T$, one can prove the lemma using an induction on
$|\mathbb T|$. For example, for $|\mathbb T|=1$, using
(\ref{eqnGijGijk}) we can show that
%
%
\begin{equation}
\label{220bk} \bigl|G_{kk}(z) - G^{(\mathbb T)}_{kk}(z)\bigr|
\leq C\Lambda_o^2,
\end{equation}
which implies bound (\ref{eqngkkmest1}). A similar argument will yield
(\ref{eqngkkmest2}) and (\ref{eqngkkmest3}).
\end{pf}

On the other hand, when $\eta\sim1$, a~bound similar to
(\ref{eqngkkmest2}) holds without the assumption of $\mathbf{B}^c$.
%
%
\begin{lemma}[(Rough bounds for $G_{kk}$ for $\eta\sim1$)] \label{lemetaO1bd}
Fix $\mathbb T \subset\{1,2,\ldots, N\}$ such that $|\mathbb T| =
O(1)$. For any $z\in{\mathbf S}(0)$ and $\eta\sim1$, we have the bound
\[
\max_{i} \bigl|G^{(\mathbb T)}_{ii}(z)\bigr| \leq C
\]
for some $C>0$ and $1 \leq i \leq N$.
\end{lemma}

\begin{pf} Let us show the result first for $|\mathbb T| = \varnothing$.
By definition,
\[
| G_{ii}|= \biggl|\sum_{\alpha}
\frac{\mathbf{u}_\alpha
(i)\widebar\mathbf{u}_
\alpha(i)}{\lambda_\alpha-z} \biggr|\leq\frac{1}{\eta} \sum_{\alpha}
\mathbf{u}_\alpha(i)\widebar \mathbf{u}_ \alpha(i) \leq
\frac{1}{\eta}\leq C,
\]
where in the second inequality we have used $|\lambda_\alpha-z |\geq
\Im z=\eta$. The claim for a~general $\mathbb T$ follows similarly.
\end{pf}

Recall from (\ref{eqexcep}) and (\ref{eqneventG}), the event
\[
\Gamma \bigl(z,\varphi^{C_\zeta} \bigr) = \Omega \bigl(z,\varphi
^{C_\zeta} \bigr)^c \cup\mathbf{B}(z).\vadjust{\goodbreak}
\]
Define the events%
%
\begin{eqnarray}\label{eqnOoOd}
\Omega_{o}(z, K) &:=& \bigl\{ \Lambda_0
\geq K\Psi(z) \bigr\},\nonumber
\\
\Omega_{d}(z, K) &:=& \Bigl\{ \max_{i}\bigl|G_{ii}(z)
- m(z) \bigr|\geq K\Psi(z) \Bigr\},
\\
\Omega_{Z}(z, K) &:=& \Bigl\{ \max_{i}|Z_i
|\geq K\Psi(z) \Bigr\}.\nonumber
\end{eqnarray}
Note: {$\Omega_{d}(z, K) $ is defined with $m$, not $m_c$.} Set
\[
\Omega(z,K)=\Omega_{o}(z, K)\cup\Omega_{d}(z, K) \cup
\Omega_{Z}(z, K).
\]

%
\begin{lemma}\label{lemexpnullset}
For any $\zeta>0$ there exists a~constant $C_\zeta$ such that
%
%
\begin{equation}
\label{gpww} \bigcap_{z \in{ \mathbf{S}}(C_\zeta)}\Gamma \bigl(z,
\varphi^{C_\zeta} \bigr)
\end{equation}
holds with $\zeta$-high probability.
\end{lemma}

\begin{pf} We need to prove only that there exists a~uniform constant
$C_\zeta$ such that for any $z \in{ \mathbf{S}}(C_\zeta)$ the event
%
%
\begin{equation}
\label{gpww2} \Gamma \bigl(z, \varphi^{C_\zeta} \bigr)
\end{equation}
holds with $\zeta$-high probability. It is clear that (\ref{gpww})
follows from (\ref{gpww2}) and the fact that
%
%
\begin{equation}
\label{jjNC} |\partial_zG_{ij}|\leq
N^C, \qquad\eta> N^{-1}.
\end{equation}
%

Note $ \Gamma(z, K) =(\Omega_o^c\cup\mathbf{B}) \cap(\Omega
_d^c\cup\mathbf{B})\cap (\Omega_Z^c\cup\mathbf{B})$. First we shall
prove that the $\Omega _o^c\cup\mathbf{B} $ holds with $\zeta$-high
probability. Using formula (\ref{eqnGij}) and the fact that $|G|^2 =
G^*G$, we infer that there exists a~constant $C_\zeta$ such that with
$\zeta$-high probability,
%
%
\begin{eqnarray}
\label{227pp}
\Lambda_o(z) &\leq& C|z|\max_{i \neq j}
\bigl| \bigl\langle\mathbf{x}_i, \mathcal{G}^{(ij)}
\mathbf{x}_{j} \bigr\rangle \bigr| \leq\varphi^{C_\zeta}
{|z| \over N} \biggl( \sum_{k,l} \bigl|
\mathcal{G}^{(ij)}_{kl}\bigr|^2 \biggr)^{1/2}\nonumber
\\
&\leq& \varphi^{C_\zeta}{|z| \over N} \bigl( \operatorname{Tr}\bigl|
\mathcal{G}^{(ij)}\bigr|^2 \bigr)^{1/2}
\\
& \leq&\varphi^{C_\zeta} |z|\sqrt{\frac{\Im\operatorname
{Tr}\mathcal{G}^{(ij)}}{N^2\eta}} \qquad\mbox{in }
\mathbf{B}^c(z),\nonumber
\end{eqnarray}
where in the last step we used the identity $\eta^{-1}
\Im\operatorname{Tr}\mathcal{G}^{(ij)} = \operatorname{Tr}|\mathcal
{G}^{(ij)}|^2$. Using the identity
%
%
\begin{equation}
\label{eqndtrGmG} \operatorname{Tr}G^{(\mathbb T)}(z)-\operatorname {Tr}
\mathcal{G}^{(\mathbb T)}(z)= \frac{M-N+|\mathbb T| }{z},
\end{equation}
formula (\ref{eqngkkmest1}) and $\Im(z^{-1}) =\eta|z|^{-2}$, we deduce
that with $\zeta$-high probability
\[
\Lambda_o(z) \leq\varphi^{C_\zeta}
\sqrt
{\frac{\Im m_c+\Lambda+\Lambda_o^2 }{N\eta}+\frac{1}{N}}\qquad \mbox{in }\mathbf{B}^c(z).
\]
For the above choice of $C_\zeta$, for $z\in{ \mathbf{S}}(3C_\zeta )$,
with $\Im m_c\leq O(1)$, the bound
%
%
\begin{equation}
\label{eqnL0aprbd} \Lambda_o(z) \leq\varphi^{C_\zeta}
\sqrt{\frac{\Im m_c+\Lambda}{N\eta}+\frac{1}{N}}+o(\Lambda _o)\qquad
\mbox{in }\mathbf{B}^c(z)
\end{equation}
holds with $\zeta$-high probability. From (\ref{eqnL0aprbd}) and
(\ref{sizeofim}) it follows that
$\Omega_o^c\cup\mathbf{B} $ holds with $\zeta$-high probability. %

A similar argument using the large deviation lemma will give
%
%
\begin{eqnarray}\label{eqnZibd}
|Z_i| = |z| \biggl| \bigl\langle\mathbf{x}_i,
\mathcal{G}^{(i )} \mathbf{x}_{i} \bigr\rangle-
\frac
{1}{M}\operatorname{Tr}\mathcal{G}^{(i)} \biggr|\leq|z| \varphi
^{C_\zeta} \sqrt{\Im\operatorname{Tr}\mathcal{G}^{(i)} \over N^2 \eta} \leq%
\varphi^{C_\zeta} \Psi\nonumber
\nonumber\\[-14pt]\\[-10pt]
\eqntext{\mbox{in }\mathbf{B}^c(z)}
\end{eqnarray}
holds with $\zeta$-high probability implying that%
\[
\max_{i}|Z_i| \leq\varphi^{C_\zeta} \Psi
\]
and therefore $\Omega_Z^c\cup\mathbf{B} $ holds with $\zeta$-high
probability.

Finally notice that $\max_{i} |G_{ii} - m| \leq\max_{i \neq j} |G_{ii}
- G_{jj}|$. From (\ref{eqnGii}) we obtain that
\begin{eqnarray*}
|G_{ii} - G_{jj}| &\leq& \biggl|{ 1 \over- z - z \langle{\mathbf
x}_i,\mathcal{G}^{(i)}(z) {\mathbf x}_i\rangle} -
{ 1 \over- z - z \langle{\mathbf x}_j,\mathcal{G}^{(j)}(z) {\mathbf
x}_j\rangle} \biggr|
\\
&\leq&|G_{ii}G_{jj}| \biggl(|Z_i -
Z_j| + {|z| \over M} \bigl|\operatorname{Tr}
\mathcal{G}^{(i)} - \operatorname{Tr} \mathcal{G}^{(j)}\bigr|
\biggr)
\\
&\leq& C \bigl(\varphi^{C_\zeta} \Psi+ \Lambda_o^2+N^{-1}
\bigr)\qquad\mbox{in }\mathbf{B}^c(z)
\end{eqnarray*}
holds with $\zeta$-high probability, where the last inequality follows
from (\ref{eqnZibd}), (\ref{yz10}), (\ref{eqngkkmest1}) and
(\ref{eqngkkmest2}). Thus we have shown that $\Omega_d^c\cup\mathbf {B}
$ holds with $\zeta$-high probability, and the lemma is proved.
\end{pf}
On the other hand, in the case of $\eta\sim1 $, a~result similar to
Lemma~\ref{lemexpnullset} holds without the assumption of
$\mathbf{B}^c$.

%
%
\begin{lemma}\label{coretaeqO1}
For any $\zeta>0$, there exists a~constant $C_\zeta$ such that the
event
%
%
\begin{equation}
\label{gpww3} \bigcap_{z \in{ \mathbf{S}}(0), \eta\sim1} \Omega \bigl(z,
\varphi^{C_\zeta} \bigr) ^c 
\end{equation}
holds with $\zeta$-high probability.
\end{lemma}

\begin{pf}
From (\ref{jjNC}) we see that we need only to prove (\ref{gpww3}) for
fixed $z$. First we note in this case, that is, $\eta\sim1$, we have
$\Im m_c\sim1$ and from Lemma~\ref{lemetaO1bd} we have
$\Lambda=O(1)$ and therefore%
%
\begin{equation}
\label{ckf} \Psi\sim N^{-1/2}.
\end{equation}
As in (\ref{227pp}) and Lemma
\ref{lemetaO1bd} we obtain that%
\[
\Lambda_o\leq\varphi^{C_\zeta} \sqrt{\frac{\Im\operatorname
{Tr}\mathcal{G}^{(ij)}}{N^2
}}\leq
\varphi^{C_\zeta} N^{-1/2}\leq\varphi^{C_\zeta} \Psi
\]
with $\zeta$-high probability. The estimate for $Z_i$ can be proved as
in (\ref{eqnZibd}) using Lemma~\ref{lemetaO1bd}. The estimate for
$\Omega_d$ [see (\ref{eqnOoOd})] can also be proved similarly using the
identity
\[
\operatorname{Tr}\mathcal{G}^{(i)}-\operatorname{Tr}\mathcal
{G}^{(j)}=\operatorname{Tr}G^{(i)}-\operatorname{Tr}
G^{(j)}=O( \eta)^{-1},
\]
which follows from Cauchy's interlacing
theorem of eigenvalues, that is,%
%
\begin{equation}
\label{232t} \bigl| m-m^{(i)} \bigr|\leq(N\eta)^{-1}
\end{equation}
and the proof is finished.
\end{pf}

\subsection{Self-consistent equations}
In Section~\ref{secprelim}, we have bounded $\Lambda_o$ and\allowbreak
$\max_i(G_{ii}-m)$ in terms of $m_c$, $\eta$ and $\Lambda$ in
$\mathbf{B}^c$ (we do not need the event $\mathbf{B}^c$ when
${\eta\sim1}$). In this subsection, we will give the desired bound for
$\Lambda$ and show that the event $\mathbf{B}^c$ holds with
$\zeta$-high probability.

First we give the bound for $\Lambda$ in the case of $\eta\sim1$.
%
%
\begin{lemma} \label{lemselfcon1} For any $\zeta>0$, there exists a~constant $C_\zeta$ such that
%
%
\begin{equation}
\label{gpww4} \bigcap_{z \in{ \mathbf{S}}(0), \eta=
10(1+d)}\Lambda(z)\leq
\varphi^{C_\zeta}N^{-1/4}
\end{equation}
holds with $\zeta$-high probability.
\end{lemma}

\begin{pf}
By the definition of $Z_i$ given in formulas (\ref{eqnZi}) and
(\ref{eqnGii}),%
%
\begin{equation}
\label{fgll} \bigl(G_{ii}(z) \bigr)^{-1} = -z - z
{1 \over M} \operatorname{Tr}\mathcal{G}^{ (i)} -
Z_i.
\end{equation}
Using (\ref{eqndtrGmG}) and (\ref{232t}), we obtain that if
$\eta\sim1$,
%
%
\begin{equation}
\label{eqnximtrgfake} \biggl| z \frac{1}{M}\operatorname {Tr}
\mathcal{G}^{(i)}- z d m(z)+1-d \biggr|\leq C N^{-1}.
\end{equation}
Together with $|Z_i| \leq\varphi^{C_\zeta}\Psi$ [see (\ref{gpww3})],
estimate (\ref{eqnximtrgfake}) implies that
\[
m(z)  = {1 \over N} \sum_{i}
{1 \over1 - z-d-z d m(z)+Y_i }, \qquad\max_i|Y_i|\leq
\varphi^{C_\zeta} \Psi\leq O \bigl(\varphi^{C_\zeta}N^{-1/2}
\bigr).
\]
It thus follows that $ |m(z)|\sim1$ for $\eta\sim1$ with $\zeta$-high
probability. Then using the fact that
$\sum_{i}(G_{ii} - m) = 0$ we obtain that%
\[
\sum_i \bigl(G_{ii}(z) \bigr)^{-1}=m^{-1}(z) +O \Bigl(\max_i
|G_{ii}-m| \Bigr)^2.
\]
Recall $\mathcal D$ in (\ref{eqndeffm}). Using (\ref{fgll}),
(\ref{ckf}) and the bound $|Z_i| + |G_{ii}-m| \leq
\varphi^{C_\zeta}\Psi$ [see (\ref{gpww3})], and we have
\[
\mathcal D(m) = \delta(z), \qquad\bigl|\delta(z)\bigr| \leq\varphi^C \Psi \leq
O \bigl( \varphi^C N^{-1/2} \bigr).
\]
The two solutions $m_1,m_2$ of the equation $\mathcal D(m) = \delta(z)$
for a~given
$\delta(\cdot)$ are given by%
%
\begin{eqnarray}
\label{eqnDmtwosolbdfake} m_{1,2} &=&\frac{\delta(z) + 1-d-z \pm i
\sqrt{(z-\lambda_{-,\delta})(\lambda_{+,\delta}-z)}}{2 d z}, 
\nonumber\\[-8pt]\\[-8pt]
\lambda_{\pm, \delta} &=& 1 + d \pm2 \sqrt{d - \delta(z)} - \delta(z), \qquad
|\lambda_{\pm,\delta} - \lambda_{\pm}| = O(\delta).\nonumber
\end{eqnarray}
Therefore, we obtain $m=m_1$ or $m_2$. It is easy to see that
$|m_1-m_2|\geq O(1)$, since $\eta\sim1$. Since $m(z)$ is continuous
with respect to $E$ (for fixed $\eta$), $m=m_1$ (say) for $E=0$ implies
that $m=m_1$ for all $E=O(1)$. Using this fact and $\Im m >0$, we
obtain that $m(z) =\frac{\delta(z) + 1-d-z + i \sqrt{(z-\lambda
_{-,\delta})(\lambda_{+,\delta}-z)}}{2 d z}$, 
and thus we obtain (\ref{gpww4}) and the proof of the lemma is
complete.
\end{pf}

Now combining (\ref{gpww3}) with (\ref{gpww4}), we have proved that for
any $\zeta>0$, there exists a~constant $C_\zeta$ such that, for
$\eta=10(1+d)$, formula (\ref{mainlsresult}) holds with $\zeta$-high
probability. It immediately follows that the event
%
%
\begin{equation}
\label{gpww5} \bigcap_{z \in{ \mathbf{S}}(0), \eta=
10(1+d)}\mathbf{B}^c(z)
\end{equation}
holds with $\zeta$-high probability for any $\zeta> 0$.

Now we prove (\ref{mainlsresult}) for general {$\eta>0 $}. Recall the
\emph{deviance} function from (\ref{eqndeffm}), $Z_i$ from
(\ref{eqnZi}) and set%
%
\begin{equation}
\label{eqnZbar} [Z] = {1 \over
N} \sum
_{i=1}^N Z_i.
\end{equation}
Recall the set $\mathbf{B}(z)$ from (\ref{eqndefB}) and $\Gamma (z,K)$
from Lemma~\ref{lemexpnullset}.
%
%
\begin{lemma} \label{corDm} Fix $1\leq K\leq(\log N)^{-1}(N\eta
)^{1/2} $. Then, on the set $\Gamma(z, K)$, we have the bound
\[
\bigl| \mathcal D(m)\bigr| \leq{\bigl|[Z]\bigr|}+O \bigl(K^2\Psi^2 \bigr) +
\infty1_{\mathbf{B}(z)}.
\]
\end{lemma}

\begin{pf}
Using (\ref{eqnGii}), (\ref{eqngkkmest1}), (\ref{eqndtrGmG}) and the
definition of $m_c$, on the set $\Gamma(z, K)$, we obtain a~more
precise version of (\ref{fgll}),
%
%
\begin{eqnarray}
G_{ii}(z)^{-1} = m_c(z)^{-1} + z d \bigl[m_c(z) - m(z) \bigr]
-Z_i + O \bigl(K^2 \Psi^2 \bigr) +O \bigl(N^{-1} \bigr)\nonumber
\\
\eqntext{\mbox{in } { \mathbf{B}^c}\cap \Omega^c,}
\end{eqnarray}
where $\Omega:=\Omega(z, K)$. Then
%
%
\begin{eqnarray}
\label{eqncordmbd1} \quad G^{-1}_{ii} -m^{-1} &=& \mathcal
D(m) -Z_i + O \bigl(K^2 \Psi^2 \bigr) + O
\bigl(N^{-1} \bigr)\qquad\mbox{in } {\mathbf{B}^c}\cap
\Omega^c
\end{eqnarray}
and averaging over $i$ yields
\[
{1 \over N} \sum_{i=1}^N
\bigl(G^{-1}_{ii} -m^{-1} \bigr) = \mathcal D(m) -
[Z] + O \bigl(K^2\Psi^2 \bigr) +O \bigl(N^{-1}
\bigr)\qquad\mbox{in } {\mathbf{B}^c}\cap\Omega^c.
\]
It follows from the assumptions $K\ll(N\eta)^{1/2} { \leq}
O(\Psi^{-1})$ that $G_{ii}-m=o(1)$. Expanding the left-hand side and
using the facts that $\sum_{i}(G_{ii} - m) = 0$,%
%
\begin{eqnarray}
\sum_{i=1}^N \bigl(G^{-1}_{ii}
-m^{-1} \bigr) &=& \sum_{i=1}^N
{G_{ii} - m \over
G_{ii} m} = {1 \over m^3} \sum
_{i=1}^N (G_{ii} - m)^2 +
\sum_{i=1}^N O \biggl(
{(G_{ii} - m)^3 \over m^4} \biggr)\nonumber
\\
\eqntext{\mbox{in } {\mathbf{B}^c}\cap \Omega^c.}
\end{eqnarray}
%
Together with (\ref{eqngkkmest2}) and (\ref{eqexcep}), it follows that
%
%
\begin{equation}
\label{eqncordmbd2} {1 \over N} \sum_{i=1}^N
\bigl(G^{-1}_{ii} -m^{-1} \bigr) \leq C (K \Psi
)^2\qquad\mbox{in } {\mathbf{B}^c}\cap
\Omega^c.
\end{equation}
Now the lemma follows from (\ref{eqncordmbd1}) and (\ref{eqncordmbd2}).
\end{pf}


%
%
\begin{lemma}\label{lemfm} Let $K, L>0$ be two numbers such that
$\varphi^L\geq K^2(\log N)^4 $, and let $A$ be an event given by
%
%
\begin{equation}
\label{lY} A \subset\bigcap_{z\in S(L)}\Gamma(z, K)
\cap\bigcap_{z\in S(L), \eta=10(1+d)} \mathbf{B}^c(z).
\end{equation}
Suppose that, in $A$, we have the bound%
\[
\bigl|\mathcal D(m) (z)\bigr| \leq\delta(z)+{\bolds\infty} 1_{\mathbf{B}(z)
}\qquad \forall
z \in{ \mathbf{S}}(L),
\]
where $\delta\dvtx \mathbb{C} \mapsto\mathbb{R}_+$ is a~continuous
function, decreasing in $\Im z$ and $|\delta(z)| \leq(\log N)^{-8}$.
Then for some constant $C>0$, the bound
%
%
\begin{equation}
\label{eqnsylx} \bigl| m(z )-m_c(z) \bigr|=\Lambda(z) \leq C(\log N)
\frac{ \delta
(z)}{\sqrt{\kappa+\eta+\delta}}\qquad\forall z \in{ \mathbf{S}}(L)
\end{equation}
holds in $A$ and
%
%
\begin{equation}
\label{eqnsylx2} A\subset\bigcap_{z\in{ \mathbf{S}}(L)}
\mathbf{B}^c(z).
\end{equation}
\end{lemma}
%
%
\begin{remark}
Formula (\ref{lY}) says that if $\Im z=10(1+d)$, then
$A\subset\Omega(z,K)^c$; that is, $A$ is not in the bad sets of such
$z$, and (\ref{eqnsylx2}) implies that $A$~is not in the bad sets of
all $z \in{\mathbf{S}}(L)$. The difficulty in the proof is that our
hypothesis yields the bound $\mathcal D(m) \leq\delta(z)$ only in the
set $\mathbf{B}^c$, but we need to prove (\ref{eqnsylx}) for both
$\mathbf{B}$ and $\mathbf{B}^c$.
\end{remark}

\begin{pf*}{Proof of Lemma \ref{lemfm}} 
Let us first fix $E$ and define the set
\[
I_E = \biggl\{ \eta\dvtx \Lambda_o(E + i \widehat{
\eta}) + \Lambda_d(E+ i \widehat{\eta}) \leq{1 \over\log N}
\ \forall\widehat{\eta} \geq\eta, E +i\widehat{\eta} \in{ \mathbf{S}}(L)
\biggr\}.
\]
We first prove (\ref{eqnsylx}) for all $z = E + i \eta$ with $\eta \in
I_E$.
Define
\[
\eta_1 = \sup_{\eta\in I_E} \bigl\{\eta\dvtx \delta(E+i
\eta) \geq(\log N)^{-1} (\kappa+\eta) \bigr\}.
\]
Since $\delta$ is a~continuous decreasing function of $\eta$ by
assumption, $\delta(E+i\eta)\leq(\log N)^{-1} (\kappa+\eta_1)$ for
$\eta\geq\eta_1$. 
Let $m_1$ and $m_2$ be the two solutions of the equation $\mathcal D(m)
= \delta(z)$ as given in (\ref{eqnDmtwosolbdfake}). Note by assumption
we do have $\mathcal|D(m)| \leq\delta(z)$ for $z=E+\eta i$ and $\eta
\in I_E $, since we are in $\mathbf{B}^c(z)$. Then it can be easily
verified that
%
%
\begin{eqnarray}
\label{eqnm1m2} |m_1 - m_2| 
&\geq& C\sqrt{
\kappa+\eta},\qquad\eta\geq\eta_1
\nonumber\\[-8pt]\\[-8pt]
&\leq& C(\log N)\sqrt{\delta(z)}, \qquad\eta\leq\eta_1.\nonumber
\end{eqnarray}
The difficulty here is that we do not know which of the two solutions $
m_1,m_2$ is equal to $m$. However for $\eta= O(1)$, we claim that $m =
m_1$. For $\eta= O(1)$, $|m - m_c|=\Lambda\leq\Lambda_d \ll1$. Also,
a~direct calculation using
(\ref{eqnDmtwosolbdfake}) gives%
%
\begin{equation}
\label{eqnmmwdiff} |m_1 - m_c| = C {\delta(z) \over\sqrt{\kappa+
\eta}}
\ll {1 \over
\log N}.
\end{equation}
Since $|m_1 -m_2| \geq C\sqrt{\kappa+ \eta}$ for $\eta= O(1)$ [see
(\ref{eqnm1m2})], it immediately follows that $m = m_1$ for $\eta=
O(1)$. Furthermore, since the functions $m_1, m_2$ and $m$ are
continuous and since $m_1 \neq m_2$ for $\eta> \eta_1$, it follows that
$m = m_1$ for $\eta\geq\eta_1$.
Thus for $\eta\geq\eta_1$,%
\[
\bigl|m(z ) - m_c(z)\bigr| = \bigl|m_1(z) - m_c(z)\bigr|\leq C
\frac{\delta(z)}{\sqrt {\kappa+\eta}}\leq C \frac{\delta(z)}{\sqrt{\kappa+\eta+\delta}},
\]
where in the last step we have used $\delta\leq\kappa+\eta$.

For $ \eta\leq\eta_1$, we take advantage of the fact that the
difference $|m_1 -m_2|$ is the same order as the middle term of
(\ref{eqnmmwdiff}). Indeed, for $\eta\leq\eta_1$, if $m= m_2$ (say),
then using (\ref{eqnm1m2}),
\[
|m -m_c|\leq|m_2 - m_1| + |m_1
- m_c| \leq(\log N) \sqrt{\delta(z)}\leq C(\log N)
\frac{\delta(z)}{\sqrt {\kappa+\eta+\delta}}
\]
verifying (\ref{eqnsylx}) for $\eta\in I_E$.\vadjust{\goodbreak}

From the above computations for $\eta\sim1$, we know $I_E\neq
\varnothing$. Now we prove that $I_E$ is exactly the desired region,
that is, $[\varphi^LN^{-1}, 10(1+d)] $, and this will
verify~(\ref{eqnsylx2}). We argue by contradiction. Indeed, assume that
$I_E \neq[\varphi^LN^{-1}, 10(1+d)]$. Let $\eta_0=\inf I_E $. Then the
continuity assumption yields that
%
%
\begin{equation}
\label{eqnwyz} \Lambda_o(z_0)+\Lambda_d(z_0)=
(\log N)^{-1}, \qquad z_0=E+i\eta_0
\end{equation}
and thus $\Lambda(z_0) \leq\Lambda_d(z_0)\leq(\log N)^{-1}$. On the
other hand, from the calculations done above we deduce that
(\ref{eqnsylx})
holds for $\eta\in I_E$ and thus%
%
\begin{equation}
\label{99p} \Lambda(z_0)\leq(\log N)^{-3}.
\end{equation}
By definition,
\[
\bigl\{ \Lambda_o(z_0)+\Lambda_d(z_0)=
(\log N)^{-1} \bigr\} \cap\Gamma(z_0) \subset \bigl(
\Omega_o(z_0) \cup\Omega_d(z_0)
\bigr)^c
\]
and therefore
\[
\Lambda_o(z_0) + \max_{k}\bigl|G_{kk}(z_0)
- m(z_0)\bigr| \leq C K \Psi(z_0).
\]
From\vspace*{-1pt} the assumption $\varphi^L\ge K^2(\log N)^4$, we have
$\Psi(z_0) \leq\sqrt{{\Im m_c\over{N\eta}} +{\Lambda(z_0) \over N\eta}}
\ll K^{-1}(\log N)^{-2}$
which immediately implies that $\Lambda_o(z_0)+\max_{k}|G_{kk}(z_0)
-\allowbreak m(z_0)| \ll(\log N)^{-1} $. Using this estimate and (\ref{99p})
we deduce that
\[
\Lambda_o(z_0)+\Lambda_d(z_0)
\leq\Lambda_o(z_0)+\max_{k}\bigl|G_{kk}(z_0)
- m(z_0)\bigr| + \Lambda\ll\log N^{-1},
\]
which contradicts (\ref{eqnwyz}), and therefore (\ref{eqnsylx2}) is
verified. This completes the proof of the lemma.
\end{pf*}

Now we complete the proof of Theorem~\ref{thmdetailed}.

\begin{pf*}{Proof of Theorem~\ref{thmdetailed}}
From (\ref{eqnZibd}), Lemmas~\ref{lemexpnullset} and~\ref{corDm}, it
follows that for any $\zeta>0$, there exist constants $C_\zeta$,
$D_\zeta$ and $\widetilde C_\zeta$ that
\[
\bigl| \mathcal D(m) (z)\bigr| \leq\varphi^{\widetilde C_\zeta}\Psi+\infty 1_{\mathbf{B}(z)}
\qquad\forall{z\in\mathbf{S}(C_\zeta)} 
\]
holds on the event $A_\zeta$ given by
%
%
\begin{equation}
\label{245u} A_\zeta= \bigcap_{z\in\mathbf{S}(C_\zeta)}
\Gamma \bigl(z, \varphi^{D_\zeta} \bigr).
\end{equation}
%
Choosing a~larger $C_\zeta$, applying Lemma~\ref{lemfm} with
\[
A = A_\zeta\cap\bigcap_{z\in S(0), \eta=10(1+d)}
\mathbf{B}^c(z)
\]
and $\delta(z)= \varphi^{C_\zeta}(N\eta)^{-1/2}$, we obtain that
%
%
\begin{equation}
\label{ag240} \Lambda(z)\leq\varphi^{C_\zeta}(N\eta)^{-1/4}
\qquad\forall z\in\mathbf{S}(C_\zeta)
\end{equation}
holds in $A$.
Furthermore, (\ref{eqnsylx2}) implies that%
%
\begin{equation}
\label{eqnbcompref} A \subset\bigcap_{z\in\mathbf{S}(C_\zeta
)}
\mathbf{B}^c(z).
\end{equation}
This observation gives that $\Lambda(z) \leq\Lambda_d(z) =o(1)$ in $A$
and $\Psi\leq C(N\eta)^{-1/2} $ in~$A$. Now since both $A_\zeta$ and
$\bigcap_{z\in S(0), \eta=10(1+d)} \mathbf{B}^c(z)$ hold with $\zeta
$-high probability [proved, resp., in Lemma~\ref{lemexpnullset} and
(\ref{gpww5})] it follows that the event $A$ holds with $\zeta$-high
probability. Now from the observation (\ref{eqnbcompref}) we see that
$\Omega(z,\varphi^{C_\zeta})^c$ holds with $\zeta$-high probability.
Together with $\Psi\leq C(N\eta )^{-1/2} $ in $A$, we obtain
(\ref{mainlsresult}). This completes the proof of Theorem
\ref{thmdetailed}.
\end{pf*}

\section{Strong bound on $[Z]$} \label{seczlemma}
For proving Theorems~\ref{451} and~\ref{452}, the key input is the
following lemma which gives a~much stronger bound on $[Z]$. Throughout
this section, we will assume that $\lim_{N \to\infty}  d_N
\in(0,\infty) \setminus\{1\}$. The following is the main result of this
section:
%
%
\begin{lemma} \label{lemZlem}
Let $K, L>0$ be such that $\varphi^L\geq K^2(\log N)^4 $. Suppose for
some event
\[
\Xi\subset\bigcap_{z \in{ \mathbf{S}}(L)} \bigl( \Gamma(z, K)\cap
B^{c}(z) \bigr),
\]
we have
\[
\Lambda(z) \leq\widetilde\Lambda(z)\qquad\forall z \in{ \mathbf{S}}(L),
\]
where $\widetilde\Lambda(z)$ is some deterministic number and
$\mathbb{P}(\Xi
^c) \leq e^{-p (\log N)^2 }$ with 
%
%
\begin{equation}
\label{kk20} 1\ll p \ll \bigl(\log(N K) \bigr)^{-1}
\varphi^{
L/2}.
\end{equation}
Then there exists $\Xi'$ such that $\mathbb{P}( \Xi')
\geq1-\frac{1}2e^{-p }$, and for any $z \in{ \mathbf{S}}(L)$,
%
%
\begin{equation}
\label{32you} \bigl| [Z] \bigr| \leq C p ^5K^2 \widetilde
\Psi^2, \qquad\widetilde\Psi:=\sqrt{\frac{\Im m_c+\widetilde
\Lambda}{N\eta}}\qquad\mbox{in
}\Xi'.
\end{equation}
\end{lemma}

%
%
\begin{remark}
In the application of the above lemma in Section~\ref{secp45}, we will
set $p_N$ and $K=O(\varphi^{O(1)})$. This lemma is analogous to Lemma
5.2 in \cite{EYYgenwig} [with $p=O(1)$], Corollary 4.2 in
\cite{EYYrigid} and Lemma 4.1 in \cite{EKYY12}, which are used in the
contexts of Wigner matrices and sparse matrices. The basic idea is to
utilize the fact that the entries of Green's function are weakly
correlated. But in our work, we give a~simple, general lemma (Lemma
\ref{abstractZlemma}) on the cancellation of weakly coupled random
variables, which may not have the special structure of Green function,
and is thus useful in more general contexts. For instance, our lemma is
used for proving universality in non-Hermitian matrices in
\cite{BYY2012}.
\end{remark}

\subsection{Abstract decoupling lemma}
First, we are going to introduce the following abstract decoupling
lemma\footnote{This lemma is joint work with Prof. H.~T. Yau, and we
thank him for kindly allowing us to include it here.} which is similar
to Theorem 5.6 of \cite{EKYY11} and Lemma 4.1 of \cite{EYYrigid}.
However, our lemma as stated here is more general and focuses on weakly
coupled random variables and thus is independent of the structure of
the matrix ensemble. Due to this generality, it has been useful in
other contexts; for instance in \cite{BYY2012} where the authors used
it in the context of local circular law.

Let $\mathcal I$ be a~finite set which may depend on $N$ and
\[
\mathcal I_i \subset\mathcal I,\qquad1\leq i\leq N.
\]
Let $\{x_\alpha, \alpha\in\mathcal I \}$ be a~collection of independent
random variables and ${\mathcal Z}_1, \ldots,\break  {\mathcal Z}_N$ be random
variables which are functions of $\{x_\alpha, \alpha\in\mathcal I \} $.
Let $\mathbb{E}_i$ denote the expectation value operator with respect
to $\{x_\alpha, \alpha\in\mathcal I_i \}$. Define the commuting
projection operators%
\begin{eqnarray*}
Q_i &=& 1-\mathbb{E}_i,\qquad P_i =
\mathbb{E}_i, \qquad P_i^2 = P_i,
\\
Q_i^2 &=& Q_i,\qquad
[Q_i,P_j]=[P_i, P_j]=[Q_i,Q_j]=0
\end{eqnarray*}
and, for $A\subset\{1,2,\ldots,N\}$,
\[
Q_A:=\prod_{i\in A}Q_i,
\qquad P_A:=\prod_{i\in A}P_i.
\]
We use the notation
\[
[\mathcal Q {\mathcal Z}] = \frac{1}{N} \sum_{i = 1}^N
Q_i{\mathcal Z}_i.
\]

%
%
\begin{lemma}[(Abstract decoupling lemma)] \label{abstractZlemma}
Let $\Xi$ be an event and $p$ an even integer, which may depend on $N$.
Suppose the following assumptions hold with some constants $C_0$,
$c_0>0$:
\begin{longlist}[(iii)]
\item[(i)] \textup{(Bound on $Q_A \mathcal Z_i$ in $\Xi$)}. There exist
    deterministic positive numbers $\mathcal X<1$ and $\mathcal Y$
    such that for any set $A\subset\{1,2,\ldots, N\}$ with $i\in A$
    and $| A | \leq p$, $Q_{A}\mathcal Z_i$ in $\Xi$ can be written
    as the sum of two new random variables
%
%
\begin{equation}
\label{511} {\mathbf1}(\Xi) ( Q_{A}\mathcal Z_i )=
\mathcal Z_{i, A}+ {\mathbf1}(\Xi) Q_{A}{\mathbf1} \bigl(
\Xi^c \bigr) \widetilde{\mathcal Z}_{i, A}
\end{equation}
and
%
%
\begin{equation}
\label{511a} | \mathcal Z_{i, A} |\leq\mathcal Y\bigl (C_0
\mathcal X|A| \bigr)^{ |A|},\qquad| \widetilde{\mathcal Z}_{i,
A} |
\leq \mathcal Y N^{C_0|A|}.
\end{equation}

\item[(ii)] \textup{(Rough bound on $\mathcal Z_i$)}.
%
%
\begin{equation}
\label{ZUNC} \max_i | \mathcal Z_i | \leq
\mathcal Y N^{C_0}.
\end{equation}

\item[(iii)] \textup{($\Xi$ is a~high probability event)}. 
%
%
\begin{equation}
\label{513} \mathbb{P} \bigl[\Xi^c \bigr] \leq e^{-c_0(\log N)^{3/2} p }.
\end{equation}
\end{longlist}

Then, under assumptions \textup{(i)}, \textup{(ii)} and \textup{(iii)}
above, we have
%
%
\begin{equation}
\label{52} \mathbb{E}[\mathcal Q \mathcal Z] ^{p} \leq
(Cp)^{4p } \bigl[ \mathcal X^{2} + N^{-1}
\bigr]^{p}\mathcal Y^p
\end{equation}
for some $C>0$ and any sufficiently large $N
$.
\end{lemma}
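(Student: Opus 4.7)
The plan is a $p$-th moment estimate together with the standard combinatorial cancellation for products of conditionally centered random variables, made quantitative via assumption (i). First I would split
$$\mathbb{E}|[\mathcal{Q}\mathcal{Z}]|^p \;=\; \mathbb{E}\bigl[\mathbf{1}(\Xi)\,|[\mathcal{Q}\mathcal{Z}]|^p\bigr] + \mathbb{E}\bigl[\mathbf{1}(\Xi^c)\,|[\mathcal{Q}\mathcal{Z}]|^p\bigr].$$
The second term is bounded by (ii) and (iii) as $(\mathcal{Y}N^{C})^p\,e^{-c_0(\log N)^{3/2}p}$, which is negligible compared with the target bound for the allowed range of $p$. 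For the first term, I would expand
$$[\mathcal{Q}\mathcal{Z}]^p \;=\; N^{-p}\sum_{\mathbf{i}=(i_1,\dots,i_p)}\,\prod_{k=1}^p Q_{i_k}\mathcal{Z}_{i_k},$$
and for each tuple $\mathbf{i}$ let $S=S(\mathbf{i})=\{i_1,\dots,i_p\}$ be the set of distinct indices, with $|S|=r\le p$.

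To expose the cancellation, I would insert into each factor the resolution of identity $1=\prod_{j\in S\setminus\{i_k\}}(P_j+Q_j)$ and use $Q_{i_k}P_{i_k}=0$ to rewrite
$$Q_{i_k}\mathcal{Z}_{i_k} \;=\; \sum_{A_k:\, i_k\in A_k\subseteq S}\, Q_{A_k}\bigl(P_{S\setminus A_k}\mathcal{Z}_{i_k}\bigr).$$
The key decoupling observation is: if some $j\in\bigcup_k A_k$ belongs to exactly one $A_{k_0}$, then $\mathbb{E}\prod_k Q_{A_k}(P_{S\setminus A_k}\mathcal{Z}_{i_k})=0$. Indeed, for every $k\ne k_0$ the corresponding factor is independent of $\{x_\alpha:\alpha\in\mathcal{I}_j\}$ (because $P_j$ is part of $P_{S\setminus A_k}$), so conditioning on the complementary $\sigma$-algebra reduces the expectation to an application of $P_j$ on the $k_0$-factor; since the $k_0$-factor contains $Q_j$ and $P_jQ_j=0$, everything collapses to zero. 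Consequently, the only surviving configurations $\{A_k\}$ are those in which every element of $\bigcup_k A_k$ is used by at least two of the $A_k$, which forces $\sum_k|A_k|\ge 2|\bigcup_k A_k|$.

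To bound the surviving terms I would use assumption (i): substitute (\ref{511}) into $Q_{A_k}\mathcal{Z}_{i_k}$ and apply $P_{S\setminus A_k}$ (which commutes with $Q_{A_k}$ and shrinks sup-norms), so that on $\Xi$ the factor is controlled either by the deterministic piece $\mathcal{Y}(C_0\mathcal{X}|A_k|)^{|A_k|}$ or by a $\mathbf{1}(\Xi^c)$-carrying remainder whose large pointwise size is compensated by (iii). Writing $r=|\bigcup_k A_k|$, using the pairing constraint together with $\mathcal{X}<1$, and estimating the number of index tuples with $|S|=r$ by $N^r r^p$ and the number of set-systems $\{A_k\}$ by $2^{pr}\le 2^{p^2}$, one gets, after including the $N^{-p}$ prefactor, a bound of the shape
$$\mathcal{Y}^p\sum_{r=0}^p (Cp)^{4p}\,(N^{-1})^{p-r}\,\mathcal{X}^{2r}\;\le\;(Cp)^{4p}\bigl(\mathcal{X}^2+N^{-1}\bigr)^p\mathcal{Y}^p.$$
The hard part will be (a) keeping the combinatorial bookkeeping tight enough that the final exponent on $Cp$ does not degrade past $4p$, and (b) systematically propagating the remainder pieces $\mathbf{1}(\Xi)Q_A\mathbf{1}(\Xi^c)\widetilde{\mathcal{Z}}_{i,A}$ from (\ref{511}) through the $p$-fold product and absorbing their huge deterministic bound $\mathcal{Y}N^{C_0|A|}$ using the smallness of $\mathbb{P}(\Xi^c)$ from (\ref{513}).
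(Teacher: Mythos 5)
Your cancellation mechanism is the right one and coincides with the paper's: after expanding the $p$-th power, any configuration in which some index is hit by exactly one $Q$ has vanishing expectation, and the survivors satisfy $\sum_k\abs{A_k}\ge 2\abs{S}$, which is the source of the factor $\mathcal{X}^{2\abs{S}}$ that combines with the $N^{\abs{S}-p}$ from counting tuples to give $(\mathcal{X}^2+N^{-1})^p$. The genuine gap is in the decomposition used to expose this cancellation. The full resolution of identity $1=\prod_{j\in S\setminus\{i_k\}}(P_j+Q_j)$ produces up to $2^{p^2}$ set-systems $\{A_k\}$, and for surviving configurations the total weight $\sum_k\abs{A_k}$ can be as large as $p\abs{S}\sim p^2$. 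Since the lemma allows $p$ to depend on $N$ (the applications in Sections 7--8 take $p$ up to $\varphi^{O(1)}$ and even $N^{\epsilon}$), neither the count $2^{p^2}$ nor the factor $(C_0p)^{\sum_k\abs{A_k}}\le (C_0p)^{p^2}$ coming from assumption (i) can be absorbed into $(Cp)^{4p}=e^{4p\log (Cp)}$; your displayed final sum silently drops them. The problem is worse for the $\mathbf{1}(\Xi^c)$ remainders of \eqref{511}, whose rough bounds $\mathcal{Y}N^{C_0\abs{A_k}}$ multiply up to $N^{C_0p^2}$ across one configuration and overwhelm $\P(\Xi^c)^{1/2}\le e^{-c_0(\log N)^{3/2}p/2}$ as soon as $p\gg(\log N)^{1/2}$. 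Nor does the smallness of $\mathcal{X}$ rescue the configuration sum: the lemma assumes only $\mathcal{X}<1$, and even in the applications one has merely $\mathcal{X}p\lesssim(\log N)^{-1}$, which does not control $e^{C\mathcal{X}p^2\abs{S}}$ when $p=N^{\epsilon}$.

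The paper avoids all of this with the telescoping identity \eqref{inxy2}: each distinct index $j_i$ is inserted as a $Q$ into exactly one other factor (the one labelled $s_i$), so the expansion has only $(p+1)^p$ terms and the total $Q$-weight obeys $\sum_\alpha\abs{B_\alpha\cup\{j_\alpha\}}\le p+t\le 2p$; every combinatorial factor is then $p^{O(p)}$, and the $\Xi^c$ remainders are peeled off one factor at a time via \eqref{inxy}, so there are only $O(p)$ of them per term rather than $2^p$. You would need to replace your expansion by such an economical one, or impose a smallness condition on $\mathcal{X}$ relative to $p$ far stronger than the lemma assumes. A secondary issue: inserting $\mathbf{1}(\Xi)$ globally before expanding, as in your first display, destroys the exact identity $\E\prod_k Q_{A_k}P_{S\setminus A_k}\mathcal{Z}_{i_k}=0$, because the indicator is not measurable with respect to the $\sigma$-algebras that the $P_j$ integrate out. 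The truncation to $\Xi$ must be introduced factor by factor, after the cancellation has been exploited on the full expectation; this is exactly what the peculiar form of \eqref{511} is designed for and why the paper keeps the untruncated expectation until the last step.
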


The intuition behind Lemma~\ref{abstractZlemma} is the following. If
$Z_i$ are totally independent, that is, $Q_A Z_i=0$ if $\exists j\in A$
and $i\neq j$, we see that $\sum Z_i$ is less than $\sum|Z_i|$ by
a~factor $N^{-1/2}$. In this case $Z_i$ depends only on $\{x_\alpha,
\alpha \in\mathcal I_i \}$. For the general case considered in Theorem
\ref{abstractZlemma}, $Z_i$ also weakly depends on sets $\{x_\alpha,
\alpha \in\mathcal I_j \}$ for $i\ne j$. Here $ Q_j Z_i$ can be
considered as the set $\{x_\alpha, \alpha\in\mathcal I_j \}$ ``acting''
on $X_i$, and $ Q_kQ_j Z_i$ the action of $\{x_\alpha,
\alpha\in\mathcal I_k \}$ on the action of $\{x_\alpha,
\alpha\in\mathcal I_j \}$ on $X_i$, so on and so forth. This lemma
shows that if the ``action'' is hierarchical, then indeed $\sum Z_i$ is
much less than $\sum|Z_i|$ in the sense of (\ref{52}).

Before we give a~proof of Lemma~\ref{abstractZlemma}, we introduce a~trivial but useful identity%
%
\begin{equation}
\label{inxy} \prod_{i=1}^n(x_i+y_i)
=\sum_{s=1}^{n+1 } \Biggl[ \Biggl(\prod
_{i=1}^{s-1} x_i \Biggr)
y_{s} \Biggl(\prod_{i=s+1}^n(x_i+y_i)
\Biggr) \Biggr]
\end{equation}
with the convention that $\prod_{i\in\varnothing}=1$. It implies that
\[
\Biggl|\prod_{i=1}^n(x_i+y_i)-
\prod_{i=1}^n(x_i ) \Biggr|\leq
n \max_i |y_{i}| \Bigl(\max_i|x_i+y_i|+
\max_i|x_i | \Bigr).
\]
For any $1\leq k\leq n$, it follows from
$\prod_{i=1}^n(x_i+y_i)=(x_k+y_k)\prod_{i\neq k} (x_i+y_i)$ and formula
(\ref{inxy}) that
%
%
\begin{equation}
\label{inxy2} \qquad\prod_{i=1}^n(x_i+y_i)
=\sum_{s\neq k, s=1}^n(x_k+y_k)
\Biggl[ \Biggl(\prod_{ i\neq k, i=1}^{s-1}
x_i \Biggr)y_s \Biggl(\prod
_{i\neq k, i=s+1}^n(x_i+y_i) \Biggr)
\Biggr].
\end{equation}

\begin{pf*}{Proof of Lemma~\ref{abstractZlemma}}
First, by definition, we have
\[
\mathbb{E}[\mathcal Q \mathcal Z] ^{p} =  \frac{1 }{N^p} \sum
_{j_1, \ldots, j_p} \mathbb{E}\prod_{\alpha=1}^pQ_{j_\alpha}
\mathcal Z_{j_\alpha}.
\]
For fixed $j_1, \ldots, j_p$, let $T_\alpha=Q_{j_\alpha}\mathcal
Z_{j_\alpha} $. Now choosing $k=1$, $x_i =P_{j_1}T_i$ and
$y_i=Q_{j_1}T_i$ in (\ref{inxy2}) (noting that $x_i+y_i=T_i$), we have
\[
\prod_{\alpha=1}^p T_\alpha= \sum
_{s=2}^{p+1} T_1 \biggl[
\biggl(\prod_{ \alpha<s,
\alpha\neq1}P_{j_1} T_\alpha
\biggr) (Q_{j_1} T_s) \biggl(\prod
_{\alpha>s,
\alpha\neq1} T_\alpha \biggr) \biggr].
\]
We define $A_{\alpha,s}:={\mathbf1 }_{\{\alpha< s, \alpha\neq1\}
}\{j_1\}$ and $B_{\alpha, s}:={\mathbf1}_{\alpha=s}\{j_1\}$; thus
$B_{\alpha, s}=\{j_1\}$ if $\alpha=s$, otherwise $A_{\alpha,
s}=\varnothing$. It is clear that $A_{1,s}=
B_{1,s}=\varnothing$. Then%
\[
\prod_{\alpha=1}^p T_\alpha=\sum
_{s=2}^{p+1}\prod
_{\alpha}P_{A_{\alpha,s}}Q_{B_{\alpha,s}}T_\alpha.
\]
Generalizing, we replace $s$ with $s_1$ to obtain%
\[
\prod_{\alpha=1}^p T_\alpha=\sum
_{s_1=1}^{p+1}{\mathbf1}(s_1
\neq1) \prod_{\alpha}P_{A_{\alpha,s_1}}Q_{B_{\alpha,s_1 }}T_\alpha
\]
and
%
%
\begin{equation}
\label{ABals} A_{\alpha,s_1}=\{j_1\dvtx \alpha<s_1,
\alpha \neq1\}, \qquad B_{\alpha,s_1}=\{j_1\dvtx
s_1=\alpha\}.
\end{equation}
Iterating
for $1\leq j_1, j_2,\ldots, j_p\leq N$, we have%
\[
\prod_{\alpha=1}^p T_\alpha=\sum
_{s_1, s_2,\ldots, s_p=1}^{p+1} \prod
_i{\mathbf1}(s_i\neq i) \prod
_{\alpha} P_{A_{\alpha,{\mathbf s}}}Q_{B_{\alpha,{\mathbf
s}}}T_\alpha,
\]
where $\mathbf s$ denotes $s_1, s_2,\ldots, s_p$ and $A _{\alpha,
{\mathbf s}}$, and $B _{\alpha, {\mathbf s}}$ are defined as
\[
A _{\alpha, {\mathbf s}}=\{j_i\dvtx \alpha<s_i, \alpha\neq i\},\qquad B_{\alpha, {\mathbf s}} =\{j_i\dvtx s_i=\alpha\}.
\]
Then it follows that
\[
\Biggl| \mathbb{E}\prod_{\alpha=1}^pQ_{j_\alpha}
\mathcal Z_{j_\alpha} \Biggr| \leq(2p)^p \max
_{\mathbf s} \prod_i{
\mathbf1}(s_i\neq i) \biggl| \mathbb{E}\prod
_{\alpha}P_{A_{\alpha, {\mathbf
s}}}Q_{B_{\alpha, {\mathbf s}}}T_{ \alpha}
\biggr|.
\]
Now to prove (\ref{52}), it remains only to show that for any $\{ j_1,
\ldots, j_p \}$ and ${\mathbf s}=\{s_1, s_2,\ldots, s_p\}$ such that $
s_i\neq i $, we have
%
%
\begin{equation}
\label{52newf} 
\biggl|\mathbb{E}\prod_{\alpha}P_{A_{\alpha, {\mathbf
s}}}Q_{B_{\alpha, {\mathbf s}}}T_{\alpha}
\biggr|\leq(Cp)^{2p}\mathcal Y^p\mathcal X^{2t},
\qquad t\dvtx = \bigl| \{ j_1, \ldots, j_p \} \bigr|.
\end{equation}
For simplicity, we denote $A_{\alpha, {\mathbf s}}$ and $B_{\alpha,
{\mathbf s}}$ by $A_\alpha$ and $B_\alpha$ and denote the
characteristic function ${\mathbf1}(\Xi)$ by $\Xi$. Thus we need to
show that
%
%
\begin{equation}
\label{52new} 
\biggl|\mathbb{E}\prod_{\alpha}P_{A_{\alpha}}Q_{B_{\alpha
}}T_{\alpha}
\biggr|\leq(Cp)^{2p}\mathcal Y^p\mathcal X^{2t},
\qquad t\dvtx = \bigl| \{ j_1, \ldots, j_p \} \bigr|.
\end{equation}
Since $T_{1}=Q_{j_1}T_1$ and the operators $P_{A_{\alpha}}$ and
$Q_{B_{\alpha}}$ commute, we have
%
%
\begin{equation}
\label{3} \mathbb{E}\prod_{\alpha}P_{A_{\alpha}}Q_{B_{\alpha
}}T_{\alpha}
= \mathbb{E}(Q_{j_1} P_{A_1 } Q_{B_1 }
T_{1} ) \Biggl( \prod_{\alpha=2}^p
(P_{A_\alpha} Q_{B_\alpha}) T_{\alpha} \Biggr).
\end{equation}

Hence we can assume that $j_1 \notin\bigcap_{\alpha\neq1}A_\alpha $,
and so $1<s_1\leq p$ [see (\ref{ABals})], $j_1 \in\bigcup_{\alpha\neq
1}B_\alpha$. Similarly for $j_i$, we have $j_i \in\bigcup_{\alpha\neq
i}B_\alpha $ where $ i = 2, \ldots, p$. Recall that $j_\alpha\notin
B_\alpha$. With these two constraints, $B_\alpha$ satisfies the
inequality
%
%
\begin{equation}
\label{1} p+t \geq\sum_\alpha\bigl|B_\alpha
\cup\{j_\alpha\} \bigr| \ge2 t, \qquad t\dvtx = \bigl| \{ j_1,
\ldots, j_p \} \bigr|.
\end{equation}
Now it remains only to prove (\ref{52new}) under condition
(\ref{1}). First, we write
\[
\mathbb{E}\prod_{\alpha}P_{A_\alpha}Q_{B_\alpha}T_{\alpha}
= \mathbb{E}\prod_{\alpha=1}^p
(P_{A_\alpha
} Q_{\widetilde B_\alpha} \mathcal Z_{j_\alpha}),\qquad\widetilde
B_\alpha:= B_\alpha\cup\{j_\alpha\}.
\]
Using (\ref{inxy}) with $x=P \Xi Q\mathcal Z $ and $y= P\Xi^c Q\mathcal
Z $ ($x+y=P Q\mathcal Z $ ), we have
%
%
\begin{eqnarray}
\label{yr2}
&& \mathbb{E}\prod_{\alpha=1}^p
(P_{A_\alpha} Q_{\widetilde B_\alpha} \mathcal Z_{j_\alpha})
\nonumber\\[-8pt]\\[-8pt]
&&\qquad  =\sum_{s=1}^{p+1} \Biggl(\mathbb{E}\prod
_{i=1}^{s-1} \bigl(P_{A_i } (\Xi)
Q_{\widetilde B_i } \mathcal Z_{j_i} \bigr) \bigl(P_{A_s }
\bigl(\Xi^c \bigr)Q_{\widetilde B_s }\mathcal Z_{j_s } \bigr)
\prod_{i=s+1}^{p} (P_{A_i }
Q_{\widetilde B_i } \mathcal Z_{j_i }) \Biggr).\hspace*{-30pt}\nonumber
\end{eqnarray}
First for $s\leq p$, we use the following formula. For any bounded
functions $f$ and $h$,%
%
\begin{equation}
\label{eqnformCSineq} \mathbb{E}\bigl| h \bigl( P \Xi^c Q f \bigr) \bigr| \le\| h
\|_\infty\bigl\| \bigl( \Xi^c Q f \bigr) \bigr\|_2 \le
\sqrt{ \mathbb{P} \bigl( \Xi^c \bigr) } \| f \|_\infty\| h
\|_\infty.
\end{equation}
Let%
\[
h=\prod_{i=1}^{s-1} \bigl(P_{A_i }
(\Xi) Q_{\widetilde B_i } \mathcal Z_{j_i} \bigr) \prod
_{i=s+1}^{p} (P_{A_i } Q_{\widetilde B_i }
\mathcal Z_{j_i }), \qquad f=\mathcal Z_{j_s},
P=P_{A_s}, Q=Q_{\widetilde B_s}.
\]
%
By (\ref{ZUNC}) and $p\geq1$, we have
\[
|h|\leq\mathcal Y^{p-1}N^{Cp},\qquad|f|\leq\mathcal Y
N^{C }.
\]
Then with (\ref{513}), we have proved that [see (\ref{yr2})]
\begin{eqnarray*}
&& \sum_{s=1}^p \Biggl(\mathbb{E}\prod
_{i=1}^{s-1} \bigl(P_{A_i } (\Xi)
Q_{\widetilde B_i } \mathcal Z_{j_i} \bigr) \bigl(P_{A_s }
\bigl(\Xi^c \bigr)Q_{\widetilde B_s }\mathcal Z_{j_s } \bigr)
\prod_{i=s+1}^{p} (P_{A_i }
Q_{\widetilde B_i } \mathcal Z_{j_i }) \Biggr)
\\
&&\qquad  \leq\mathcal Y^p N^{Cp}\exp \bigl[ - c(\log N)^{3/2}p
\bigr].
\end{eqnarray*}
Thus the contribution from the above term can be neglected in proving
(\ref{52new}). It remains only to bound the RHS of (\ref{yr2}) in the
case $s=p+1$; that is, we need to show that
%
%
\begin{equation}
\label{l412n} \quad \Biggl|\mathbb{E}\prod_{\alpha=1}^p
(P_{A_\alpha
} \Xi Q_{\widetilde B_\alpha} \mathcal Z_{j_\alpha}) \Biggr|
\leq(Cp)^{2p}\mathcal Y^p\mathcal X^{2t},
\qquad t\dvtx = \bigl| \{ j_1, \ldots, j_p \} \bigr|
\end{equation}
under assumption (\ref{1}). Using (\ref{511}) and (\ref{inxy}), with
$x=P\Xi\mathcal Z$ and $y=P\Xi Q \Xi^c\widetilde{\mathcal Z}$ we can
write the LHS of (\ref{l412n}) as
%
%
\begin{eqnarray}
\label{fn418} &&\mathbb{E}\prod_{\alpha=1}^p
(P_{A_\alpha} \Xi Q_{\widetilde B_\alpha} \mathcal Z_{j_\alpha})\nonumber
\\
&&\qquad=\sum_{s=1}^{p+1}
\Biggl(\mathbb{E} \prod_{i=1}^{s-1}
\bigl(P_{A_i } ( \Xi) \mathcal Z_{j_i, \widetilde B_i} \bigr)
\bigl(P_{A_s } (\Xi )Q_{\widetilde B_s } \bigl(\Xi^c \bigr)
\widetilde{\mathcal Z}_{j_s,
\widetilde B_s
} \bigr)
\\
&&\hspace*{152pt}
{}\times \prod_{i=s+1}^{p}(P_{A_i } \Xi Q_{\widetilde B_i } \mathcal Z_{j_i }) \Biggr).\nonumber
\end{eqnarray}
Now we repeat the argument for (\ref{yr2}). For $s\leq p$, one can use
the following formula which is similar to (\ref{eqnformCSineq}). For
any bounded function $f$ and $h$%
\[
\mathbb{E}\bigl| h \bigl( P \Xi Q \Xi^c f \bigr) \bigr| \le\| h
\|_\infty \bigl\| \bigl( \Xi^c f \bigr) \bigr\|_2 \le\sqrt{
\mathbb{P} \bigl( \Xi^c \bigr) } \| f \|_\infty\| h
\|_\infty.
\]
Let%
%
\begin{eqnarray}
h= \prod_{i=1}^{s-1} \bigl(P_{A_i } (\Xi) \mathcal Z_{j_i,
\widetilde B_i} \bigr) \prod_{i=s+1}^{p} (P_{A_i } \Xi Q_{\widetilde
B_i } \mathcal Z_{j_i } ),\nonumber
\\
\eqntext{f= \widetilde{\mathcal Z}_{ j_s,
{\widetilde B_s}},  P=P_{A_s}, Q=Q_{\widetilde B_s}.}
\end{eqnarray}
With the assumptions in (\ref{511a}) and (\ref{1}), we know the sum
over $1\leq s \leq p$ of RHS of (\ref{fn418}) is bounded above by
\[
Y^p N^{Cp}\exp \bigl[ - c(\log N)^{3/2}p
\bigr],
\]
which can be neglected in proving (\ref{l412n}). For the main term,
with $s=p+1$ on the RHS of (\ref{fn418}), using (\ref{511a}) and
(\ref{1}), we have
\[
\mathbb{E}\prod_{\alpha=1}^p
(P_{A_\alpha} \Xi \mathcal Z_{j_\alpha, \widetilde B_\alpha
})\leq(C\mathcal
Y)^p(C_0 \mathcal Xp)^{2t}\leq \bigl(C
\mathcal Yp^2 \bigr)^p \mathcal X^{2t}
\]
and this completes the proof of Lemma~\ref{abstractZlemma}.
\end{pf*}

\subsection{A stronger bound on $[Z]$} In this section we are going
to apply Lemma~\ref{abstractZlemma} to prove a~stronger bound on $[Z]$.
We note that using (\ref{eqnGii}) and (\ref{eqnZi}), {$Z$} can be
written as%
%
\begin{equation}
\label{Z1} Z_i = Q_i \biggl[ \frac{-1}{ G_{ii}}
\biggr], \qquad Q_i:=1-P_i,\qquad P_i:=
\mathbb{E}_{{\mathbf x}_i}.
\end{equation}
%

\begin{lemma}\label{lemaZ2}
Let $\mathcal Z_i=(G_{ii})^{-1}$, $P_i$ and $Q_i$ defined as in
(\ref{Z1}). We assume that $\eta=\Im z\geq N^{-C}$ for some $C>0$.
Suppose there exists an even integer $p$ and an event $\Xi$, such that
$\mathbb{P}(\Xi^c)\leq e^{-p (\log N)^{3/2}} $, and in $\Xi$,
%
%
\begin{eqnarray}
\label{yatt}
\max_i| Q_i\mathcal Z_i| &\leq& C \mathcal Y\mathcal X,\qquad\frac{\Lambda_o(z)}{ \min_i|G_{ii}(z)|}\leq C
\mathcal X\ll1,
\nonumber\\[-8pt]\\[-8pt]
\min_i\bigl|G_{ii}(z)\bigr|&\geq& \mathcal Y^{-1},\qquad p \leq\frac{C}{(\log N)\mathcal X},\nonumber
\end{eqnarray}
where $\mathcal X\ll1$ and $\mathcal Y$ are deterministic numbers. Then
there exists $\Xi' $ with $\mathbb{P}((\Xi')^c)\leq e^{ -p } $
and in $\Xi'$,%
%
\begin{equation}
\label{nnz} \biggl|\frac{1}{N} \sum_iQ_i
\mathcal Z_i \biggr|\leq Cp ^{5 } \bigl(\mathcal
X^2+N^{-1} \bigr) \mathcal Y.
\end{equation}
\end{lemma}
\begin{pf} 
We are going to apply Lemma~\ref{abstractZlemma}. The claim given in
(\ref{nnz}) will follow from (\ref{52}) and Markov's inequality. Using
the hypothesis, one can easily verify (\ref{ZUNC}) and (\ref{513}) in
the hypotheses of Lemma~\ref{abstractZlemma}. It remains only to show
that for $i\in A\subset\{1,2,\ldots, N\}$ and $|A|\leq p$, there exist
$\mathcal Z_{i,A} $ and $ \widetilde{\mathcal Z}_{i,A} $ such that
%
%
\begin{eqnarray}
\label{511new} {\mathbf1}(\Xi) ( Q_{A}\mathcal Z_i )=
\mathcal Z_{i,A}+ {\mathbf1}(\Xi) Q_{A} \bigl(
\Xi^c \bigr)\widetilde{\mathcal Z}_{i,A},
\nonumber\\[-8pt]\\[-8pt]
\eqntext{\mathcal Z_{i,A} \leq\mathcal Y \bigl(C \mathcal X|A| \bigr)^{ |A|}, \widetilde{
\mathcal Z}_{i,A}\leq\mathcal Y N^{C|A|}}
\end{eqnarray}
for some $C>0$. By assumption, formula (\ref{511new}) holds when
$A=\{i\}$. Thus we assume that $|A|\geq2$. As in Lemma 5.1 in
\cite{EKYY11}, let $\mathcal A=\mathcal A(H)=\mathcal A(X^\dagger X )$
be a~function of $X^\dagger X $, and define
\[
(\mathcal A )^{S,U}:= \sum_{S\setminus
U\subset V\subset
S}(-1)^{|V|}
\mathcal A^{(V)}, \qquad A^{(V)}:= A \bigl(
\bigl(X^{(V)} \bigr)^\dagger \bigl(X^{(V)} \bigr)
\bigr)
\]
for any $S,U \subset\{1,2,\ldots,N\}$. Then we have
\[
\mathcal A=\sum_{U\subset S} (\mathcal A
)^{S,U}.
\]
By definition, $ (\mathcal A )^{S,U}$ 
is independent of the $j$th column of $X$ if $j\in S\setminus U$.
Therefore,%
\[
Q_S\mathcal A=Q_S (\mathcal A )^{S,S}.
\]
In our case,
\[
Q_A\mathcal Z_i=Q_iQ_{A \setminus\{i\}}
\mathcal Z_i =Q_{A} \biggl(\frac{1}{G_{ii}}
\biggr)^{ {A \setminus\{i\}},{A \setminus
\{i\}}}.
\]
Now we choose
\[
\mathcal Z_{i,A}:= {\mathbf1}(\Xi) Q_{A}\Xi \biggl(
\frac
{1}{G_{ii}} \biggr)^{ {A \setminus\{i\}},{A \setminus\{i\}}},\qquad \widetilde{\mathcal
Z}_{i,A}:= \biggl(\frac{1}{G_{ii}} \biggr)^{ {A \setminus\{
i\}},{A \setminus\{i\}}}.
\]
It is easy to prove the bound for $ \widetilde{\mathcal Z}_{i,A}$ in
(\ref{511new}) using its definition. For bounding~$ {\mathcal
Z}_{i,A}$, it remains only to prove that, for $2\leq|A|\leq p_N$,
%
%
\begin{equation}
\label{g3s} \biggl|{\mathbf1}(\Xi) \biggl(\frac{1}{G_{ii}}
\biggr)^{ {A/\{i\}},{A/\{
i\}}} \biggr|\leq\mathcal Y \bigl(C \mathcal X|A| \bigr)^{ |A|}.
\end{equation}
To prove this, we first show that for $|\mathbb T|\leq p$,
%
%
\begin{equation}
\label{yttg} \max_{i,j\notin\mathbb T } \bigl|G_{ij}^{(\mathbb T)}\bigr|
\leq C \max_{i,j } |G_{ij} |, \qquad\min
_{i\notin\mathbb T } \bigl|G_{ii}^{(\mathbb T)}\bigr| \geq c \min
_{i } |G_{ii} |
\end{equation}
with the constants $C,c$ independent of $N,i,j$. We start from
$|\mathbb T|=1$, that is, \mbox{$\mathbb T=\{k\}$}. First using
(\ref{eqnGijGijk}) and the hypotheses of this lemma, we have
\begin{eqnarray*}
(G_{ii})^{-1} &=&\frac
{-G_{ij}G_{ji}}{G_{ii}G_{jj}G_{ii}^{(j)}}+ \bigl(G^{(j)}_{ii}
\bigr)^{-1}= \bigl(1+O \bigl(\mathcal X^2 \bigr) \bigr)
\bigl(G^{(j)}_{ii} \bigr)^{-1},
\\
\bigl|G^{(k)}_{ij}\bigr| &=& \biggl| G_{ij}-
\frac{G_{ik}G_{kj}}{G_{kk}} \biggr|\leq\Lambda_o \bigl(1+O(\mathcal X) \bigr).
\end{eqnarray*}
It follows that
\[
\max_{i,j\neq k } \bigl|G_{ij}^{(k)}\bigr| \leq \bigl(1+O(
\mathcal X) \bigr) \max_{i,j } |G_{ij} |, \qquad\min
_{i\neq k } \bigl|G_{ii}^{(k)}\bigr| \geq \bigl(1-O(
\mathcal X) \bigr) \min_{i } |G_{ii} |.
\]
Then using induction on $| \mathbb T|$ and the assumption $\mathcal
Xp\ll1$, we obtain the desired result (\ref{yttg}).

Now we return to prove (\ref{g3s}) for the case $|A|=2$. If $i\neq j$,
using (\ref{eqnGijGijk}), (\ref{yttg}) and (\ref{yatt}), we have
\[
\biggl(\frac{1}{G_{ii}} \biggr)^{
{j},j}=(G_{ii})^{-1}-
\bigl(G^{(j)}_{ii} \bigr)^{-1}=\frac
{-G_{ij}G_{ji}}{G_{ii}G_{jj}G_{ii}^{(j)}}
\leq O \bigl(\mathcal Y\mathcal X^2 \bigr).
\]
The general case has been proved in Lemma 5.11 of \cite{EKYY11} (also
see below), which gives that
\[
\biggl(\frac{1}{G_{ii}} \biggr)^{ {A/\{i\}},{A/\{i\}}}\leq\bigl(C|A|\bigr)^{|A|}
\frac{
(\max_{i,j\notin\mathbb T, \mathbb T\subset{A/\{i\}}}
|G_{ij}^{(\mathbb T)}| )^{|A|}
} {
(\min_{j\notin\mathbb T, \mathbb T\subset{A/\{i\}}
}|G_{jj}^{(\mathbb T)}| )^{|A|+1}}.
\]
Together with (\ref{yttg}) and (\ref{yatt}), we obtain (\ref{g3s}) for
$|A| = 2$.

Finally we need to point out that the definition of $G_{ij}^{(V)}$
($ij\notin V$) in \cite{EKYY11} is different from the definition in our
paper, although they are equivalent. We have
\[
G ^{(V)}= \bigl( \bigl(X^{(V)} \bigr)^\dagger
\bigl(X^{(V)} \bigr)-z \bigr)^{-1}
\]
and \cite{EKYY11} has
\[
G ^{(V)}= \bigl( H^{(V)}-z \bigr)^{-1},
\]
where $H^{(V)}$ is the minor of $H$ obtained by removing all $i$th rows
and columns of $H$ indexed by $i \in V$. But one can see that if $H=X
^\dagger X$, then $H^{(V)}=(X^{(V)})^\dagger(X^{(V)})$. Thus we finish
the proof of Lemma~\ref{lemaZ2}.
\end{pf}
%
Finally we give the proof of the main result of this section.

\begin{pf*}{Proof of Lemma~\ref{lemZlem}}
It is a~special case of Lemma~\ref{lemaZ2} with $\mathcal
X=K\widetilde\Psi$ and $\mathcal Y=C$ for a~constant $C$ (possibly
large, but independent of $N$). First, the bound $\max_i| Q_i\mathcal
Z_i|\leq C\mathcal Y\mathcal X$ is proved in (\ref{eqnZibd}). By
assumption, if $\Xi\subset\bigcap_{z \in{ \mathbf{S}}(L)} ( \Gamma(z,
K)\cap \mathbf{B}^{c}(z) )$, then
\[
\Lambda_o, \Lambda_d \leq K\Psi\leq K\widetilde
\Psi=X\leq CK(N\eta)^{-1/2}\ll1
\]
in $\Xi$. Thus we obtain%
\[
\frac{\Lambda_o(z)}{ \min_i|G_{ii}(z)|}\leq C\mathcal X\ll1, \qquad \min_i\bigl|G_{ii}(z)\bigr|
\geq\mathcal Y^{-1}.
\]
Furthermore formula (\ref{kk20}) and $\eta\geq N^{-1}\varphi^L$ [since
$z\in S(L)$] imply that $p \leq C((\log N)\mathcal X)^{-1}$, and the
proof of Theorem~\ref{lemZlem} is finished.
\end{pf*}

\section{Strong Marcenko--Pastur law and rigidity of
eigenvalues}\label{secp45} In this section, our goal is to prove
Theorems~\ref{451} and~\ref{452}. Throughout this section, we will
assume that $\lim_{N \to\infty}  d_N \in(0,\infty) \setminus\{1\}$.

Let us first give a~brief sketch of the proof strategy for the main
technical estimate (\ref{Lambdafinal}). We will prove, by an induction
on the exponent $\tau$, that $\Lambda(z) \le(N\eta)^{-\tau}$ holds
modulo logarithmic factors with high probability. Notice that we have
already proved this statement for $\tau=1/4$ in Theorem
\ref{thmdetailed}. Lemma~\ref{lemfm} asserts that if this statement is
true for some $\tau$, then it also holds for $\frac{1+\tau}{2}$
assuming a~bound on~$[Z]$. Now, an application of Lemma~\ref{lemZlem}
will yield that the required bound for~$[Z]$ holds with high
probability. Repeating the induction step for $O(\log\log N)$ times, we
will obtain that $\tau$ is essentially one, implying Theorem~\ref{451}.
However, we must keep track of the increasing logarithmic factors and
the deteriorating probability estimates of the exceptional sets.
\subsection{Proof of Theorem \protect\texorpdfstring{\ref{451}}{3.1}} We start by establishing
(\ref{Lambdafinal}) and (\ref{Lambdaofinal}).
\begin{pf*}{Proof of (\ref{Lambdafinal}) and (\ref{Lambdaofinal})}
Without loss of generality, we assume $ \zeta\geq1$. Using Lemma
\ref{lemexpnullset} and Theorem~\ref{thmdetailed}, for any $ \zeta>0$,
there exists $C_\zeta$ such that
%
%
\begin{equation}
\label{xy33} \Xi_1\subset\bigcap_{z\in\mathbf{S}(C_\zeta
)}
\mathbf{B}^c(z)\cap\Gamma(z, C_\zeta)
\end{equation}
holds with $(\zeta+4)$-high probability. Then from Lemma~\ref{corDm},
we see that for $z\in\mathbf{S}(3C_\zeta)$,
%
%
\begin{equation}
\label{mll} \bigl|\mathcal D(m) (z)\bigr|\leq\varphi^{2C_\zeta}
\Psi^2+\bigl|[Z]\bigr|\qquad\mbox{in }\Xi_1.
\end{equation}
Let $\Lambda_1=1$, so that $\Lambda\leq\Lambda_1$ in $\Xi_1$.
Therefore, we can apply Lemma~\ref{lemZlem} with
\[
p =p_{1}=-\log \bigl[1-\mathbb{P}(\Xi_1) \bigr]/ (\log
N)^2.
\]
Without loss of generality, we can assume that $\mathbb{P}(\Xi_1)$ is
not too close to 1; otherwise, we can replace $\Xi_1$ by a~subset of
itself. It follows that
\[
p_{ 1}=C\varphi^{ \zeta+4}/(\log N)^2.
\]
We assume that $C_\zeta\geq6\zeta$ and therefore (\ref{kk20}) holds.
Then (\ref{32you}) gives that, for $z\in\mathbf{S}(3C_\zeta)$, there
exists $\Xi_2$ such that
\[
\Xi_2\subset\Xi_1, \qquad\mathbb{P}(\Xi_2
)=1-e^{-p_1}
\]
and
\[
\bigl| [Z] \bigr| \leq\varphi^{2C_\zeta+11\zeta} \Psi_1^2, \qquad
\Psi_1:=\sqrt{\frac{\Im m_W+\Lambda_1}{N\eta}}\qquad\mbox{in }\Xi_2.
\]
Since in $\Xi_2\subset\Xi_1$, by (\ref{mll}), $\Lambda\leq\Lambda_1$
and thus $\Psi\leq\Psi_1 $ in $\Xi_2$, and consequently
%
%
\begin{equation}
\label{cl33} \bigl|\mathcal D(m) (z)\bigr|\leq\varphi^{2C_\zeta+11}
\frac{\Im m_W+\Lambda_1}{N\eta}\qquad\mbox{in }\Xi_2.
\end{equation}
Then applying Lemma~\ref{lemfm}, (\ref{eqnsylx2}) shows that, for
$z\in\mathbf{S}(3C_\zeta)$,
\[
\Lambda(z)\leq\Lambda_2(z):= \varphi^{C_\zeta+6\zeta}
\Lambda_1^{1/2}(N\eta)^{-1/2}\qquad\mbox{in }
\Xi_2.
\]
Now the proof proceeds via iterating the above process. Indeed, by
choosing
\[
p_{ 2}=-\log \bigl[1-\mathbb{P}(\Xi_2) \bigr]/ (\log
N)^2=C\varphi^{
\zeta+4}/(\log N)^4
\]
we deduce that there exists $\Xi_3$ such that
\[
\Xi_3\subset\Xi_2, \qquad\mathbb{P}(
\Xi_3)=1-e^{-p_2}
\]
and, for $z\in\mathbf{S}(3C_\zeta)$,
\[
\Lambda(z)\leq\Lambda_3(z):= \varphi^{C_\zeta+6\zeta}
\Lambda_2^{1/2}(N\eta)^{-1/2} \leq
\varphi^{2C_\zeta+12\zeta}(N\eta)^{-3/4}\qquad\mbox{in }\Xi_3.
\]
We iterate this process $K$ times, $K:=\log\log N/(\log1.9)$. For
$k\leq K$, we infer that for some
\[
\Xi_{k}\subset\Xi_{k-1}, \qquad\mathbb{P}(
\Xi_k)=1-e^{-p_{k-1}},
\]
where
\[
p_{ k}=-\log \bigl[1-\mathbb{P}(\Xi_{k-1}) \bigr]/ (\log
N)^2=C\varphi^{ \zeta+4}/(\log N)^{2k}\geq
\varphi^{ \zeta}
\]
and, for $z\in\mathbf{S}(3C_\zeta)$,
%
%
\begin{eqnarray}
\Lambda(z)\leq\Lambda_{k+1}(z):= \varphi^{C_\zeta+6\zeta}
\Lambda_k^{1/2}(N\eta)^{-1/2}\leq \varphi^{2C_\zeta+12\zeta}
(N\eta)^{-1+(1/2)^{k }}\nonumber
\\[-4pt]
\eqntext{\mbox{in } \Xi_{k+1}.}
\end{eqnarray}
Note that%
\[
N^{(1/2)^{K }}\leq\varphi.
\]
Thus for $k=K$ and $z\in\mathbf{S}(3C_\zeta)$, the bound
%
%
\begin{equation}
\label{ycbb} \Lambda(z)\leq\Lambda_{k+1}(z) \leq
\varphi^{2C_\zeta+12\zeta} (N\eta)^{-1+(1/2)^{K }} \leq\varphi ^{2C_\zeta+12\zeta+1} (N
\eta)^{-1}
\end{equation}
holds with $\zeta$-high probability, and this completes the proof of
(\ref{Lambdafinal}). Furthermore, since $\Xi_{K+1}\subset\Xi_1$ with
(\ref{xy33}), we obtain (\ref{Lambdaofinal}).
\end{pf*}
Next we assume (\ref{443}) holds and prove (\ref{4444}) first.
\begin{pf*}{Proof of (\ref{4444})}
Using (\ref{Lambdaofinal}), we have for any $i$,
%
%
\begin{equation}
\label{tbca} \max_{\lambda_-/5\leq E\leq5\lambda_+} \Im G_{ii} \bigl(E+i
\varphi^{C_\zeta}N^{-1} \bigr)\leq C.
\end{equation}
By definition,
\[
\Im G_{ii}=\sum_\alpha
\frac{|\mathbf{v}_\alpha(i)|^2\eta
}{(\lambda_\alpha-E)^2+\eta^2}.
\]
Then choosing $E=\lambda_\alpha$ and $\eta=\varphi^{C_\zeta }N^{-1}$,
using (\ref{tbca}), we deduce that for any index~$\alpha$
\[
\bigl|\mathbf{v}_\alpha(i)\bigr|^2\leq\eta=\varphi^{C_\zeta}N^{-1},
\]
which implies (\ref{4444}). Here formula (\ref{443}) guarantees that
$\lambda_-/5\leq E\leq5\lambda_+ $.
\end{pf*}

Now to establish Theorem~\ref{451}, all that remains is the proof of
(\ref{443}) which we give below.

\begin{pf*}{Proof of (\ref{443})}
The proof proceeds via taking the following four steps:
\begin{itemize}
\item \textit{Step} 1. For any $\zeta>0$, there exists some
    $D_\zeta>0$
such that%
\[
\max\{\lambda_j\dvtx \lambda_j\leq5\lambda_+ \} \leq
\lambda_++N^{-2/3}\varphi^{4D_\zeta}
\]
and
\[
\min\{\lambda_j\dvtx \lambda_j \ge{ {
\mathbf1}_{d>1}} \lambda_-/5 \} \ge\lambda_- -N^{-2/3}
\varphi^{D_\zeta}
\]
hold with $\zeta$-high probability.
\item \textit{Step} 2. Recall $ {\mathfrak n}(E)$ and $n_c(E)$ from
    (\ref{deffn}) and (\ref{nsc}).
We will show that%
%
\begin{eqnarray}
\label{fixE1E2} \bigl| \bigl({\mathfrak n}(E_1)-{\mathfrak
n}(E_2) \bigr)- \bigl( n_c(E_1)-n_c(E_2)
\bigr) \bigr| \le\frac{C (\log N)\varphi
^{C_\zeta}}{N},
\nonumber\\[-8pt]\\[-8pt]
\eqntext{E_1, E_2\in[ {
\mathbf1_{d>1} } \lambda_-/4, 4\lambda_+ ],}
\end{eqnarray}
which implies that
%
%
\begin{equation}
\label{39ll} \# \bigl\{j\dvtx \lambda_j\notin[ {\mathbf
1_{d>1} }\lambda_-/5, 5\lambda_+ ] \bigr\}\leq\varphi^{C_\zeta}.
\end{equation}
We note that though we need only (\ref{39ll}) for (\ref{443}), but
(\ref{fixE1E2}) will be used later to prove Theorem~\ref{452}.
\item \textit{Step} 3. Next, using the above two steps we will show
    that $ \max_j\lambda_j \leq5\lambda_+ $, with $\zeta$-high
    probability. This step will imply (\ref{443}) in the case
    $d<1$.
\item \textit{Step} 4. Finally, we show that, for $d>1$, that is,
    $N>M$, we have $ \lambda_M\geq\lambda_-/5$,
with $\zeta$-high probability. 
\end{itemize}

Step 1 of proof of (\ref{443}). By repeating the iteration in the proof
of (\ref{443}) one more time, that is, replacing $\Lambda_1$ in
(\ref{cl33}) with $\Lambda_{k+1}$ in (\ref{ycbb}), we obtain
\[
\bigl|\mathcal D(m) (z)\bigr|\leq\varphi^{ C _\zeta}\frac{\Im
m_c+(1/N\eta) }{N\eta}
\]
for some large $C_\zeta$. From (\ref{eqnsylx}) again, we obtain that
for some $D_\zeta\geq1$
%
%
\begin{equation}
\label{3yc} \Lambda(z)\leq\varphi^{D_\zeta}\frac{ \delta}{\sqrt {\kappa+\eta+\delta
}},\qquad
\delta:= \biggl( \frac{\Im m_c}{N\eta} +\frac{1}{(N\eta
)^2} \biggr).
\end{equation}
For any $E$ such that $E\geq\lambda_++N^{-2/3}\varphi^{4D_\zeta}$, and
\[
\eta:=\varphi^{-D_\zeta}N^{- 1/2}\kappa^{ 1/4},\qquad
\kappa=E-\lambda_+
\]
(thus $\kappa\geq N^{-2/3}\varphi^{4D_\zeta} $), it is easy to check
that
%
%
\begin{equation}
\label{TT3} \kappa\gg\varphi^{D_\zeta}\eta,\qquad N\eta\sqrt \kappa\gg
\varphi^{D_\zeta},\qquad\frac{\sqrt\kappa}{N\eta^2}\gg1.
\end{equation}
Using (\ref{esmallfake}) and (\ref{TT3}), we have
%
%
\begin{equation}
\label{zmm} \Im m_c(z)=C\frac{\eta}{ \sqrt\kappa},
\end{equation}
which implies
\[
\delta\leq\frac{C}{N\sqrt\kappa}+(N\eta)^{-2}.
\]
Therefore, $\kappa\geq\delta$. Together with (\ref{3yc}) and
(\ref{TT3}), we have
\[
\Lambda(z)\leq C\varphi^{D\zeta} \biggl(\frac{\eta}{\kappa}+
\frac
{1}{N\eta\sqrt\kappa} \biggr) \frac{1}{N\eta}\ll\frac{1}{N\eta}.
\]
Combining (\ref{zmm}) and the last inequality of (\ref{TT3}) yields
\[
\Im m_c(z)\ll\frac{1}{N\eta}
\]
and therefore we can conclude that
\[
\Im m (z)\ll\frac{1}{N\eta}.
\]
Note that if $\Im m (z)< (2N\eta)^{-1}$ (recall $z=E+i\eta$), then the
number of the eigenvalues in the interval $[E-\eta, E+\eta]$ is zero,
which is implied by the
following observation:%
%
\begin{equation}
\label{qpn} \Im m(z) = \frac{1}N\sum_\alpha
\frac{\eta}{(\lambda_\alpha-E)^2+\eta^2} \geq\sum_{\alpha\dvtx
|\lambda_\alpha-E|\leq\eta}
\frac{1}{2N\eta}.
\end{equation}
Since $\Im m (z)\ll\frac{1}{N\eta}$ holds for any $E\geq\lambda
_++N^{-2/3}\varphi^{4D_\zeta}$, we have proved that for any $\zeta>0$,
there exists some $D_\zeta>0$ such that
\[
\max\{\lambda_j\dvtx \lambda_j\leq5\lambda_+ \} \leq
\lambda_++N^{-2/3}\varphi^{4D_\zeta}
\]
holds with $\zeta$-high probability. An analogous bound for the
smallest eigenvalue can be proved similarly.

Step 2 of proof of (\ref{443}). The proof is similar to that of Theorem
2.2 in \cite{EYYrigid}. The strategy is to translate the information on
the Stieltjes transform obtained in Theorem~\ref{451} to prove
(\ref{fixE1E2}) on the location of the eigenvalues.

In the following lemma, $A_1, A_2$ represent two numbers with $|A_{1} +
A_2|\leq O(1)$. For any $E_1, E_2 \in[A_1, A_2 ]$, and $\eta=N^{-1}$ we
define
\[
f(\lambda):= f_{E_1,E_2,\eta}(\lambda)
\]
to be the characteristic function of $[E_1, E_2]$ smoothed on scale
$\eta$, that is, $f\equiv1$ on $[E_1+\eta, E_2-\eta]$, $f\equiv0$ on
$\mathbb{R}\setminus[E_1, E_2 ]$ and $|f'|\le C\eta^{-1}$, $|f''|\le
C\eta^{-2}$.

%
%
\begin{lemma}\label{lmHS1new}
Let $\varrho^\Delta$ be a~signed measure on the real line and
$m^\Delta$ be the Stieltjes transform of $\varrho^\Delta$. Suppose for
some positive number $U$ (which may depend on $N$) we have
%
%
\begin{equation}
\bigl|m^\Delta(x+iy)\bigr|\le\frac{CU}{Ny} \qquad\mbox{for } y<1,  x
\in[A_1, A_2 ]. \label{trivv1new}
\end{equation}
Then
%
%
\begin{equation}
\biggl|\int_\mathbb{R}f_{E_1,E_2,\eta}(\lambda)
\varrho^\Delta(\lambda)\,\mathrm{d}\lambda \biggr|\le\frac
{CU|\log\eta|}{ N }.
\label{genHS1new}
\end{equation}
\end{lemma}

\begin{pf} 
For notational simplicity, we drop the $\Delta$ superscript in the
proof. Let $\chi(y)$ be a~smooth cutoff function with support in
$[-1,1]$, with $\chi(y)= 1$ for $ |y|\leq1/2$ and with bounded
derivatives. Using Helffer--Sjostrand functional calculus, we obtain
\[
f(\lambda)=\frac{1}{2\pi}\int_{\mathbb{R}^2}\frac{iyf''(x)\chi
(y)+i(f(x)+iyf'(x))\chi'(y)}{\lambda-x-iy}
\,\mathrm{d}x \,\mathrm{d}y.
\]
Since $f$ and $\chi$ are real,
\begin{eqnarray}
\nonumber
\qquad \biggl|\int f(\lambda)\varrho(\lambda)\,\mathrm{d}\lambda \biggr| &\leq& C
\int_{\mathbb{R}^2} \bigl( \bigl|f(x)\bigr| +|y| \bigl|f'(x)\bigr| \bigr) \bigl|
\chi'(y)\bigr| \bigl| m(x+iy)\bigr| \,\mathrm{d}x \,\mathrm{d}y
\nonumber
\\
&&{} +C \biggl|\int_{|y|\leq\eta}\int y f''(x)
\chi(y) \Im m(x+iy)\,\mathrm{d}x \,\mathrm{d}y \biggr|\label{intr2fe1}
\\
&&{}+C \biggl|\int_{|y|\geq\eta}\int_\mathbb{R}y
f''(x)\chi(y) \Im m(x+iy)\,\mathrm{d}x \,\mathrm{d}y\biggr|.
\nonumber
\end{eqnarray}
Using (\ref{trivv1new}), the first term can be estimated as
%
%
\begin{equation}
\label{zll} \int_{\mathbb{R}^2} \bigl( \bigl|f(x)\bigr| +|y|
\bigl|f'(x)\bigr| \bigr) \bigl|\chi'(y)\bigr| \bigl| m(x+iy)\bigr| \,\mathrm{d}x
\,\mathrm{d}y \le CU.
\end{equation}
For the second term on the RHS of (\ref{intr2fe1}), notice that from
(\ref{trivv1new}) it follows that, for any $0 < y \leq1$,
%
%
\begin{equation}
y \bigl|\Im m(x+iy)\bigr|\le CU. \label{ym}
\end{equation}
With $|f''|\leq C\eta^{-2}$ and
%
%
\begin{equation}
\operatorname{supp} f'(x)\subset\bigl\{|x-E_1|\leq\eta\bigr\}
\cup\bigl\{ |x-E_2|\leq\eta\bigr\}, \label{fpr}
\end{equation}
we get
\[
\biggl|\int_{|y|\leq\eta}\int y f''(x)
\chi(y) \Im m(x+iy)\,\mathrm{d}x \,\mathrm{d}y \biggr|\le CU.
\]
Now we integrate the third term in (\ref{intr2fe1}) by parts first in
$x$, then in $y$. Then we bound it in absolute value by
%
%
\begin{eqnarray}
\label{temp7501} \qquad\quad &&C\int_{\mathbb{R}}\eta\bigl|f'(x)\bigr| \bigl|\Re
m(x+i\eta)\bigr|\,\mathrm{d}x+ C\int_{\mathbb{R}^2}y \bigl|f'(x)
\chi'(y)\Re m(x+iy)\bigr| \,\mathrm{d}x \,\mathrm{d}y
\nonumber\\[-8pt]\\[-8pt]
&&\qquad{} +\frac{C}\eta\int_{\eta\le y\leq1}\int
_{\operatorname{supp} f'} \bigl|\Re m(x+iy)\bigr|\,\mathrm{d}x \,\mathrm{d}y.\nonumber
\end{eqnarray}
By using (\ref{trivv1new}) and (\ref{fpr}) in the first term,
(\ref{zll}) in the second and (\ref{trivv1new}) in the third, we have
\[
(\ref{temp7501})\leq CU+ CU\eta^{-1} \int
_{\operatorname{supp} f'}\,\mathrm{d}x \int_{\eta\le y\le1}
\frac{1}{yN} \,\mathrm{d}y \leq CU|\log\eta|.
\]
This completes the proof of Lemma~\ref{lmHS1new}.
\end{pf}

We will apply Lemma~\ref{lmHS1new} with $[A_1, A_2]\subset[
{\mathbf1_{d>1} }\lambda_-/4, 4\lambda_+ ] $ and the signed measure
$\varrho^{\Delta}$ equal to the difference of the empirical density and
the MP law,%
\[
\varrho^\Delta(\mathrm{d}\lambda)=\varrho(\mathrm{d}\lambda) -
\varrho_c(\lambda)\,\mathrm{d} \lambda,\qquad\varrho(\mathrm{d}
\lambda):=\frac{1}{N}\sum_i \delta(
\lambda_i-\lambda).
\]

Now we prove that (\ref{fixE1E2}) holds. By Theorem~\ref{451}, if $y\ge
y_0:= \varphi^{C_\zeta}/N $, the assumptions of Lemma~\ref{lmHS1new}
hold for the difference $m^\Delta=m-m_c$ and $U=\varphi^{C_\zeta}$. For
$y\le y_0 $, set $z=x+iy$, $z_0=x+iy_0$ and estimate
%
%
\begin{eqnarray}
\label{mmmsc}
&& \bigl| m(z)-m_c(z)\bigr|
\nonumber\\[-8pt]\\[-8pt]
&&\qquad \le\bigl| m(z_0)-m_c(z_0)\bigr|
+ \int_y^{y_0} \bigl| \partial_\eta
\bigl( m(x+i\eta)-m_c(x+i\eta) \bigr) \bigr|\,\mathrm{d}\eta.\nonumber
\end{eqnarray}
Note that%
\begin{eqnarray*}
\bigl|\partial_\eta m(x +i\eta)\bigr| &= & \biggl|\frac{1}{N}\sum
_j \partial_\eta G_{jj}(x+i\eta) \biggr|
\\
\nonumber
&\le& \frac{1}{N}\sum_{jk}
\bigl|G_{jk}(x+i\eta)\bigr|^2 = \frac{1}{N\eta}\sum
_j \Im G_{jj}(x+i\eta) = \frac{1}{\eta}\Im
m(x+i\eta)
\end{eqnarray*}
and similarly
\[
\bigl|\partial_\eta m_c(x+i\eta)\bigr| = \biggl|\int\frac{\varrho_c(s)}{(s-x-i\eta
)^2}
\,\mathrm{d}s \biggr| \le\int \frac{\varrho_c(s)}{|s-x-i\eta|^2}\,\mathrm{d}s = \frac{1}{\eta} \Im
m_c(x+i\eta).
\]
Now we use the fact that the functions $y\to y\Im m(x+iy)$ and $y\to
y\Im m_{W}(x+iy)$ are monotone increasing for any $y>0$ since both are
Stieltjes transforms of a~positive measure. Therefore the integral in
(\ref{mmmsc}) can be bounded by%
%
\begin{eqnarray}
\label{intbb}
&& \int_y^{y_0}
\frac{\mathrm{d}\eta}{\eta} \bigl[ \Im m(x+i\eta) + \Im m_{W}(x+i\eta) \bigr]
\nonumber\\[-8pt]\\[-8pt]
&&\qquad \le y_0 \bigl[ \Im m(z_0) + \Im m_{W}(z_0)
\bigr] \int_y^{y_0} \frac{\mathrm{d}\eta}{\eta^2}.\nonumber
\end{eqnarray}

By definition, $\Im m_c(x+iy_0) \leq|m_c(x+iy_0)| \le C$. By the choice
of $y_0$ and Theorem~\ref{451}, we have
%
%
\begin{equation}
\Im m(x+iy_0) \le\Im m_c(x+ i y_0) +
\frac{\varphi^{C_\zeta}} {N y_0} \le C \label{imest}
\end{equation}
with $\zeta$-high probability for any $\zeta>0$. Together with (\ref
{intbb}) and (\ref{mmmsc}), this proves that (\ref{trivv1new}) holds
for $y\le y_0 $ as well if $U$ is increased to $U=C\varphi^{C_\zeta}$.

The application of Lemma~\ref{lmHS1new} shows that, for any $\eta\ge
1/N$,%
%
\begin{equation}
\qquad \biggl|\int_\mathbb{R}f_{E_1,E_2,\eta}(\lambda)\varrho(
\lambda) \,\mathrm{d}\lambda-\int_\mathbb{R}f_{E_1,E_2,\eta
}(
\lambda) \varrho_c(\lambda)\,\mathrm{d}\lambda \biggr|\le
\frac{C (\log
N)\varphi^{C_\zeta}}{N}. \label{genHS2new}
\end{equation}
Using the fact $y\to y\Im m(x+iy)$ is monotone increasing for any
$y>0$, we now use (\ref{imest}) to deduce a~crude upper bound on the
empirical density. Indeed, for any interval $I:=[x-\eta, x+\eta]$, with
$\eta=1/N$, we have
%
%
\begin{equation}
\qquad {\mathfrak n}(x+\eta)- {\mathfrak n}(x-\eta) \le C\eta\Im m ( x+ i\eta)\le
Cy_0 \Im m ( x+ iy _0 ) \le\frac{C\varphi^{C_\zeta}}{N}.
\label{lemdensity}
\end{equation}
Formulas (\ref{genHS2new}) and (\ref{lemdensity}) yield (\ref{fixE1E2})
and we have achieved Step 2.

Step 3 of proof of (\ref{443}): now we prove $ \lambda_1\leq5\lambda_+
$ holds with $\zeta$-high probability. Note that there is nothing
special about the number $5$ and below we show that some large
$K$,%
\[
\lambda_1\leq K\lambda_+
\]
with $\zeta$-high probability. Let
%
%
\begin{equation}
\label{191tt} z=E+i\eta, \qquad E\geq K\lambda_+, \qquad\eta=
EN^{-2/3}.
\end{equation}
With (\ref{fixE1E2}) and choosing $E_1=\lambda_-$ and $E_2=K\lambda_+$,
we have proved that there are at least $\varphi^{O(1)}$ eigenvalues
larger than $K\lambda_+$. Then by
definition,%
%
\begin{equation}
\label{190} \Im m^{(\mathbb T)} \leq\frac{C\eta}{E^2}+
\frac{\varphi^{C_\zeta}}{N\eta}, \qquad\bigl|\Re m^{(\mathbb
T)}\bigr|\leq CE^{-1}+
\frac{\varphi^{C_\zeta}}{N\eta}\leq O \bigl(E^{-1} \bigr)
\end{equation}
for any index set $\mathbb T$ with $|\mathbb T|=O(1)$. Now using the
large deviation lemma, as in (\ref{227pp}) and (\ref{eqnZibd}), we have
%
%
\begin{equation}
\label{190cg}  |Z_i|\leq|E| \biggl(E^{-1}N^{-1/2}+
\frac{\varphi^{C_\zeta}}{N\eta} \biggr), \qquad \bigl\langle \mathbf{x}_i,
\mathcal{G}^{(i,j)} \mathbf{x}_j \bigr\rangle\leq
E^{-1}N^{-1/2}+ \frac{\varphi^{C_\zeta}}{N\eta}.\hspace*{-40pt}
\end{equation}
First we estimate $G_{ii}$, with (\ref{eqnGii}), (\ref{eqnZi}) and
(\ref{eqndtrGmG}),
\[
| G_{ii} |= \bigl|1 - z-d-z d m^{(i)}(z)-Z_i
\bigr|^{-1}
\]
and
%
%
\begin{equation}
\label{192tt} \tfrac{1}2 E^{-1}\leq| G_{ii}
| \leq2 E^{-1},
\end{equation}
where we used (\ref{190}), (\ref{190cg}), $\eta=EN^{-2/3} $ and the
fact $K$ is large enough. Similarly for $G_{ij}$, from (\ref{eqnGij})
and (\ref{227pp}) it follows that%
%
\begin{equation}
\label{190gj} |G_{ij}|\leq E^{-1} \biggl(
\frac{\varphi^{C_\zeta}}{N\eta
}+E^{-1}N^{-1/2} \biggr).
\end{equation}
%
Furthermore with (\ref{eqnGijGijk}) and (\ref{eqnGij}),
\[
\bigl|m^{(i)}-m\bigr|=\frac{1}{N} \biggl|\sum_j
\frac{G_{ji}G_{ij}}{G_{ii}} \biggr|\leq E^{-1} \biggl|\frac{\varphi^{C_\zeta}}{N\eta}+E^{-1}N^{-1/2}
\biggr| ^2.
\]
Using these bounds,
\[
G_{ii}= \frac{1}{ 1 - z-d-z d m }+ O \bigl(m^{(i)}-m \bigr)+
\frac{Z_i}{ (1 - z-d-z d
m )^2 }+ E^{-3}O \bigl(Z_i^2 \bigr)
\]
and
%
%
\begin{eqnarray}
\label{192}
m&=&\frac{1}N\sum_i
G_{ii}
\nonumber\\[-10pt]\\[-10pt]
&=& \frac{1}{ 1 - z-d-z d m }+O \bigl(E^{-1} \bigr) \biggl(
\frac
{\varphi^{C_\zeta}}{N\eta}+ E^{-1}N^{-1/2} \biggr)^2+O
\bigl(E^{-2} [Z] \bigr).\hspace*{-40pt}\nonumber
\end{eqnarray}
Since $ |\Re(1 - z-d-z d m)| \geq|\Im(1 - z-d-z d m)| $,
%
%
\begin{equation}
\label{193} \Im\frac{1}{1 - z-d-z d m }\leq CE^{-2} \eta+
\frac{1}2 \Im m(z).
\end{equation}
Together with (\ref{192}) and (\ref{190cg}), with $\zeta$-high
probability,
%
%
\begin{eqnarray}
\label{194}
\Im m(z) &\leq& CE^{-2}\eta+E^{-1} \biggl(
\frac{\varphi^{C_\zeta}}{N\eta}+E^{-1}N^{-1/2} \biggr)
\nonumber\\[-8pt]\\[-8pt]
&=& \biggl(
\frac{N\eta^2}{E^2 } +\frac{\eta N^{1/2}}{E^2} +\frac{\varphi
^{C_\zeta}}{E } \biggr)
\frac{1}{N\eta}.\nonumber
\end{eqnarray}
If $E\geq N^{\varepsilon}$ for some $ \varepsilon>0$, with $\zeta
$-high probability, we have
%
%
\begin{equation}
\label{195} \Im m\ll\frac{1}{N\eta}.
\end{equation}
From the observation made in (\ref{qpn}), it follows that there are no
eigenvalues in the interval $[E-\eta, E+\eta]$ with $\zeta$-high
probability, or equivalently there are no eigenvalues larger than
$N^{\varepsilon}$ with $\zeta$-high probability.

Now, it only remains to prove (\ref{195}) for $K\lambda_+\leq E\leq
N^\varepsilon$. Using the above result, $\max_j\lambda_j\leq
N^\varepsilon$, with $\zeta$-high probability we have
\[
|G_{ii}| \geq N^{-2\varepsilon}.
\]
Therefore, applying (\ref{nnz}) and (\ref{Z1}) with $\mathcal
X=N^\varepsilon(N^{-1/2}+ \frac{\varphi^{C_\zeta}}{N\eta} )$, $\mathcal
Y=N^{2\varepsilon}$ and $p=N^\varepsilon$ and by using (\ref{191tt}),
(\ref{190gj}), (\ref{190cg}),
(\ref{192tt}), we have%
\[
\bigl|[Z]\bigr|\leq N^{C\varepsilon} \biggl(N^{-1/2}+ \frac{\varphi^{C_\zeta
}}{N\eta}
\biggr)^2.
\]
Inserting this in (\ref{192}), with (\ref{190}), (\ref{193}), we obtain
that the conclusion (\ref{195}) with $\zeta$-high probability for
$K\lambda_+\leq E\leq N^\varepsilon$. Again using (\ref{qpn}), we
deduce that there are no eigenvalues located in the interval
$[K\lambda_+, N^\varepsilon] $ with $\zeta$-high probability. Thus we
have achieved Step 3.

Step 4 of proof of (\ref{443}). Now we prove the last component of the
proof for (\ref{443}), that is, in the case of $d>1$ and thus $N>M$, we
have $ \lambda_M\geq\lambda_-/5$. As remarked earlier, it remains only
to prove that for some large $K$, the following bound holds with $\zeta
$-high probability,
%
%
\begin{equation}
\label{ttt} \lambda_M\geq\lambda_-/K.
\end{equation}
%
Recall $\mathcal{G}=(XX^\dagger-z)^{-1}$. Let
%
%
\begin{equation}
\label{191ttn} z=E+i\eta, \qquad0\leq E\leq\lambda_-/K, \qquad \eta=
N^{-1/2-\varepsilon}
\end{equation}
for some small enough $\varepsilon>0$. Recall we have proved that among
$\lambda_i$, $i\leq M$, there are at least $\varphi^{O(1)}$ eigenvalues
less than $ \lambda_- $. Then for some
$C, c\geq0$%
%
\begin{equation}
\label{190n} \Im\frac{1}N \operatorname{Tr}\mathcal{G}(z)\leq C
\eta+\frac{\varphi^{C_\zeta}}{N\eta}, \qquad
c
\leq\Re
\frac{1}N \operatorname{Tr}\mathcal{G}(z) \leq C.
\end{equation}
In the above, the term $\frac{\varphi^{C_\zeta}}{N\eta}$ is contributed
by these $\varphi^{O(1)}$ eigenvalues. Using Cauchy's interlacing
theorem of eigenvalues, it is easy to see that (\ref{190n}) also holds
for $\mathcal{G}^{(\mathbb T)}$ for $|\mathbb T|=O(1)$. Using the large
deviation lemma, with $\zeta$-high probability,%
%
\begin{eqnarray}
\label{ooo}
|Z_i|&\leq& |z| \biggl( N^{-1/2}+
\frac{\varphi^{C_\zeta}}{N\eta} \biggr)\leq|z| N^{-1/2+2\varepsilon},
\nonumber\\[-8pt]\\[-8pt]
\bigl\langle \mathbf{x}_i, \mathcal{G}^{(i,j)}\mathbf{x}_j
\bigr\rangle &\leq&  N^{-1/2}+ \frac{\varphi
^{C_\zeta}}{N\eta}\leq N^{-1/2+2\varepsilon}.\nonumber
\end{eqnarray}
First using (\ref{eqnGii}), we obtain,%
%
\begin{equation}
\label{Gtb} G_{ii} = \biggl(- z -z d\frac{1}N
\operatorname{Tr} \mathcal{G}^{(i)}(z)-Z_i
\biggr)^{-1}.
\end{equation}
Then using (\ref{190n}) we deduce that with $\zeta$-high probability,%
%
\begin{equation}
\label{wgl} c|z|^{-1}\leq| G_{ii} |\leq
C|z|^{-1}.
\end{equation}
Similarly from (\ref{eqnGij}), it follows that with $\zeta$-high
probability,
%
%
\begin{equation}
\label{odo} |G_{ij}|\leq|z|^{-1}N^{-1/2+C\varepsilon}.
\end{equation}
We have%
\[
\operatorname{Tr}G^{(i)}(z)-\operatorname{Tr}\mathcal{G}^{(i)}(z)=
\frac{M-N+1 }{z} = \operatorname{Tr}G (z)-\operatorname{Tr}\mathcal{G}(z)+
\frac{1}z.
\]
Together with (\ref{Gtb}),
\[
G_{ii} = \biggl(- z -z d\frac{1}N\operatorname{Tr}
\mathcal{G}(z)-z d \biggl(m^{(i)}-m-\frac
{1}{Nz}
\biggr)-Z_i \biggr)^{-1}.
\]
Using the bound [see (\ref{190n})],
\[
c|z|\leq\biggl|- z -z d\frac{1}N\operatorname{Tr}\mathcal{G}(z)\biggr|\leq C|z|,
\]
equation (\ref{ooo}) and $|m^{(i)}-m|\leq(N\eta)^{-1}$, we take the
average of $G_{ii}$ and use Taylor expansion to obtain [similar to
(\ref{192})]
%
%
\begin{eqnarray}
\label{gbmm} m&=& \frac{1}{ 1 - z-d-z d m (z) }+\delta,\nonumber
\\
\delta&:=&|z|^{-1}O \biggl(\frac{1}N\sum
_i \bigl(m^{(i)}-m \bigr)-(Nz)^{-1}
\biggr)
\\
&&{} +|z|^{-2}O \bigl([Z] \bigr)+|z|^{-1}O
\bigl(N^{-1+C\varepsilon} \bigr)\nonumber
\end{eqnarray}
%
with $\zeta$-high probability. Similarly, by estimating the difference
$G_{ii}-G_{jj}$, we
have%
%
\begin{equation}
\label{gwjs} |G_{ii}-m|\leq|z|^{-1}N^{-1/2+C\varepsilon}
\end{equation}
with $\zeta$-high probability. First for the term $ m^{(i)}-m$ in
(\ref{gbmm}), using (\ref{eqnGijGijk}), (\ref{wgl}) and (\ref{gwjs}),
we have
\[
m^{(i)}-m=\frac{-1}{N} \sum_j
\frac{ G_{ji}G_{ij}}{G_{ii}} =\frac
{-1}{N}\frac{G^2_{ii}}{G_{ii}} =\frac{-1}{N}
\frac{G^2_{ii}}{m} +O \bigl(|z| N^{-3/2+C\varepsilon} \bigr) \bigl|
\bigl(G^2 \bigr)_{ii} \bigr|. 
\]
%
Averaging $ m^{(i)}-m$, we obtain that%
%
\begin{equation}
\label{dhdh} \frac{1}{N}\sum_i \bigl(
m^{(i)}-m \bigr) =\frac{-1 }{N^2}\frac{\operatorname{Tr}
[G^2] }{m}+ O \bigl(|z|
N^{-5/2+C\varepsilon} \bigr)\sum_i \bigl|
\bigl(G^2 \bigr)_{ii} \bigr|. 
\end{equation}
Since we have proved that there are at least $\varphi^{O(1)}$ nonzero
eigenvalues less than $ 0.9\lambda_- $, then under (\ref{191ttn}), with
$\zeta$-high probability
%
%
\begin{equation}
\label{jxf} \operatorname{Tr} \bigl[G^2 \bigr]=\sum
_{\alpha}\frac{1}{(\lambda_\alpha-z)^2}=\frac{N-M}{z^2}+O \bigl(\varphi
^{C_\zeta} \bigr)\eta^{-2}+O(N).
\end{equation}
These three terms come from zero eigenvalues, small eigenvalues (which
are less than $0.9\lambda_-$) and the eigenvalues in the interval
$[\lambda_-, \lambda_+]$, respectively. We denote the three terms
appearing on the RHS of (\ref{jxf}) as $T_0$, $T_s$ and $T_n$,
respectively. Similarly, we have [note that here $z \leq O(1)$
is small enough]
%
\begin{equation}
\label{jxf2}\qquad Nm=\operatorname{Tr}[G ] =\frac{N-M}{-z }+O \bigl(
\varphi^{C_\zeta} \bigr)\eta^{-1}+O(N) =\frac{N-M}{-z
}
\bigl(1+O(z) \bigr)
\end{equation}
with $\zeta$-high probability and%
\[
\bigl| \bigl(G^2 \bigr)_{ii} \bigr|\leq \biggl|\sum
_{\alpha}\frac{|u_\alpha
(i)|^2}{(\lambda_\alpha-z)^2} \biggr| \leq C\sum
_{\alpha\in T_0}\frac{|u_\alpha(i)|^2}{|z^2|} +C\sum_{\alpha\in T_s}
\frac{|u_\alpha(i)|^2}{\eta^2}+ C\sum_{\alpha\in T_n} \bigl|u_\alpha(i)\bigr|^2.
\]
The last bound implies that
\[
\sum_i \bigl| \bigl(G^2
\bigr)_{ii} \bigr|\leq C \frac{N
}{|z|^2}+O \bigl(
\varphi^{C_\zeta} \bigr)\eta^{-2}+O(N).
\]
Together
with (\ref{dhdh}), we have%
%
\begin{equation}
\label{tnf} \frac{1}{N}\sum_i \bigl(
m^{(i)}-m \bigr) =\frac{-1 }{N^2}\frac{\operatorname{Tr}
(G^2) }{m}+ O
\bigl(|z|^{-1} N^{-3/2+C\varepsilon} \bigr).
\end{equation}
Dividing (\ref{jxf}) by $Nm$ [see (\ref{jxf2})], for $|z|$ small
enough, we have
%
%
\begin{equation}
\label{tnf2} \frac{\operatorname{Tr}(G^2) }{Nm}= \frac
{-1}{z}+O \bigl( z
N^{ 2\varepsilon} \bigr) +O(1).
\end{equation}
Recall $\delta$ from (\ref{gbmm}). Now combining (\ref{tnf}) and
(\ref{tnf2}) with (\ref{gbmm}), we obtain
%
%
\begin{equation}
\label{nede} \delta\leq O \bigl( \bigl|z^{-2}\bigr|N^{-3/2+C\varepsilon
}+\bigl|z^{-1}\bigr|N^{-1+C\varepsilon}
\bigr)+|z|^{-2}O \bigl([Z] \bigr).
\end{equation}
Now we apply Lemma~\ref{lemaZ2} (with $\mathcal X=N^{-1/2+C\varepsilon
}$, $\mathcal Y=C|z |$ and $p=N^\varepsilon$) to estimate $[Z]$. Using
Lemma~\ref{lemaZ2}, (\ref{ooo}), (\ref{wgl}) and (\ref{odo}), we get
\[
|z|^{-2} \bigl|[Z]\bigr|\leq|z|^{-1} N^{-1+C\varepsilon}.
\]
Combining the above with (\ref{nede}) gives%
%
\begin{equation}
\label{llz} \delta\leq O \bigl( \bigl|z^{-2}\bigr|N^{-3/2+C\varepsilon
}+\bigl|z^{-1}\bigr|N^{-1+C\varepsilon}
\bigr).
\end{equation}
Using (\ref{gbmm}) and the definition of $m_c$,
\[
m-m_c= \frac{1}{ 1 - z-d-z d m (z) }-\frac{1}{ 1 - z-d-z d m_c(z)
}+\delta,
\]
which implies that
\[
\biggl( \frac{zd }{ (1 - z-d-z d m (z) )(1 - z-d-z d m_c
(z))}-1 \biggr) (m-m_c)=\delta.
\]
As above, we have $c|z|\leq|1 - z-d-z d m (z)|$, $|1 - z-d-z d
m_c(z)|\leq C|z|$ for all $|z| \leq\varepsilon_0$ for a constant
$\varepsilon_0$ independent of $N$. Therefore, we have
\[
|m-m_c| \leq|z \delta|.
\]
Using (\ref{llz}), we have%
\[
|m-m_c| \leq O \bigl( \bigl|z^{-1}\bigr|N^{-3/2+C\varepsilon
}+N^{-1+C\varepsilon
}
\bigr) \ll(N\eta)^{-1}.
\]
Furthermore, it is easy to prove
that%
\[
\Im \biggl(m_c-\frac{1-d^{-1}}{-z } \biggr)=O( \eta)\ll(N
\eta)^{-1}.
\]
Together with $\operatorname{Tr}G=\operatorname{Tr}
\mathcal{G}-z^{-1}(N-M)$, we obtain%
\[
\Im\operatorname{Tr}\mathcal{G}(z)\ll\frac{1}{\eta}
\]
with $\zeta$-high probability. As in (\ref{qpn}), we have $\lambda
_\alpha\notin[E-\eta, E+\eta]$ for $E\in[0, \lambda_-/K]$ with large
enough $K=O(1)$ obtaining (\ref{ttt}). This completes step 4 and we
have thus proved (\ref{443}).
\end{pf*}

Thus we have verified (\ref{Lambdafinal}), (\ref{Lambdaofinal}),
(\ref{443}) and (\ref{4444}) and have finished the proof of Theorem
\ref{451}.
\subsection{Proof of Theorem \texorpdfstring{\protect\ref{452}}{3.3}}
We confirm formulas (\ref{resrig2}) and (\ref{resrig}) separately.
\begin{pf*}{Proof of (\ref{resrig2})}
Recall (\ref{fixE1E2}) and the fact that there is no eigenvalue in $(
0, \lambda_-/4]\cup[4\lambda_+, +\infty]$. We deduce that
%
%
\begin{equation}
\max_{E\in\mathbb{R}} \bigl| {\mathfrak n}(E) - n_c(E) \bigr| \le
\frac{C (\log N)\varphi^{C_\zeta}}{N} \label{fixE}
\end{equation}
holds with $\zeta$-high probability. The supremum over $E $ is
a~standard argument for extremely small events and we omit the details.
\end{pf*}

Now we give the proof of (\ref{resrig}).

\begin{pf*}{Proof of (\ref{resrig})}
The proof is very similar to the one for generalized Wigner matrix
obtained in formula (2.25) of \cite{EYYrigid}. For the reader's sake,
we reproduce that argument below. By symmetry, we assume that $ 1\le j
\le N/2 $ and set $E = \gamma_j$, $E'= \lambda_j$. Also $t_N= (\log
N)\varphi^{C_\zeta}$ for compactness of notation. From
(\ref{fixE}) we have%
%
\begin{equation}
n_c(E) = \frak{n} \bigl(E' \bigr) = n_c
\bigl(E' \bigr) + O(t_N/N). \label{nnn}
\end{equation}
Clearly $E\geq\lambda_C:=( \lambda_++3\lambda_-)/4$, and using
(\ref{fixE}) we see that $E'\ge\lambda_C$ also holds with $\zeta $-high
probability. First, using (\ref{443}) and
%
%
\begin{equation}
\label{nscxs} n_c(x) \sim(\lambda_+-x)^{3/2}\qquad
\mbox{for } \lambda_C\leq x\leq\lambda_+,
\end{equation}
or equivalently,
\[
n_c(E)=n_c(\gamma_j) = \frac{j}{N}
\sim( \lambda_+-E)^{3/2},
\]
we know that (\ref{resrig}) holds (possibly with a~larger constant) if
\[
E,E'\geq\lambda_+-t_N N^{-2/3}.
\]
%

Hence, we can assume that one of $E$ and $E'$ is in the interval
$[\lambda_C,\lambda_+-t_N N^{-2/3}]$. With (\ref{nscxs}), this
assumption implies that at least one of $n_c(E)$ and $n_c(E')$ is
larger than $t_N^{3/2}/ N$. Inserting this information into
(\ref{nnn}), we obtain that both $n_c(E)$ and $n_c(E')$ are positive
and
\[
n_c(E) = n_c \bigl(E' \bigr) \bigl[ 1 + O
\bigl(t_N^{-1/2} \bigr) \bigr]
\]
and in particular, $\lambda_+-E\sim\lambda_+-E'$. Using the fact that
$n_c'(x)\sim(\lambda_+-x)^{1/2}$ for $\lambda_C\le x\le\lambda_+$, we
obtain that $n_c'(E) \sim n_c'(E')$, and in fact $n_c'(E)$ is
comparable with $n_c'(E'')$ for any $E''$ between $E$ and $E'$. Then
with Taylor's expansion, we have
%
%
\begin{equation}
\label{88} \bigl|n_c \bigl(E' \bigr)-n_c(E)\bigr|
\le C \bigl| n_c'(E)\bigr| \bigl|E'-E\bigr|.
\end{equation}
Since $n_c'(E) = \varrho_c(E) \sim\sqrt\kappa$ and $n_c(E) \sim
\kappa^{3/2}$, moreover, by $E=\gamma_j$ we also have $n_c(E) =j/N$, we
obtain from (\ref{nnn}) and (\ref{88}) that
\[
\bigl|E'-E\bigr| \le\frac{C| n_c(E')- n_c(E)|}{n_c'(E)}\le\frac{Ct_N} { N
n_c'(E) } \le
\frac{C t_N} { N (n_c(E))^{1/3} } \le\frac{C t_N} { N^{2/3} j^{1/3} },
\]
which proves (\ref{resrig}), again with a~larger constant.
\end{pf*}
We have proved (\ref{resrig}) and (\ref{resrig2}) and the proof of
Theorem~\ref{452} is complete.

\section*{Acknowledgments}
The authors would like to thank Horng-Tzer Yau, Antti Knowles, Lazlo
Erd\"os, Paul Bourgade and Alain Pajor for very useful discussions and
help. The authors also thank the editorial panel for useful feedback
which significantly improved the presentation. Special thanks to Steve
Finch for a~careful reading.



%

\printaddresses

\end{document}